\documentclass[12pt, a4paper]{amsart}
\pdfoutput=1 
\usepackage[dvips]{graphicx}
\usepackage{amssymb, amstext, amscd, amsmath, color}
\usepackage{epstopdf}
\DeclareGraphicsRule{.tif}{png}{.png}{`convert #1 `dirname #1`/`basename #1 .tif`.png}

\usepackage{tikz,mathdots,enumerate}
\usepackage{pgfplots}
\usepackage{url}

\usetikzlibrary{arrows}

\definecolor{cof}{RGB}{219,144,71}
\definecolor{pur}{RGB}{186,146,162}
\definecolor{greeo}{RGB}{91,173,69}
\definecolor{greet}{RGB}{52,111,72}

\usepackage{hhline}


\textwidth   16.1cm \textheight  22.3cm \topmargin  -0.4in
\oddsidemargin  -0.05in \evensidemargin  -0.05in

\begin{document}

\newtheorem{theorem}{Theorem}[section]
\newtheorem{corollary}[theorem]{Corollary}
\newtheorem{proposition}[theorem]{Proposition}
\newtheorem{lemma}[theorem]{Lemma}

\theoremstyle{definition}
\newtheorem{remark}[theorem]{Remark}
\newtheorem{definition}[theorem]{Definition}
\newtheorem{example}[theorem]{Example}
\newtheorem{conjecture}[theorem]{Conjecture}

\newcommand{\FFock}{\mathcal{F}}
\newcommand{\kil}{\mathsf{k}}
\newcommand{\Hil}{\mathsf{H}}
\newcommand{\hil}{\mathsf{h}}
\newcommand{\Kil}{\mathsf{K}}
\newcommand{\Real}{\mathbb{R}}
\newcommand{\Rplus}{\Real_+}

\newcommand{\bC}{{\mathbb{C}}}
\newcommand{\bD}{{\mathbb{D}}}
\newcommand{\bN}{{\mathbb{N}}}
\newcommand{\bQ}{{\mathbb{Q}}}
\newcommand{\bR}{{\mathbb{R}}}
\newcommand{\bT}{{\mathbb{T}}}
\newcommand{\bX}{{\mathbb{X}}}
\newcommand{\bZ}{{\mathbb{Z}}}
\newcommand{\bH}{{\mathbb{H}}}
\newcommand{\BH}{{\B(\H)}}
\newcommand{\bsl}{\setminus}
\newcommand{\ca}{\mathrm{C}^*}
\newcommand{\cstar}{\mathrm{C}^*}
\newcommand{\cenv}{\mathrm{C}^*_{\text{env}}}
\newcommand{\rip}{\rangle}
\newcommand{\ol}{\overline}
\newcommand{\td}{\widetilde}
\newcommand{\wh}{\widehat}
\newcommand{\sot}{\textsc{sot}}
\newcommand{\wot}{\textsc{wot}}
\newcommand{\wotclos}[1]{\ol{#1}^{\textsc{wot}}}
 \newcommand{\A}{{\mathcal{A}}}
 \newcommand{\B}{{\mathcal{B}}}
 \newcommand{\C}{{\mathcal{C}}}
 \newcommand{\D}{{\mathcal{D}}}
 \newcommand{\E}{{\mathcal{E}}}
 \newcommand{\F}{{\mathcal{F}}}
 \newcommand{\G}{{\mathcal{G}}}
\renewcommand{\H}{{\mathcal{H}}}
 \newcommand{\I}{{\mathcal{I}}}
 \newcommand{\J}{{\mathcal{J}}}
 \newcommand{\K}{{\mathcal{K}}}
\renewcommand{\L}{{\mathcal{L}}}
 \newcommand{\M}{{\mathcal{M}}}
 \newcommand{\N}{{\mathcal{N}}}
\renewcommand{\O}{{\mathcal{O}}}
\renewcommand{\P}{{\mathcal{P}}}
 \newcommand{\Q}{{\mathcal{Q}}}
 \newcommand{\R}{{\mathcal{R}}}
\renewcommand{\S}{{\mathcal{S}}}
 \newcommand{\T}{{\mathcal{T}}}
 \newcommand{\U}{{\mathcal{U}}}
 \newcommand{\V}{{\mathcal{V}}}
 \newcommand{\W}{{\mathcal{W}}}
 \newcommand{\X}{{\mathcal{X}}}
 \newcommand{\Y}{{\mathcal{Y}}}
 \newcommand{\Z}{{\mathcal{Z}}}

\newcommand{\supp}{\operatorname{supp}}
\newcommand{\conv}{\operatorname{conv}}
\newcommand{\cone}{\operatorname{cone}}
\newcommand{\vspan}{\operatorname{span}}
\newcommand{\proj}{\operatorname{proj}}
\newcommand{\sgn}{\operatorname{sgn}}
\newcommand{\rank}{\operatorname{rank}}
\newcommand{\Isom}{\operatorname{Isom}}
\newcommand{\qIsom}{\operatorname{q-Isom}}
\newcommand{\Cknet}{{\mathcal{C}_{\text{knet}}}}
\newcommand{\Ckag}{{\mathcal{C}_{\text{kag}}}}
\newcommand{\rind}{\operatorname{r-ind}}
\newcommand{\lind}{\operatorname{r-ind}}
\newcommand{\ind}{\operatorname{ind}}
\newcommand{\coker}{\operatorname{coker}}
\newcommand{\Aut}{\operatorname{Aut}}
\newcommand{\Hom}{\operatorname{Hom}}
\newcommand{\GL}{\operatorname{GL}}
\newcommand{\tr}{\operatorname{tr}}

\newcommand{\eqnwithbr}[2]{%
\refstepcounter{equation}
\begin{trivlist}
\item[]#1 \hfill $\displaystyle #2$ \hfill (\theequation)
\end{trivlist}}

\setcounter{tocdepth}{1}

\title[Symmetric isostatic frameworks with $\ell^1$ or $\ell^\infty$ distance constraints]{Symmetric isostatic frameworks with $\ell^1$ or $\ell^\infty$ distance constraints}

\author[D. Kitson and  B. Schulze]{Derek Kitson  and  Bernd Schulze}
 \thanks{The first named author is supported by EPSRC grant  EP/J008648/1.}
\thanks{The second named author is supported by EPSRC grant  EP/M013642/1.}
\email{d.kitson@lancaster.ac.uk, b.schulze@lancaster.ac.uk}
\address{Dept.\ Math.\ Stats.\\ Lancaster University\\
Lancaster LA1 4YF \\U.K. }

\subjclass[2010]{52C25,  05C70}
\keywords{tree packings,  spanning trees, bar-joint framework,  infinitesimal rigidity, symmetric framework, Minkowski geometry.}

\begin{abstract}
Combinatorial characterisations of minimal rigidity are obtained for  symmetric $2$-dimensional bar-joint frameworks with either $\ell^1$ or $\ell^\infty$ distance constraints. The characterisations are expressed in terms of symmetric tree packings and the number of edges fixed by the symmetry operations. The proof uses new Henneberg-type inductive construction schemes. 
\end{abstract}

\maketitle


\section{Introduction}
A fundamental problem in geometric rigidity theory is to find combinatorial characterisations of graphs which form rigid bar-joint frameworks for all generic realisations of the vertices in a given space. For the Euclidean plane, this problem was first solved by Laman's landmark result from 1970 (\cite{Lamanbib}) which characterises minimally rigid ({\em isostatic}) frameworks in terms of sparsity counts. Equivalent characterisations of generically rigid graphs in the plane have been obtained in terms of tree decompositions (see e.g. \cite{crapo,tay}) and using matroidal methods (\cite{lovyem}). 
In higher dimensions, generic rigidity has been characterised in terms of tree packings for body-bar, body-hinge and molecular frameworks (see \cite{jj,kattan,Tay84,tw,wwmatun}). An active research area is to consider the impact of symmetry on the rigidity of structures and various symmetric extensions of the aforementioned results have been established (see \cite{schulze,BS4,schtan,schtan2}).  For example, in \cite{schulze} and \cite{BS4} symmetric versions of Laman's theorem were obtained for the three-fold rotational symmetry group $\C_3$ and for the reflectional and half-turn rotational symmetry groups $\C_s$ and $\C_2$. However, the analogous questions for the remaining symmetry groups (i.e. the dihedral groups $\C_{2v}$ and $\C_{3v}$) remain open.

A natural problem which has drawn recent interest is to develop rigidity theory in the presence of non-Euclidean distance constraints (see for example \cite{gor-thu,kitson,sit-wil,sta}). 
In the case of the $\ell^1$ and $\ell^\infty$ norms, the only possible symmetry groups for a bar-joint framework in the plane are the reflection group $\C_s$, the rotation groups $\C_2$ and $\C_4$, and the dihedral group $\C_{2v}$. It is known that the existence of a (non-symmetric) isostatic placement of a graph in the plane is characterised by the existence of a spanning tree decomposition (see \cite{kit-pow}). Moreover, in \cite{kit-sch}, necessary counting conditions were obtained on the structure graph of an isostatic symmetric bar-joint framework for each of the possible symmetry groups. In this article, these results are combined together with new graph construction schemes to obtain characterisations for the existence of a symmetric isostatic placement of a graph in the plane. 
Section~\ref{sec:construction} presents the graph construction scheme
 (Theorem \ref{ConstructionScheme}) and in Section~\ref{sec:suffcon} the following characterisations are established. (See also Figure~\ref{fig:symisofw}.)

\begin{figure}[htp]
\begin{center}
\begin{tikzpicture}[very thick,scale=0.8]
\tikzstyle{every node}=[circle, draw=black, fill=white, inner sep=0pt, minimum width=5pt];

\path (0,0.7) node (p1)  {} ;
\path (-1.2,0) node (p2)  {} ;
\path (1.2,0) node (p3)  {} ;
\path (-1.3,-0.9) node (p4)  {} ;
\path (1.3,-0.9) node (p6)  {} ;

\draw (p1)  --  (p2);
\draw (p1)  --  (p3);
\draw[lightgray] (p1)  --  (p4);
\draw[lightgray] (p1)  --  (p6);
\draw[lightgray] (p2)  --  (p4);
\draw[lightgray] (p3)  --  (p6);
\draw (p4)  --  (p3);
\draw (p6)  --  (p2);
\draw[dashed, thick] (0,-1.3)--(0,1.3);
\node[rectangle, draw=white,fill=white](l1) at (0,-2) {(A)};
\end{tikzpicture} 
\hspace{1.2cm}
\begin{tikzpicture}[very thick,scale=0.8,rotate=-45]
\tikzstyle{every node}=[circle, draw=black, fill=white, inner sep=0pt, minimum width=5pt];
\path (-0.35,-0.8) node (p1)  {} ;
\path (0.35,-0.8) node (p2)  {} ;
\path (-1.6,0.1) node (p3)  {} ;
\path (1.6,0.1) node (p4)  {} ;
\path (-0.7,0.8) node (p5)  {} ;
\path (0.7,0.8) node (p6)  {} ;

\draw[lightgray] (p1)  --  (p5);
\draw[lightgray] (p1)  --  (p3);
\draw (p1)  --  (p6);
\draw[lightgray] (p2)  --  (p5);
\draw (p2)  --  (p4);
\draw (p2)  --  (p6);
\draw (p5)  --  (p3);
\draw (p3)  --  (p6);
\draw[lightgray] (p5)  --  (p4);
\draw[lightgray] (p4)  --  (p6);
\draw[dashed, thick] (0,-1.7)--(0,1.7);
\node[rectangle, draw=white,fill=white](l1) at (1.5,-1.3) {(B)};
\end{tikzpicture}
\hspace{1.2cm}
\begin{tikzpicture}[very thick,scale=0.8]
\tikzstyle{every node}=[circle, draw=black, fill=white, inner sep=0pt, minimum width=5pt];

\path (0,0) node (p1)  {} ;
\path (-0.9,-0.9) node (p2)  {} ;
\path (1,-0.7) node (p3)  {} ;
\path (0.9,0.9) node (p4)  {} ;
\path (-1,0.7) node (p5)  {} ;
\path (-1.6,0.5) node (p6)  {} ;
\path (1.6,-0.5) node (p7)  {} ;

\draw[lightgray] (p1)  --  (p2);
\draw (p1)  --  (p3);
\draw[lightgray] (p1)  --  (p4);
\draw (p1)  --  (p5);
\draw (p2)  --  (p3);
\draw[lightgray] (p3)  --  (p4);
\draw (p4)  --  (p5);
\draw[lightgray] (p5)  --  (p2);

\draw[lightgray] (p6)  --  (p2);
\draw (p6)  --  (p5);
\draw (p7)  --  (p3);
\draw[lightgray] (p7)  --  (p4);
\node[rectangle, draw=white,fill=white](l1) at (0,-2) {(C)};
\end{tikzpicture} 
\hspace{1.2cm}
\begin{tikzpicture}[very thick,scale=0.8]
\tikzstyle{every node}=[circle, draw=black, fill=white, inner sep=0pt, minimum width=5pt];

 \node (p1) at (52:0.8cm) {};
 \node (p2) at (142:0.8cm) {};
 \node (p3) at (232:0.8cm) {};
 \node (p4) at (322:0.8cm) {};

 \node (v1) at (60:1.5cm) {};
 \node (v2) at (150:1.5cm) {};
 \node (v3) at (240:1.5cm) {};
 \node (v4) at (330:1.5cm) {};

\draw (p1)  --  (p2);
\draw[lightgray] (p2)  --  (p3);
\draw (p4)  --  (p3);
\draw[lightgray] (p1)  --  (p4);
\draw[lightgray] (p1)  --  (p3);
\draw (p4)  --  (p2);

\draw (v1)  --  (v2);
\draw[lightgray] (v2)  --  (v3);
\draw (v4)  --  (v3);
\draw[lightgray] (v1)  --  (v4);

\draw[lightgray] (v1)  --  (p1);
\draw (v2)  --  (p2);
\draw (v4)  --  (p4);
\draw[lightgray] (v3)  --  (p3);
\node[rectangle, draw=white,fill=white](l1) at (0,-2) {(D)};
\end{tikzpicture} 
     \end{center}
\vspace{-0.3cm}
     \caption{Examples of symmetric isostatic frameworks in the plane with $\ell^\infty$ distance constraints, illustrating the four statements in Theorem~\ref{CsThm}.}
\label{fig:symisofw}
\end{figure}
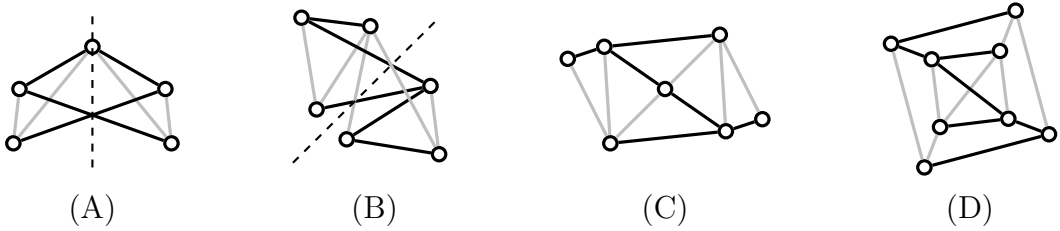

\begin{theorem}
\label{CsThm}
Let $\|\cdot\|_\P$ be a norm on $\bR^2$ for which the unit ball $\P$ is a quadrilateral and let $G$ be a finite simple graph with a group action $\theta:\bZ_n\to \Aut(G)$ where $\bZ_n=\langle \gamma\rangle$, $n\in\{2,4\}$.

\begin{enumerate}[(A)]
	\item 
The following statements are equivalent.
\begin{enumerate}[(i)]
\item There exists a representation $\tau:\bZ_2\to \GL(\bR^2)$ and a point $p$ such that the bar-joint framework $(G,p)$ is well-positioned and isostatic  in $(\bR^2,\|\cdot\|_\P)$ and $\C_s$-symmetric with respect to $\theta$ and $\tau$, where the symmetry operation $\gamma$ is a  reflection which preserves the facets of $\P$.
\item  $G$ is expressible as a union of two edge-disjoint spanning trees,  both of which are $\bZ_2$-symmetric with respect to $\theta$, and no edge of $G$ is fixed by $\gamma$.
\end{enumerate}

\item 
The following statements are equivalent.
\begin{enumerate}[(i)]
\item There exists a representation $\tau:\bZ_2\to \GL(\bR^2)$ and a point $p$ such that the bar-joint framework $(G,p)$ is well-positioned and isostatic  in $(\bR^2,\|\cdot\|_\P)$ and $\C_s$-symmetric with respect to $\theta$ and $\tau$, where the symmetry operation $\gamma$ is a  reflection which swaps the facets of $\P$.
\item  $G$ is expressible as a union of two edge-disjoint spanning trees $G_1$ and $G_2$,  such that $G_1=\gamma(G_2)$ and $G_2=\gamma(G_1)$, and no edge of $G$ is fixed by $\gamma$.
\end{enumerate}

\item
The following statements are equivalent.
\begin{enumerate}[(i)]
\item There exists a representation $\tau:\bZ_2\to \GL(\bR^2)$ and a point $p$ such that the bar-joint framework $(G,p)$ is well-positioned and isostatic  in $(\bR^2,\|\cdot\|_\P)$ and $\C_2$-symmetric with respect to $\theta$ and $\tau$, where the symmetry operation $\gamma$ is a half-turn rotation.
\item  $G$ is expressible as a union of two edge-disjoint spanning trees,  both of which are $\bZ_2$-symmetric with respect to $\theta$, and either no edge or two edges of $G$ are fixed by $\gamma$. 
\end{enumerate}

\item The following statements are equivalent.
\begin{enumerate}[(i)]
\item There exists a representation $\tau:\bZ_4\to \GL(\bR^2)$ and a point $p$ such that the bar-joint framework $(G,p)$ is well-positioned and isostatic  in $(\bR^2,\|\cdot\|_\P)$ and $\C_4$-symmetric with respect to $\theta$ and $\tau$, where the symmetry operation $\gamma$ is a quarter-turn rotation.
\item  $G$ is expressible as a union of two edge-disjoint spanning trees $G_1$ and $G_2$, 
such that $G_1=\gamma(G_2)$ and $G_2=\gamma(G_1)$ and either no edge or two edges of $G$ are fixed by $\gamma^2$.
\end{enumerate}
\end{enumerate}
\end{theorem}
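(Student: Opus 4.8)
The plan is to treat all four parts uniformly by first removing the symmetry and invoking the non-symmetric theory. A well-positioned framework $(G,p)$ in a quadrilateral (hence parallelogram) norm ball $\P$ carries a canonical partition $E(G)=E_1\sqcup E_2$ of its edges according to which of the two pairs of parallel facets of $\P$ is activated by the difference vector $p_i-p_j$, and, as in \cite{kit-pow}, after a linear change of coordinates adapted to $\P$ the rigidity matrix $R(G,p)$ is row-equivalent to $R_1\oplus R_2$, where $R_k$ is a signed incidence matrix of the subgraph $G_k:=(V(G),E_k)$. Since the only trivial infinitesimal motions in such a norm are the two translations, $(G,p)$ is well-positioned and isostatic if and only if $|E(G)|=2|V(G)|-2$ and each $G_k$ is a spanning tree. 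I would record this as the basic reduction underlying all four equivalences.

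\emph{Necessity} (i)$\Rightarrow$(ii). Given a well-positioned isostatic symmetric framework, the reduction immediately produces the two edge-disjoint spanning trees $G_1,G_2$. Now $\tau(\gamma)$ permutes the two facet classes of $\P$: in (A) (reflection preserving facets) and (C) (half-turn) it fixes each class, so $E_1$ and $E_2$ are $\theta(\bZ_2)$-invariant and hence $G_1,G_2$ are $\bZ_2$-symmetric; in (B) (reflection swapping facets) and (D) (quarter-turn) it interchanges the classes, so $\gamma(G_1)=G_2$ and $\gamma(G_2)=G_1$. The remaining clause --- the number of edges fixed by $\gamma$ in (A), (B), (C) and by $\gamma^2$ in (D) --- is precisely the necessary counting restriction on the structure graph of a symmetric isostatic framework established in \cite{kit-sch} for each of these symmetry situations, which forces $0$ fixed edges in the reflection cases and permits only $0$ or $2$ in the half-turn case and for $\gamma^2$ in the quarter-turn case. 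This settles (i)$\Rightarrow$(ii).

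\emph{Sufficiency} (ii)$\Rightarrow$(i). Here I would induct using the construction scheme of Theorem~\ref{ConstructionScheme}: every graph meeting the combinatorial hypothesis of (A), (B), (C) or (D) is obtained from one of finitely many base graphs by a sequence of symmetry-adapted Henneberg moves (symmetrised vertex-addition and edge-split, together with whatever orbit moves appear in Theorem~\ref{ConstructionScheme}). Two things then need checking. First, a base-case verification: each base graph admits an explicit well-positioned isostatic placement with the stated symmetry, which one confirms by placing the vertex orbits on a $\tau(\gamma)$-symmetric configuration and applying the incidence-matrix criterion above to see that both facet-class subgraphs are spanning trees --- a finite calculation in each case (for instance, the $\C_s$-framework in Figure~\ref{fig:symisofw}(A) is handled this way). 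Second, a move lemma: if $(G,p)$ is a well-positioned, isostatic, symmetric framework and $G'$ arises from $G$ by one of the moves, then the new vertices can be positioned, compatibly with the symmetry and with the required split of the new edges between the two facet classes, so that $(G',p')$ is again well-positioned, isostatic and symmetric. For vertex-addition this is immediate once the new edges are assigned to facet classes so that each $G_k'$ stays a spanning tree; for edge-split one additionally checks, in the incidence-matrix picture, that the replacement edges inherited by each facet class keep that subgraph a tree, which is an open and non-empty condition on the position of the inserted vertex.

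The real work sits in the move lemma, and specifically in the fact that the added vertices are not free but are constrained to a $\tau(\gamma)$-fixed locus (a mirror line, or the origin for a rotation-fixed vertex). One must verify that within that constrained locus the non-degeneracy conditions --- well-positionedness, and the spanning-tree property of $G_1'$ and $G_2'$ --- hold on an open dense subset, so that a valid placement actually exists; the usual ``pick a generic point'' step has to be redone inside the symmetry-allowed subspace. I expect this to be most delicate for the moves in (C) and (D) that create or remove the pair of $\gamma^2$-fixed edges: there the inserted vertices are pinned near the origin and the difference vectors along the fixed edges are essentially forced, so one must check both that those forced directions land in the interiors of the appropriate facet normal cones and that the corresponding rows of $R(G',p')$ stay independent of the rest. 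Confirming that the construction scheme reaches \emph{every} graph in each class with one uniform move set is the other potential sticking point, but that is exactly the content of Theorem~\ref{ConstructionScheme}, which may be taken as given.
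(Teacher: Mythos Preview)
Your necessity direction matches the paper almost exactly: monochrome decomposition via \cite{kit-pow}, the action of $\tau(\gamma)$ on facet classes to determine whether the two trees are individually invariant or swapped, and the fixed-edge counts from \cite{kit-sch}. That part is fine.

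The gap is in sufficiency. You invoke Theorem~\ref{ConstructionScheme} as a uniform construction scheme for all four cases, but that theorem is stated and proved only for \emph{admissible pairs}: graphs with two edge-disjoint $\bZ_2$-symmetric spanning trees and \emph{no} fixed edges. That hypothesis is exactly condition~(A)(ii), and also covers the no-fixed-edge subcase of~(C)(ii), but it does \emph{not} cover (B), the two-fixed-edge subcase of~(C), or~(D). In~(B) the trees satisfy $\gamma(G_1)=G_2$ rather than $\gamma(G_i)=G_i$, so the pair is not admissible in the sense of Theorem~\ref{ConstructionScheme}; the paper in fact builds a separate scheme for~(B) which allows parallel edges between fixed vertices, uses two additional base graphs ($K_1$ and the graph of Figure~\ref{fig:symisofw}(B)), and needs an extra bridging move joining two smaller graphs. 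For~(C) with two fixed edges, there are no ``moves that create or remove the pair of fixed edges'' in the scheme; the paper instead reduces to the no-fixed-edge case by an auxiliary-graph trick (subdividing the two fixed edges at a new fixed vertex $w_0$, applying~(A) to the enlarged graph $\hat G$, and then deleting $w_0$). Case~(D) likewise requires a $\bZ_4$-adapted modification.

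So your outline works cleanly for~(A), but for the other three parts you cannot simply cite Theorem~\ref{ConstructionScheme}; each needs either a genuinely different inductive scheme or a reduction back to~(A), and those are where the substantive work in the paper lies.
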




Theorem \ref{CsThm} provides the first combinatorial characterisations for symmetric isostatic frameworks in a non-Euclidean normed linear space. 
The results highlight an interplay between the symmetry operations of the framework, the unit ball in the normed space and spanning trees in the underlying graph. 
The statement of the theorem, and its proof, illustrate a sharp contrast with the corresponding Euclidean rigidity theory \cite{schulze,BS4}.
For the proof of Theorem \ref{CsThm}, several new graph moves are introduced together with rigidity-preserving geometric placements which may be of independent interest.  
The results have potential practical applications for non-Euclidean constraint systems (eg. polygonal packings, graph realizability, autonomous agents and CAD) and also offer a new perspective on symmetric tree packings in simple graphs. 
 It is an open problem to obtain a corresponding theorem for the  symmetry group $\C_{2v}$ and for other norms and symmetry groups. 


\section{Construction scheme for $\bZ_2$-symmetric graphs.}
\label{sec:construction}
An action of a group $\Gamma$ on a simple graph $G$ is a group homomorphism
$\theta:\Gamma\to\Aut(G)$. Here $\Aut(G)$ is the automorphism group of $G$ which consists of permutations of the vertices $\pi:V\to V$ such that $\pi(v)\pi(w)$ is an edge of $G$ if and only if $vw$ is an edge of $G$. If such an action exists then $G$ is said to be {\em $\Gamma$-symmetric with respect to the action $\theta$}
(or simply {\em $\Gamma$-symmetric} when the action is clear). If $H$ is a graph such that the vertex set of $H$ is a subset of the vertex set of $G$ and this vertex set is invariant under the permutation $\theta(\gamma)$ for each $\gamma\in\Gamma$ then $\theta$ induces an action of $\Gamma$ on $H$. For convenience this induced action is also denoted by $\theta$. 
If the action of $\theta$ is clear from the context then $\gamma v$ is used instead of $\theta(\gamma)v$ for each vertex $v\in V(G)$
and $\gamma(vw)$ instead of $(\theta(\gamma)v)(\theta(\gamma)w)$ for each edge $vw\in E(G)$.
A vertex of $G$ is fixed by $\gamma$ if $\gamma v=v$ and an edge $vw$ is fixed by $\gamma$ if $\gamma(vw)=vw$. 

\begin{definition}
Let $\theta:\bZ_2\to \Aut(G)$ be an action of the group $\bZ_2=\langle s\rangle$ on a finite simple graph $G$.
The pair $(G,\theta)$ is {\em admissible}  if it has the following properties.
\begin{enumerate}[(i)]
	\item $G$ is expressible as a union of two edge-disjoint spanning trees, 
		both of which are $\bZ_2$-symmetric with respect to $\theta$, and,
	\item no edge of $G$ is fixed by $s$.
\end{enumerate}
\end{definition}

In this section a construction scheme will be established for the class of admissible pairs $(G,\theta)$. This scheme is comprised of four graph extensions and a base graph $W_5$. The following elementary facts will be required. 

\begin{lemma}
\label{FixedVertexLemma} 
If  $(G,\theta)$ is an admissible pair then,
\begin{enumerate}[(i)]
\item there exists exactly one vertex $v_0$ in $G$ which is fixed by $s$, and,
\item  the unique fixed vertex $v_0$ has even degree and degree at least $4$.
\end{enumerate}
\end{lemma}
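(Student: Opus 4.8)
The plan is to exploit the two fundamental parity/counting facts about spanning trees: a spanning tree on $|V(G)|$ vertices has exactly $|V(G)|-1$ edges, and in a $\bZ_2$-symmetric spanning tree the fixed vertices and fixed edges interact with this count in a controlled way. First I would establish part (i). Since $G$ is a union of two edge-disjoint spanning trees, $G$ is connected, so $\theta(s)$ is a nontrivial-or-trivial automorphism; the involution $s$ partitions $V(G)$ into fixed vertices and $2$-element orbits. Suppose $T$ is one of the two $\bZ_2$-symmetric spanning trees. I would count edges of $T$ by their orbit type: an edge is either fixed by $s$, or lies in a $2$-element orbit. By condition (ii) of admissibility no edge of $G$ (hence of $T$) is fixed, so every edge of $T$ lies in a $2$-element orbit, giving $|E(T)| \equiv 0 \pmod 2$. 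On the other hand $|E(T)| = |V(G)| - 1$. If $f$ denotes the number of vertices fixed by $s$, then $|V(G)| = f + 2k$ for some $k\geq 0$, so $|E(T)| = f + 2k - 1$, and evenness of $|E(T)|$ forces $f$ to be odd; in particular $f \geq 1$. The harder direction is $f \leq 1$, i.e. exactly one fixed vertex. Here I would use a connectedness argument on the quotient structure: if there were two distinct fixed vertices $v_0, v_1$, consider the unique path $P$ between them in the tree $T$. Since $s$ fixes both endpoints and $T$ is $s$-invariant, $s(P)$ is also a $v_0$–$v_1$ path in $T$, hence $s(P) = P$ by uniqueness. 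An involution fixing a finite path setwise and fixing both its endpoints must fix the path pointwise (reverse it or fix it; fixing both endpoints rules out reversal for a path, or reversal of a path of even edge-length fixes the midpoint vertex while reversal of odd edge-length fixes a midpoint edge — contradicting condition (ii) in the latter case, and in the former producing a third fixed vertex). Pushing this through shows every vertex on $P$ is fixed; but then the edges of $P$ would be fixed by $s$ or swapped in pairs with both endpoints fixed — and an edge $vw$ with $v,w$ both fixed satisfies $\gamma(vw)=vw$, contradicting (ii). Hence $P$ has no edges, so $v_0 = v_1$. This gives uniqueness.

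For part (ii), let $v_0$ be the unique fixed vertex. The key observation is that $\theta(s)$ permutes the edges of $G$ incident to $v_0$, and no such edge can be fixed: an edge $v_0w$ fixed by $s$ would satisfy $\{v_0, w\} = \{s v_0, s w\} = \{v_0, sw\}$, forcing $sw = w$, i.e. $w$ is a second fixed vertex, contradicting part (i) (here I use $w \neq v_0$ since $G$ is simple). Therefore the edges at $v_0$ are partitioned into $2$-element orbits under $s$, so $\deg_G(v_0)$ is even. To get $\deg_G(v_0) \geq 4$, recall $G$ is the union of two edge-disjoint spanning trees $T_1, T_2$; in each $T_i$ the vertex $v_0$ has degree at least $1$ (spanning tree, so every vertex is non-isolated), but by the same orbit argument applied within $T_i$ — $T_i$ is $\bZ_2$-symmetric and has no fixed edge — $\deg_{T_i}(v_0)$ is even, hence $\deg_{T_i}(v_0) \geq 2$. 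Summing over the two edge-disjoint trees, $\deg_G(v_0) \geq \deg_{T_1}(v_0) + \deg_{T_2}(v_0) \geq 4$.

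I expect the main obstacle to be the rigorous handling of the involution acting on the path $P$ in the uniqueness argument: one must carefully rule out the possibility that $s$ acts on $P$ as the orientation-reversing involution (which would either fix a midpoint vertex, again violating uniqueness of the fixed vertex, or fix a midpoint edge, violating condition (ii)), versus acting trivially on $P$ (which populates $P$ with fixed vertices and hence, since $P$ has $\geq 1$ edge when $v_0 \neq v_1$, produces a fixed edge or an edge with two fixed endpoints). Both branches contradict admissibility, but the case analysis on the parity of the length of $P$ needs to be laid out cleanly. Everything else is a routine orbit-counting bookkeeping exercise.
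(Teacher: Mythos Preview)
Your proposal is correct and follows essentially the same approach as the paper: the parity count on $|E(T)|$ to get $f$ odd, the unique-path-in-a-tree argument for $f\le 1$, and the orbit argument on edges at $v_0$ for part (ii) are exactly what the paper does. Your anticipated difficulty with the path argument is overblown, though---the paper dispatches it in one line by noting that $s(P)=P$ forces $s$ to act as a path automorphism fixing both endpoints, hence as the identity on $P$ (the only other path automorphism is reversal, which swaps the distinct endpoints), so every edge of $P$ is fixed; no parity case analysis is needed.
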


\proof 
The graph $G$ is expressible as a union of two edge-disjoint $\bZ_2$-symmetric spanning trees $G_1$ and $G_2$. 
If two distinct vertices, $v_0$ and $v_1$,  are fixed by  $s$
then $G_1$ must contain simple paths $P$ and $s(P)$ joining $v_0$ to $v_1$. 
Since no edge of $G$ is fixed by $s$, $P\not=s(P)$.
This is a contradiction and so there is at most one vertex in $G$ which is fixed by $s$.
Since $s$ acts as an involution on the edges of $G_1$ with no fixed points, $|E(G_1)|$ is even. In particular, $|V(G)|$ is odd and, since $s$ acts as an involution on $V(G)$, at least one vertex in $G$ must be fixed by $s$.
This proves the first statement.
For the second statement, note that $s$ acts as an involution with no fixed points on the set of edges adjacent to $v_0$. Thus, $v_0$ has even degree in $G$.
Also, $v_0$ is adjacent to some distinct edges $e,se\in E(G_1)$ and  $f,sf\in E(G_2)$ and so $v_0$ has degree at least $4$ in $G$.
\endproof

For a vertex $v$ in $G$, the set consisting of all vertices which are adjacent to $v$ is denoted $N(v)$. 

\begin{definition}
Let $\theta:\bZ_2\to \Aut(G)$ be an action of the group $\bZ_2=\langle s\rangle$ on $G$ and let $v\in V(G)$.
The {\em symmetric neighbourhood} of $v$, denoted $\S(v)$, is the subgraph of $G$ induced by the vertex set $N(v)\cup N(sv)\cup\{v,sv\}$.
\end{definition}

Three of the four graph extensions in the construction scheme for admissible pairs are determined by identifying the possible symmetric neighbourhoods of a $3$-valent vertex $v$ in $G$. This approach is coordinated by separating the symmetric neighbourhoods $\S(v)$ into four types depending on the cardinality of $N(v)\cap N(sv)$.

\begin{lemma}
\label{NbdLemma}
Let $(G,\theta)$ be an admissible pair.
If $v$ is a $3$-valent vertex in $G$ then $N(v)$ and $N(sv)$ are either, 
\begin{enumerate}[(i)]
	\item disjoint, 
	\item intersect in one vertex, which must be the fixed vertex, 
	\item intersect in two vertices, neither of which are the fixed vertex, or,
	\item intersect in three vertices, one of which is the fixed vertex.
\end{enumerate}
\end{lemma}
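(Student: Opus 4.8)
The plan is to study the set of common neighbours $N(v)\cap N(sv)$ directly, decomposing it into orbits of the order-two automorphism $\theta(s)$ and invoking the uniqueness of the fixed vertex established in Lemma~\ref{FixedVertexLemma}.

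First I would note that, since $v$ is $3$-valent, Lemma~\ref{FixedVertexLemma}(ii) implies $v\neq v_0$ and hence $sv\neq v$; in particular $|N(v)|=|N(sv)|=3$, so $N(v)\cap N(sv)$ has at most three elements. Observe also that $v\notin N(v)$ and $sv\notin N(sv)$ since $G$ has no loops, so neither $v$ nor $sv$ lies in $N(v)\cap N(sv)$, regardless of whether or not $v$ and $sv$ are adjacent.

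The crux is the observation that $N(v)\cap N(sv)$ is a union of $\theta(s)$-orbits: if $w\in N(v)\cap N(sv)$, then applying the automorphism $\theta(s)$ to the edges $wv$ and $w(sv)$ produces the edges $(sw)(sv)$ and $(sw)v$, so $sw\in N(v)\cap N(sv)$ as well. An orbit of $\theta(s)$ on this set is therefore either a singleton $\{w\}$ with $sw=w$, which by Lemma~\ref{FixedVertexLemma}(i) forces $w=v_0$, or a two-element set $\{w,sw\}$ with $w\neq v_0\neq sw$. Consequently $N(v)\cap N(sv)$ is a (possibly empty) disjoint union of two-element orbits together with at most one copy of $v_0$, so its cardinality is $\varepsilon+2k$ for some $\varepsilon\in\{0,1\}$ and $k\geq 0$.

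Intersecting this with the bound $|N(v)\cap N(sv)|\leq 3$ leaves precisely four cases: cardinality $0$, i.e.\ $N(v)$ and $N(sv)$ disjoint; cardinality $1=1+2\cdot 0$, where the unique common neighbour is $v_0$; cardinality $2=0+2\cdot 1$, a single orbit $\{w,sw\}$ with neither vertex equal to $v_0$; and cardinality $3=1+2\cdot 1$, namely $v_0$ together with one orbit $\{w,sw\}$, so exactly one of the three common neighbours is the fixed vertex. These are exactly statements (i)--(iv). The argument is short, and there is no real obstacle; the only point requiring care is the parity bookkeeping of the orbit decomposition and the (harmless) verification that $v$ and $sv$ are themselves never common neighbours. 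Note that $3$-valence of $v$ enters only through the bound $|N(v)|=3$, so the same classification in fact holds for any vertex $v\neq v_0$.
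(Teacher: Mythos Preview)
Your proof is correct and follows essentially the same approach as the paper: both arguments note that $s$ acts as an involution on $N(v)\cap N(sv)$, decompose this set into orbits of size one or two, and then invoke Lemma~\ref{FixedVertexLemma} to conclude that at most one orbit is a singleton (namely $\{v_0\}$). Your write-up is somewhat more explicit about why the intersection is $s$-invariant and adds a few harmless side remarks, but the underlying idea is identical.
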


\proof
The induced action of  $s$ on $N(v)\cap N(sv)$ is an involution. 
If $N(v)$ and $N(sv)$ intersect in one vertex, then this vertex is clearly fixed by $s$.
If $N(v)$ and $N(sv)$ intersect in two vertices then neither, or, both of these vertices are fixed by $s$. 
By Lemma \ref{FixedVertexLemma}, $G$ contains only one vertex which is fixed by $s$, and so neither vertex of $N(v)\cap N(sv)$ is fixed by $s$.
If $N(v)$ and $N(sv)$ intersect in three vertices then $s$ acts as a transposition on $N(v)\cap N(sv)$  leaving one vertex fixed.
\endproof

The {\em wheel graph} $W_5$ is the graph with five distinct vertices $v_0,v_1,\ldots,v_{4}$ and an edge set consisting of the $4$-cycle $v_1v_2,\, v_2v_3,\,v_3v_4,\, v_{4}v_{1}$ together with four {\em spokes} $\{v_0v_j:j=1,2,3,4\}$. 
The wheel graph $W_5$ will form the base graph in the construction scheme for admissible pairs. 
(See also Figure \ref{fig:symisofw}(A)).

\begin{example}
\label{WheelGraph}
Let $W_5$ be the wheel graph  and define an action $\theta^*:\bZ_2\to \Aut(W_5)$  by setting $sv_1=v_3$, $sv_2=v_4$ and $sv_0=v_0$.
Then $W_5$ is expressible as an edge-disjoint union of two $\bZ_2$-symmetric spanning trees $T_1$ and $T_2$ with edge sets $E(T_1)=\{v_2v_3,\,v_3v_0,\,v_0v_1,\,v_1v_4\}$ and  $E(T_2)=\{v_1v_2,\,v_2v_0,\,v_0v_4,\,v_4v_3\}$. 
Note that $W_5$ has no edges which are fixed by $s$ and so $(W_5,\theta^*)$ is an admissible pair. Also, $\theta^*$ is the only action of $\bZ_2$ on $W_5$ for which no edge of $W_5$ is fixed by $s$.
\end{example}

\subsection{$(\bZ_2,\theta)$ $0$-extensions.}

The first graph extension in the construction scheme for admissible pairs involves $2$-valent vertices. (See  Fig.~\ref{1Ext} for an illustration).

\begin{definition}
\label{Cs1extension}
Let $G$ be a simple graph which is $\bZ_2$-symmetric with respect to $\theta$. 
Suppose there exists a graph $H$ and distinct vertices $v_1,v_2\in V(H)$ with the following properties.
\begin{enumerate}[(i)]
\item $V(G)=V(H)\cup \{v,sv\}$ where $v,sv\notin V(H)$ and $v\not=sv$.
\item $H$ is $\bZ_2$-symmetric with respect to $\theta$.
\item $E(G)=E(H)\cup \big\{vv_{1},vv_{2},s(vv_{1}),s(vv_{2})\big\}$.
\end{enumerate}
Then  $G$ is said to be obtained from $H$ by applying a $(\bZ_2,\theta)$ \emph{$0$-extension} (on the vertices $v_1,v_2$). 
\end{definition}

\begin{figure}[htp]
\begin{center}
\begin{tikzpicture}[very thick,scale=0.9]
\tikzstyle{every node}=[circle, draw=black, fill=white, inner sep=0pt, minimum width=5pt];
\filldraw[fill=black!03!white, draw=black, thin, dashed](0,0)circle(1.3cm);
\node[rectangle, draw=black!03!white,fill=black!03!white](l1) at (-0.7,-0.3) {$v_2$};
\node[rectangle, draw=black!03!white,fill=black!03!white](l2) at (0.6,-0.3) {$sv_2$};
\node[rectangle, draw=black!03!white,fill=black!03!white](l3) at (-0.5,0.73) {$v_1$};
\node[rectangle, draw=black!03!white,fill=black!03!white](l4) at (0.4,0.73) {$sv_1$};
\node (p1) at (-0.5,0.4) {};
\node (p2) at (0.5,0.4) {};
\node (p3) at (-0.72,-0.67) {};
\node (p4) at (0.72,-0.67) {};
\filldraw[fill=black!50!white, draw=black, thick]
    (2.45,0) -- (3.05,0) -- (3.05,-0.1) -- (3.25,0.05) -- (3.05,0.2) -- (3.05,0.1) -- (2.45,0.1) -- (2.45,0);
\end{tikzpicture}
\hspace{0.7cm}
\begin{tikzpicture}[very thick,scale=0.9]
\tikzstyle{every node}=[circle, draw=black, fill=white, inner sep=0pt, minimum width=5pt];
\filldraw[fill=black!03!white, draw=black, thin, dashed](0,0)circle(1.3cm);
\node[rectangle, draw=black!03!white,fill=black!03!white](l1) at (-0.65,-0.3) {$v_2$};
\node[rectangle, draw=black!03!white,fill=black!03!white](l2) at (0.55,-0.3) {$sv_2$};
\node[rectangle, draw=black!03!white,fill=black!03!white](l3) at (-0.5,0.73) {$v_1$};
\node[rectangle, draw=black!03!white,fill=black!03!white](l4) at (0.4,0.73) {$sv_1$};
\node (p1) at (-0.5,0.4) {};
\node (p2) at (0.5,0.4) {};
\node (p3) at (-0.72,-0.67) {};
\node (p4) at (0.72,-0.67) {};
\node (p5) at (-1.6,0) {};
\node (p6) at (1.6,0) {};
\node[rectangle, draw=white,fill=white](l4) at (-1.9,0) {$v$};
\node[rectangle, draw=white,fill=white](l4) at (2,0) {$sv$};
\draw(p5)--(p1);
\draw(p5)--(p3);
\draw(p6)--(p2);
\draw(p6)--(p4);
\end{tikzpicture}
\end{center}
\vspace{-0.2cm}
\caption{\emph{A $(\bZ_2,\theta)$ $0$-extension $G$ of a graph $H$.}}
\label{1Ext}
\end{figure}
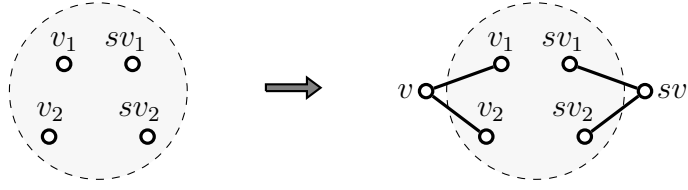

The proof that admissible pairs $(G,\theta)$ are constructible by graph extensions is by induction on the number of vertices of $G$.
To this end, it will be shown that if $|V(G)|\geq6$ then $G$ is obtained from a graph $H$ with fewer vertices by some graph extension in the construction scheme  and that $(H,\theta)$ is an admissible pair.  If $G$ contains a $2$-valent vertex then this is achieved by a $(\bZ_2,\theta)$ $0$-extension.  

\begin{lemma}
\label{Degree2VertexLemma}
Let $(G,\theta)$ be an admissible pair and let $v$ be a $2$-valent vertex in $G$.
Then  there exists a graph $H$ with the following properties. 
\begin{enumerate}[(i)]
\item $G$ is obtained from $H$ by applying a $(\bZ_2,\theta)$ $0$-extension.
\item $(H,\theta)$ is an admissible pair. 
\end{enumerate}
\end{lemma}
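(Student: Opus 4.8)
The plan is to let $H$ be the graph obtained from $G$ by deleting $v$ and $sv$ together with their incident edges, and then to verify directly that $G$ is a $(\bZ_2,\theta)$ $0$-extension of $H$ and that $(H,\theta)$ is admissible.

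First I would record the basic non-degeneracy facts about the $2$-valent vertex $v$. By Lemma~\ref{FixedVertexLemma} the unique fixed vertex $v_0$ has degree at least $4$, so $v\ne v_0$ and hence $sv\ne v$. Write $N(v)=\{v_1,v_2\}$; since $G$ is simple we have $v_1\ne v_2$. If $v_1=sv$ then the edge $vv_1=v(sv)$ would satisfy $s(v(sv))=v(sv)$, contradicting condition (ii) of admissibility; the same argument rules out $v_2=sv$. Thus $v_1,v_2\in V(G)\setminus\{v,sv\}$, and a short check of the same kind shows the four edges $vv_1,vv_2,s(vv_1),s(vv_2)$ are pairwise distinct. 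Setting $V(H)=V(G)\setminus\{v,sv\}$ and $E(H)=E(G)\setminus\{vv_1,vv_2,s(vv_1),s(vv_2)\}$, the set $V(H)$ is invariant under $\theta$ and $E(H)$ is invariant under $\theta$, so $H$ is $\bZ_2$-symmetric with respect to $\theta$, and $G$ is obtained from $H$ by a $(\bZ_2,\theta)$ $0$-extension on the vertices $v_1,v_2$ in the sense of Definition~\ref{Cs1extension}.

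Next I would produce the spanning tree decomposition of $H$. Let $G=G_1\cup G_2$ be an edge-disjoint union of two $\bZ_2$-symmetric spanning trees. Since each $G_i$ is connected and spanning on at least two vertices, $\deg_{G_i}(v)\ge 1$, and as $\deg_{G_1}(v)+\deg_{G_2}(v)=\deg_G(v)=2$ we get $\deg_{G_1}(v)=\deg_{G_2}(v)=1$; by symmetry the same holds at $sv$. After relabelling we may assume $vv_1\in E(G_1)$ and $vv_2\in E(G_2)$, so that $s(vv_1)=(sv)(sv_1)\in E(G_1)$ and $s(vv_2)=(sv)(sv_2)\in E(G_2)$. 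Hence $v$ and $sv$ are leaves of both $G_1$ and $G_2$; moreover, using $v\ne sv$ and $v_1\ne sv$, the vertex $sv$ is still a leaf of $G_1$ after deleting $v$ (and similarly in $G_2$). Deleting the two leaves $v,sv$ from $G_i$ therefore yields a spanning tree $H_i$ on the vertex set $V(H)$, with $E(H_i)=E(G_i)\setminus\{vv_i,s(vv_i)\}$; since the removed pair of edges is a single $s$-orbit, $H_i$ is again $\bZ_2$-symmetric with respect to $\theta$.

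Finally I would assemble the conclusion: $H_1$ and $H_2$ are edge-disjoint because $G_1$ and $G_2$ are, and $E(H_1)\cup E(H_2)=(E(G_1)\cup E(G_2))\setminus\{vv_1,vv_2,s(vv_1),s(vv_2)\}=E(H)$, so $H$ is an edge-disjoint union of the two $\bZ_2$-symmetric spanning trees $H_1$ and $H_2$. No edge of $H$ is fixed by $s$ since $E(H)\subseteq E(G)$, so $(H,\theta)$ is an admissible pair. I do not expect a serious obstacle here; the only point requiring care is the bookkeeping of the degenerate cases (that $v_1,v_2\ne sv$ and that $v,sv$ are leaves of \emph{both} spanning trees), both of which are forced by the ``no fixed edge'' and ``unique fixed vertex'' properties from Lemma~\ref{FixedVertexLemma}.
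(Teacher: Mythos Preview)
Your proposal is correct and follows essentially the same approach as the paper: delete $v$ and $sv$, set $H_i=G_i\cap H$, and observe these are edge-disjoint $\bZ_2$-symmetric spanning trees in $H$. The paper's proof is simply terser, asserting without elaboration that $H_1$ and $H_2$ are spanning trees, whereas you spell out the degree count showing $v$ and $sv$ are leaves in each $G_i$ and explicitly rule out the degenerate possibilities $v_1=sv$ or $v_2=sv$; this extra care is harmless and arguably clarifying.
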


\proof
By Lemma \ref{FixedVertexLemma}, the unique fixed vertex $v_0$ in $G$ has degree at least $4$ and so $v\not=sv$.
Let $H=G\backslash\{v,sv\}$. 
Then $H$ is  $\bZ_2$-symmetric with respect to $\theta$ and $G$ is obtained from $H$ by applying a $(\bZ_2,\theta)$ $0$-extension on the vertices of $N(v)$. 
The graph $G$ may be expressed as a union of two edge-disjoint $\bZ_2$-symmetric spanning trees $G_1$ and $G_2$.
Let $H_1=G_1\cap H$ and $H_2=G_2\cap H$.
Then $H_1$ and $H_2$ are edge-disjoint $\bZ_2$-symmetric spanning trees in $H$  and $H$ is the union of $H_1$ and $H_2$. Also, $H$ has no edges which are fixed by $s$ and so $(H,\theta)$ is an admissible pair. 
\endproof

\subsection{$(\bZ_2,\theta)$ $1$-extensions}
The second graph extension in the construction scheme is a $(\bZ_2,\theta)$ $1$-extension. (See Fig.~\ref{2Ext}.)
This, and the remaining  graph extensions, apply to $3$-valent vertices. 

\begin{definition}
\label{Cs2extension}
Let $G$ be a simple graph which is $\bZ_2$-symmetric with respect to $\theta$. 
Suppose there exists a graph $H$, distinct vertices $v_{1},v_{2},v_{3}\in V(H)$ and an edge $e=v_1v_2\in E(H)$ with the following properties.
\begin{enumerate}[(i)]
\item $V(G)=V(H)\cup \{v,sv\}$ where $v,sv\notin V(H)$ and $v\not=sv$.
\item $H$ is $\bZ_2$-symmetric with respect to $\theta$  and $e\not=se$.
\item $E(G)=\big(E(H)\setminus \big\{e,se\big\}\big)\cup \big\{vv_{i},s(vv_{i})\,|\,\, i=1,2,3\big\}$.
\end{enumerate}
Then  $G$ is said to be obtained from $H$ by applying a $(\bZ_2,\theta)$ \emph{$1$-extension} (on the vertices $v_1,v_2,v_3$ and the edge $e$). 
\end{definition}

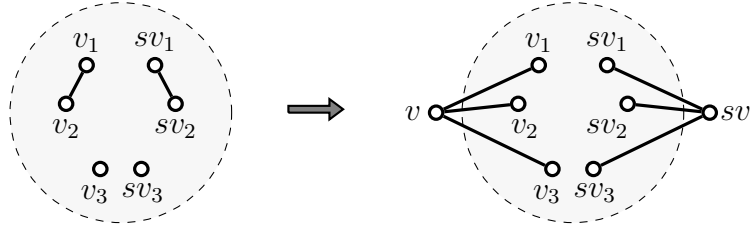
\begin{figure}[htp]
\begin{center}
\begin{tikzpicture}[very thick,scale=0.9]
\tikzstyle{every node}=[circle, draw=black, fill=white, inner sep=0pt, minimum width=5pt];
\filldraw[fill=black!03!white, draw=black, thin, dashed](0,0)circle(1.62cm);
\node[rectangle, draw=black!03!white,fill=black!03!white](l1) at (-0.5,1.03) {$v_1$};
\node[rectangle, draw=black!03!white,fill=black!03!white](l2) at (0.5,1.05) {$sv_1$};
\node[rectangle, draw=black!03!white,fill=black!03!white](l1) at (-0.8,-0.23) {$v_2$};
\node[rectangle, draw=black!03!white,fill=black!03!white](l2) at (0.8,-0.23) {$sv_2$};
\node[rectangle, draw=black!03!white,fill=black!03!white](l1) at (-0.38,-1.2) {$v_3$};
\node[rectangle, draw=black!03!white,fill=black!03!white](l2) at (0.32,-1.18) {$sv_3$};
\node (p1) at (-0.5,0.7) {};
\node (p2) at (0.5,0.7) {};
\node (p3) at (-0.8,0.13) {};
\node (p4) at (0.8,0.13) {};
\node (p5) at (-0.3,-0.83) {};
\node (p6) at (0.3,-0.83) {};
\draw(p1)--(p3);
\draw(p2)--(p4);
\filldraw[fill=black!50!white, draw=black, thick]
    (2.45,0) -- (3.05,0) -- (3.05,-0.1) -- (3.25,0.05) -- (3.05,0.2) -- (3.05,0.1) -- (2.45,0.1) -- (2.45,0);
\end{tikzpicture}
\hspace{0.5cm}
\begin{tikzpicture}[very thick,scale=0.9]
\tikzstyle{every node}=[circle, draw=black, fill=white, inner sep=0pt, minimum width=5pt];
\filldraw[fill=black!03!white, draw=black, thin, dashed](0,0)circle(1.62cm);
\node[rectangle, draw=black!03!white,fill=black!03!white](l1) at (-0.5,1.03) {$v_1$};
\node[rectangle, draw=black!03!white,fill=black!03!white](l2) at (0.5,1.05) {$sv_1$};
\node[rectangle, draw=black!03!white,fill=black!03!white](l1) at (-0.7,-0.23) {$v_2$};
\node[rectangle, draw=black!03!white,fill=black!03!white](l2) at (0.5,-0.23) {$sv_2$};
\node[rectangle, draw=black!03!white,fill=black!03!white](l1) at (-0.38,-1.2) {$v_3$};
\node[rectangle, draw=black!03!white,fill=black!03!white](l2) at (0.32,-1.18) {$sv_3$};
\node (p1) at (-0.5,0.7) {};
\node (p2) at (0.5,0.7) {};
\node (p3) at (-0.8,0.13) {};
\node (p4) at (0.8,0.13) {};
\node (p5) at (-0.3,-0.83) {};
\node (p6) at (0.3,-0.83) {};
\node (p7) at (-2,0) {};
\node (p8) at (2,0) {};
\node[rectangle, draw=white,fill=white](l1) at (-2.34,0) {$v$};
\node[rectangle, draw=white,fill=white](l2) at (2.38,0) {$sv$};
\draw(p7)--(p1);
\draw(p7)--(p3);
\draw(p7)--(p5);
\draw(p8)--(p2);
\draw(p8)--(p4);
\draw(p8)--(p6);
\end{tikzpicture} 
\end{center}
\vspace{-0.2cm}
\caption{\emph{A $(\bZ_2,\theta)$ $1$-extension $G$ of a graph $H$.}}
\label{2Ext}
\end{figure}

If $(G,\theta)$ is an admissible pair and if $G$ contains a $3$-valent vertex $v$ then it may not be the case that $G$ can be obtained from a graph $H$ by a $(\bZ_2,\theta)$ $1$-extension. 
The $(\bZ_2,\theta)$ $1$-extension is sufficient, however,  under the additional assumption that $N(v)$ and $N(sv)$ intersect in exactly two vertices.

\begin{lemma}
\label{2VertexLemma}
Let $(G,\theta)$ be an admissible pair and let $v$ be a $3$-valent vertex in $G$.
Suppose $N(v)$ and $N(sv)$ intersect in two vertices.
Then  there exists a graph $H$ with the following properties. 
\begin{enumerate}[(i)]
\item $G$ is obtained from $H$ by applying a $(\bZ_2,\theta)$ $1$-extension.
\item $(H,\theta)$ is an admissible pair. 
\end{enumerate}
\end{lemma}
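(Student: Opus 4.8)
The plan is to carry out a \emph{$(\bZ_2,\theta)$ $1$-reduction}: delete $v$ and $sv$ with their incident edges and adjoin one symmetric pair of edges $\{e,se\}$ among the former neighbours so that a symmetric tree decomposition is repaired. Fix edge-disjoint $\bZ_2$-symmetric spanning trees $G_1,G_2$ with $G=G_1\cup G_2$. Since the unique fixed vertex $v_0$ has degree at least $4$ (Lemma~\ref{FixedVertexLemma}), $v\neq v_0$, so $v\neq sv$ and $sv$ is also $3$-valent. By Lemma~\ref{NbdLemma}(iii) the set $N(v)\cap N(sv)$ consists of two non-fixed vertices on which the induced $s$-action is a fixed-point-free involution, so $N(v)\cap N(sv)=\{x,sx\}$; writing $y$ for the third neighbour of $v$ we get $N(v)=\{x,sx,y\}$ and $N(sv)=\{x,sx,sy\}$, and one checks that $v,sv,x,sx,y,sy$ are pairwise distinct (in particular $v\notin N(sv)$, since any edge joining $v$ and $sv$ would be fixed by $s$).

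First I would determine how the six edges incident to $v$ and $sv$ split between $G_1$ and $G_2$. As both $G_1$ and $G_2$ are spanning trees, each meets $v$, hence $sv$, in at least one edge, so one of them — say $G_1$ — carries two of $vx,v(sx),vy$ and the other carries one. If the two $G_1$-edges at $v$ were $vx$ and $v(sx)$, then $s$ would give $(sv)(sx),(sv)x\in E(G_1)$ and the four distinct vertices $v,x,sv,sx$ would bound a $4$-cycle in the tree $G_1$, which is absurd. So, after relabelling $x\leftrightarrow sx$ if necessary, the $G_1$-edges at $v$ are $vx,vy$, those at $sv$ are $(sv)(sx),(sv)(sy)$, and $v(sx),(sv)x\in E(G_2)$.

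Next comes the component bookkeeping. Removing $v$ from the tree $G_1$ leaves subtrees $C_x\ni x$ and $C_y\ni y$; the vertex $sv$, along with its $G_1$-neighbours $sx,sy$, lies in exactly one of them, and removing $sv$ as well splits that subtree along the path $sx$–$sv$–$sy$, so $G_1\backslash\{v,sv\}$ has exactly three components. Using that $s$ is an involution permuting these three components, I would locate them: $s$ swaps the components of $y$ and $sy$ and fixes the third, forcing $x$ into the component of $sx$ (if $sv\in C_x$) or $y$ into the component of $sy$ (if $sv\in C_y$). In either case a direct check then shows that adjoining either of the symmetric pairs $\{xy,(sx)(sy)\}$ or $\{(sx)y,x(sy)\}$ to $G_1\backslash\{v,sv\}$ joins all three components into a spanning tree of $V(G)\backslash\{v,sv\}$, necessarily $\bZ_2$-symmetric; and $H_2:=G_2\backslash\{v,sv\}$ is already a $\bZ_2$-symmetric spanning tree of that vertex set, as $v$ and $sv$ are leaves of $G_2$.

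Finally I must choose the pair so that the resulting $H$ is simple with no $s$-fixed edge, i.e.\ so that $e\in\{xy,(sx)y\}$ is not already an edge of $G$ — and then $se\notin E(G)$ too, with $e\neq se$ by distinctness of the six vertices, so no new edge is $s$-fixed. This is the main obstacle. I claim $xy$ and $(sx)y$ cannot both lie in $E(G)$: if they did, then by $\bZ_2$-symmetry so would $(sx)(sy)$ and $x(sy)$, and together with the six edges at $v,sv$ this would put at least $10$ distinct edges on $S=\{v,sv,x,sx,y,sy\}$; since $G_1[S]$ and $G_2[S]$ are edge-disjoint forests on $S$ they have at most $5$ edges each, so both have exactly $5$ and $G_1[S]$, a forest on $6$ vertices with $5$ edges, is a spanning tree of $S$. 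But $G_1[S]$ contains $vx,vy,(sv)(sx),(sv)(sy)$, whose two components $\{v,x,y\}$ and $\{sv,sx,sy\}$ are swapped by $s$, so the remaining edge $g$ of $G_1[S]$ runs between them; then $s(g)$ also runs between them, forcing $s(g)=g$ and contradicting admissibility of $(G,\theta)$. Hence a valid $e$ exists; with $v_1,v_2$ the ends of $e$ and $v_3$ the third neighbour of $v$, the graph $H=(G\backslash\{v,sv\})+e+se$ has $H=H_1\cup H_2$ a union of two edge-disjoint $\bZ_2$-symmetric spanning trees (where $H_1=(G_1\backslash\{v,sv\})+e+se$) with no $s$-fixed edge, so $(H,\theta)$ is admissible, and $G$ is obtained from $H$ by a $(\bZ_2,\theta)$ $1$-extension (Definition~\ref{Cs2extension}) on $v_1,v_2,v_3$ and $e$.
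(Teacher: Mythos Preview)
Your proof is correct and follows essentially the same construction as the paper: delete $v,sv$ and adjoin a symmetric edge pair among the former neighbours, choosing $e=xy$ if that is not already an edge and $e=(sx)y$ otherwise (the paper writes this as $v_1v_2$ versus $v_1(sv_2)$, which is the same symmetric pair). The main difference is in the supporting details: where the paper argues connectedness of $H_1$ by ad hoc path-chasing in two cases and leaves the fact that $v_1(sv_2)\notin E(G)$ in Case~2 to the figures, you instead give a uniform component analysis of $G_1\backslash\{v,sv\}$ and an explicit counting argument on $G[S]$ to rule out both candidate edges being present --- this is a little more careful, but not a genuinely different route.
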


\proof
By Lemma \ref{FixedVertexLemma}, $v$ is not fixed by $s$.
By Lemma \ref{NbdLemma}, $N(v)$ does not contain the fixed vertex $v_0$ and 
so $N(v)=\{v_1,sv_1,v_2\}$ for some $v_1,v_2\in V(G)\backslash\{v_0\}$.
The graph $G$ may be expressed as a union of two edge-disjoint $\bZ_2$-symmetric spanning trees $G_1$ and $G_2$. Thus $\S(v)$ appears in Fig. \ref{Degree3VertexC}  with edges of $G_1$ and $G_2$ indicated in black and gray respectively.
Note that either $vv_1\notin E(G_1)$ or $v(sv_1)\notin E(G_1)$ since, otherwise, $G_1$ would contain a cycle. The cases where $v_1v_2\notin E(G)$
and where $v_1v_2\in E(G)$ are considered separately.

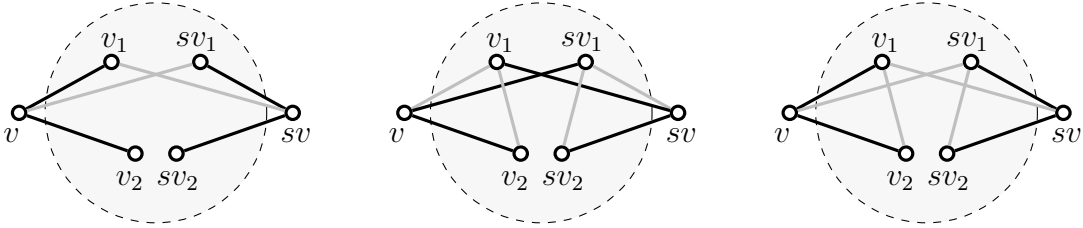
\begin{figure}[htp]
\centering
\begin{tabular}{c}

\begin{tikzpicture}[very thick,scale=0.9]
\tikzstyle{every node}=[circle, draw=black, fill=white, inner sep=0pt, minimum width=5pt];
\filldraw[fill=black!03!white, draw=black, thin, dashed](0,0)circle(1.62cm);
\node[rectangle, draw=black!03!white,fill=black!03!white](l1) at (-0.6,1.03) {$v_1$};
\node[rectangle, draw=black!03!white,fill=black!03!white](l2) at (0.6,1.05) {$sv_1$};
\node[rectangle, draw=black!03!white,fill=black!03!white](l1) at (-0.38,-1) {$v_2$};
\node[rectangle, draw=black!03!white,fill=black!03!white](l2) at (0.32,-1) {$sv_2$};
\node (p1) at (-0.65,0.75) {};
\node (p2) at (0.65,0.75) {};
\node (p5) at (-0.3,-0.6) {};
\node (p6) at (0.3,-0.6) {};
\node (p7) at (-2,0) {};
\node (p8) at (2,0) {};
\node[rectangle, draw=white,fill=white,below right](l1) at (-2.25,-0.2) {$v$};
\node[rectangle, draw=white,fill=white,below left](l2) at (2.3,-0.2) {$sv$};
\draw(p7)--(p1);
\draw[lightgray](p7)--(p2);
\draw(p7)--(p5);
\draw(p8)--(p2);
\draw[lightgray](p8)--(p1);
\draw(p8)--(p6);
\end{tikzpicture}
\hspace{0.7cm}
\begin{tikzpicture}[very thick,scale=0.9]
\tikzstyle{every node}=[circle, draw=black, fill=white, inner sep=0pt, minimum width=5pt];
\filldraw[fill=black!03!white, draw=black, thin, dashed](0,0)circle(1.62cm);
\node[rectangle, draw=black!03!white,fill=black!03!white](l1) at (-0.6,1.03) {$v_1$};
\node[rectangle, draw=black!03!white,fill=black!03!white](l2) at (0.6,1.05) {$sv_1$};
\node[rectangle, draw=black!03!white,fill=black!03!white](l1) at (-0.38,-1) {$v_2$};
\node[rectangle, draw=black!03!white,fill=black!03!white](l2) at (0.32,-1) {$sv_2$};
\node (p1) at (-0.65,0.75) {};
\node (p2) at (0.65,0.75) {};
\node (p5) at (-0.3,-0.6) {};
\node (p6) at (0.3,-0.6) {};
\node (p7) at (-2,0) {};
\node (p8) at (2,0) {};
\node[rectangle, draw=white,fill=white,below right](l1) at (-2.25,-0.2) {$v$};
\node[rectangle, draw=white,fill=white,below left](l2) at (2.3,-0.2) {$sv$};
\draw[lightgray](p7)--(p1);
\draw(p7)--(p2);
\draw(p7)--(p5);
\draw[lightgray](p8)--(p2);
\draw(p8)--(p1);
\draw(p8)--(p6);
\draw[lightgray](p1)--(p5);
\draw[lightgray](p2)--(p6);
\end{tikzpicture}
\hspace{0.7cm}
\begin{tikzpicture}[very thick,scale=0.9]
\tikzstyle{every node}=[circle, draw=black, fill=white, inner sep=0pt, minimum width=5pt];
\filldraw[fill=black!03!white, draw=black, thin, dashed](0,0)circle(1.62cm);
\node[rectangle, draw=black!03!white,fill=black!03!white](l1) at (-0.6,1.03) {$v_1$};
\node[rectangle, draw=black!03!white,fill=black!03!white](l2) at (0.6,1.05) {$sv_1$};
\node[rectangle, draw=black!03!white,fill=black!03!white](l1) at (-0.38,-1) {$v_2$};
\node[rectangle, draw=black!03!white,fill=black!03!white](l2) at (0.32,-1) {$sv_2$};
\node (p1) at (-0.65,0.75) {};
\node (p2) at (0.65,0.75) {};
\node (p5) at (-0.3,-0.6) {};
\node (p6) at (0.3,-0.6) {};
\node (p7) at (-2,0) {};
\node (p8) at (2,0) {};
\node[rectangle, draw=white,fill=white,below right](l1) at (-2.25,-0.2) {$v$};
\node[rectangle, draw=white,fill=white,below left](l2) at (2.3,-0.2) {$sv$};
\draw(p7)--(p1);
\draw[lightgray](p7)--(p2);
\draw(p7)--(p5);
\draw(p8)--(p2);
\draw[lightgray](p8)--(p1);
\draw(p8)--(p6);
\draw[lightgray](p5)--(p1);
\draw[lightgray](p6)--(p2);
\end{tikzpicture}
\end{tabular}
\vspace{-0.1cm}
\caption{$N(v)$ and $N(sv)$ intersect in two vertices.}
\label{Degree3VertexC}
\end{figure}

{\bf Case (1)}
Suppose $v_1v_2\notin E(G)$. 
By relabeling $v_1$ and $sv_1$, it may be assumed that the edges $g=vv_1$ and $h=vv_2$ are both contained in the same spanning tree, $G_1$ say, and so $\S(v)$ is the leftmost subgraph in Fig. \ref{Degree3VertexC}. 
Let $H$ be the graph obtained by adjoining the edges $e=v_1v_2$ and $se$ to $G\backslash \{v,sv\}$.
Then $G$ is obtained from $H$ by applying a $(\bZ_2,\theta)$ $1$-extension on the vertices $v_1$, $sv_1$,  $v_2$ and the edge $e$. 
Let $H_1=(G_1\cap H)\cup\{e,se\}$ and $H_2=G_2\cap H$.
Then $H_1$ and $H_2$ are edge-disjoint $\bZ_2$-symmetric subgraphs in $H$.  
It is clear that $H_2$ is a spanning tree in $H$.
To see that $H_1$ is a spanning tree, note that 
for any vertex $w$ in $H$ there must exist a path in $G_1$ joining $w$ to $v$. 
In particular, there exists a path $P_w$ in $G_1\cap H$ from $w$ to either $v_1$ or $v_2$. 
Now $P_w\cup\{e\}$ contains a path in $H_1$ from $w$ to $v_1$. It follows that every vertex of $H$ is connected to every other vertex of $H$ in $H_1$. 
Thus $H_1$ is a connected spanning subgraph of $H$ with $|V(H)|-1$ edges and so $H_1$ is a tree. 

{\bf Case (2)}
Suppose $v_1v_2\in E(G)$. It may be assumed that two of the edges incident with $v$ belong to $G_1$. 
Thus $\S(v)$ is either the  centre or rightmost subgraph in Fig. \ref{Degree3VertexC}.
Let $H$ be the graph obtained by  adjoining the edges $e=v_1(sv_2)$ and $se$ to $G\backslash \{v,sv\}$.
Then $G$ is obtained from $H$ by applying a $(\bZ_2,\theta)$ $1$-extension  on the vertices $v_1$, $sv_1$ and $v_2$ and the edge $se$. 
Let $H_1=(G_1\cap H)\cup\{e,se\}$ and $H_2=G_2\cap H$. Then $H_1$ and $H_2$ are edge-disjoint $\bZ_2$-symmetric spanning subgraphs in $H$. Also, $H_2$ is clearly a tree. 
To show that $H_1$ is a tree, let $w$ be a vertex in $H$ and let $va$ and $vb$ be the two edges in $G_1$ which are incident with $v$. 
There must exist a path $P_{w}$ in $G_1\cap H$ from $w$ to either $a$ or $b$.
Also, there must exist a path $P$ in $G_1\cap H$ joining either $a$ or $b$ to either $sa$ or $sb$.
Now $P_w\cup P\cup \{e,se\}$ is a subgraph of $H_1$ which contains a path 
from $w$ to $v_1$. Thus $H_1$ is a connected spanning subgraph of $H$
with $|V(H)|-1$ edges and so $H_1$ is a spanning tree.

In both cases, no edge of $H$ is fixed by $s$ and so $(H,\theta)$ is an admissible pair. 
\endproof

\begin{remark}
The $(\bZ_2,\theta)$ $0$- and $1$-extensions also feature in the proof of a symmetric version of Laman's theorem for bar-joint frameworks in the Euclidean plane \cite{schulze,BS4}. They are analogous to the (non-symmetric) $0$- and $1$-extensions (or Henneberg moves) used in Laman's original theorem for generic frameworks \cite{Lamanbib}. 
\end{remark}

\subsection{Modified $(\bZ_2,\theta)$ $1$-extensions}
The third graph extension in the construction scheme is a variation of the $(\bZ_2,\theta)$ $1$-extension.

\begin{definition}
\label{CsXextension}
Let $G$ be a simple graph which is $\bZ_2$-symmetric with respect to $\theta$. 
Suppose there exists a graph $H$, distinct vertices $v_{1},v_{2},v_{3}\in V(H)$ and an edge $e\in E(H)$ with the following properties.
\begin{enumerate}[(i)]
\item $V(G)=V(H)\cup \{v,sv\}$ where $v,sv\notin V(H)$ and $v\not=sv$.
\item $H$ is $\bZ_2$-symmetric with respect to $\theta$, $v_1$, $v_2$, $sv_1$, and $sv_2$ are distinct and $e=v_1(sv_2)$.
\item $E(G)=\big(E(H)\setminus \big\{e,se\big\}\big)\cup \big\{vv_{i},s(vv_i)\,|\,\, i=1,2,3\big\}$.
\end{enumerate}
Then  $G$ is said to be obtained from $H$ by applying a {\em modified $(\bZ_2,\theta)$ $1$-extension} (on the vertices $v_1,v_2,v_3$ and the edge $e$). See also Fig.~\ref{XExt}.
\end{definition}

\begin{figure}[htp]
\begin{center}
\begin{tikzpicture}[very thick,scale=0.9]
\tikzstyle{every node}=[circle, draw=black, fill=white, inner sep=0pt, minimum width=5pt];
\filldraw[fill=black!03!white, draw=black, thin, dashed](0,0)circle(1.62cm);
\node[rectangle, draw=black!03!white,fill=black!03!white](l1) at (-0.5,1.03) {$v_1$};
\node[rectangle, draw=black!03!white,fill=black!03!white](l2) at (0.5,1.05) {$sv_1$};
\node[rectangle, draw=black!03!white,fill=black!03!white](l1) at (-0.8,-0.23) {$v_2$};
\node[rectangle, draw=black!03!white,fill=black!03!white](l2) at (0.8,-0.23) {$sv_2$};
\node[rectangle, draw=black!03!white,fill=black!03!white](l1) at (-0.38,-1.2) {$v_3$};
\node[rectangle, draw=black!03!white,fill=black!03!white](l2) at (0.32,-1.18) {$sv_3$};
\node (p1) at (-0.5,0.7) {};
\node (p2) at (0.5,0.7) {};
\node (p3) at (-0.8,0.13) {};
\node (p4) at (0.8,0.13) {};
\node (p5) at (-0.3,-0.83) {};
\node (p6) at (0.3,-0.83) {};
\draw(p1)--(p4);
\draw(p2)--(p3);
\filldraw[fill=black!50!white, draw=black, thick]
    (2.45,0) -- (3.05,0) -- (3.05,-0.1) -- (3.25,0.05) -- (3.05,0.2) -- (3.05,0.1) -- (2.45,0.1) -- (2.45,0);
\end{tikzpicture}
\hspace{0.5cm}
\begin{tikzpicture}[very thick,scale=0.9]
\tikzstyle{every node}=[circle, draw=black, fill=white, inner sep=0pt, minimum width=5pt];
\filldraw[fill=black!03!white, draw=black, thin, dashed](0,0)circle(1.62cm);
\node[rectangle, draw=black!03!white,fill=black!03!white](l1) at (-0.5,1.03) {$v_1$};
\node[rectangle, draw=black!03!white,fill=black!03!white](l2) at (0.5,1.05) {$sv_1$};
\node[rectangle, draw=black!03!white,fill=black!03!white](l1) at (-0.7,-0.23) {$v_2$};
\node[rectangle, draw=black!03!white,fill=black!03!white](l2) at (0.5,-0.23) {$sv_2$};
\node[rectangle, draw=black!03!white,fill=black!03!white](l1) at (-0.38,-1.2) {$v_3$};
\node[rectangle, draw=black!03!white,fill=black!03!white](l2) at (0.32,-1.18) {$sv_3$};
\node (p1) at (-0.5,0.7) {};
\node (p2) at (0.5,0.7) {};
\node (p3) at (-0.8,0.13) {};
\node (p4) at (0.8,0.13) {};
\node (p5) at (-0.3,-0.83) {};
\node (p6) at (0.3,-0.83) {};
\node (p7) at (-2,0) {};
\node (p8) at (2,0) {};
\node[rectangle, draw=white,fill=white](l1) at (-2.34,0) {$v$};
\node[rectangle, draw=white,fill=white](l2) at (2.38,0) {$sv$};
\draw(p7)--(p1);
\draw(p7)--(p3);
\draw(p7)--(p5);
\draw(p8)--(p2);
\draw(p8)--(p4);
\draw(p8)--(p6);
\end{tikzpicture} 
\end{center}
\vspace{-0.2cm}
\caption{\emph{A modified $(\bZ_2,\theta)$ $1$-extension $G$ of a graph $H$.}}
\label{XExt}
\end{figure}
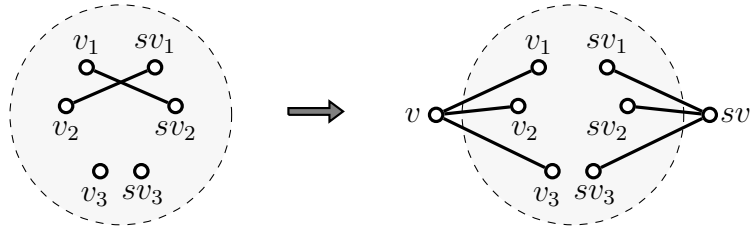

With the $(\bZ_2,\theta)$ $1$-extension and modified $(\bZ_2,\theta)$ $1$-extension it is possible to accommodate $3$-valent vertices $v$ for which $N(v)$ and $N(sv)$ are either disjoint or intersect in a single vertex. These two cases are considered separately.

\begin{lemma}
\label{0VertexLemma}
Let $(G,\theta)$ be an admissible pair and let $v$ be a $3$-valent vertex in $G$.
Suppose $N(v)$ and $N(sv)$ are disjoint. 
Then  there exists a graph $H$ with the following properties. 
\begin{enumerate}[(i)]
\item $G$ is obtained from $H$ by applying either a $(\bZ_2,\theta)$ $1$-extension or  a modified $(\bZ_2,\theta)$ $1$-extension.
\item $(H,\theta)$ is an admissible pair. 
\end{enumerate}
\end{lemma}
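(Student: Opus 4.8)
The plan is to follow the strategy of Lemma~\ref{2VertexLemma}: delete the orbit $\{v,sv\}$ together with its incident edges and then re-insert a carefully chosen edge orbit $\{e,se\}$ so as to restore a decomposition into two symmetric spanning trees. The feature that is new here is that deleting $\{v,sv\}$ from one of the two trees creates \emph{three} components rather than two, and this is precisely what makes an ordinary $(\bZ_2,\theta)$ $1$-extension sometimes fail and a modified one necessary.

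First I would record the local structure around $v$. Since $v$ is $3$-valent, Lemma~\ref{FixedVertexLemma} gives $v\neq v_0$, so $v\neq sv$; and since no edge of $G$ is fixed by $s$, there is no edge $v(sv)$, so $v$ and $sv$ are non-adjacent. If some $v_i$ or $sv_i$ equalled $v_0$ then $v_0=sv_0$ would lie in $N(v)\cap N(sv)$, contrary to hypothesis, so $v_1,v_2,v_3,sv_1,sv_2,sv_3$ are six distinct vertices, none equal to $v_0$. Express $G=G_1\cup G_2$ as an edge-disjoint union of two $\bZ_2$-symmetric spanning trees. Each $G_i$ is a tree containing $v$ and hence has at least one edge at $v$, so one of them has exactly two edges at $v$ and the other exactly one; after relabelling I may assume $vv_1,vv_2\in E(G_1)$ and $vv_3\in E(G_2)$, whence $(sv)(sv_1),(sv)(sv_2)\in E(G_1)$ and $(sv)(sv_3)\in E(G_2)$ by symmetry.

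Next I would analyse the forest $F:=G_1\setminus\{v,sv\}$. Since $v$ and $sv$ are distinct, non-adjacent, degree-$2$ vertices of the tree $G_1$, $F$ has exactly three components, and $s$ permutes them. A component fixed setwise by $s$ is a tree on which $s$ acts as an involution with no fixed edge, so it has a fixed vertex, which must be $v_0$; hence at most one component is $s$-fixed, and since an involution of a three-element set has one or three fixed points, exactly one component is $s$-fixed, say $C_0$ (so $v_0\in C_0$), while the other two, $C_1$ and $C_2$, are interchanged by $s$. Because $vv_1,vv_2\in E(G_1)$, the vertices $v_1,v_2$ lie in different components of $F$ (otherwise $G_1$ contains a cycle through $v$). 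Moreover $G_1$ is connected, so it has an edge joining $C_0$ to the rest of $G_1$; this edge is not in $F$, hence it is incident to $v$ or to $sv$, so one of $v_1,v_2,sv_1,sv_2$ lies in $C_0$, and since $sv_i\in C_0$ if and only if $v_i\in C_0$, one of $v_1,v_2$ lies in $C_0$. Relabelling ($v_1\leftrightarrow v_2$ and $C_1\leftrightarrow C_2$ if necessary), I may assume $v_1,sv_1\in C_0$, $v_2\in C_1$ and $sv_2\in C_2$.

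Now I choose the extension. As $v_1$ and $v_2$ lie in different components of $F$, neither $v_1v_2$ nor $v_1(sv_2)$ is an edge of $G_1$. If $v_1v_2\in E(G)$ then $v_1v_2\in E(G_2)$, and if moreover $v_1(sv_2)\in E(G)$ then $v_1(sv_2)\in E(G_2)$ too, so $v_1v_2,\,v_2(sv_1),\,(sv_1)(sv_2),\,(sv_2)v_1$ are four edges of $G_2$ forming a $4$-cycle on the distinct vertices $v_1,v_2,sv_1,sv_2$, contradicting that $G_2$ is a tree. Therefore at least one of $v_1v_2$, $v_1(sv_2)$ is not an edge of $G$; I set $e:=v_1v_2$ in that case and $e:=v_1(sv_2)$ otherwise, put $se:=s(e)$, and let $H$ be obtained from $G\setminus\{v,sv\}$ by adjoining the new edges $e$ and $se$ (note $e\neq se$). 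Then $G$ is obtained from $H$ by a $(\bZ_2,\theta)$ $1$-extension (Definition~\ref{Cs2extension}, Figure~\ref{2Ext}) in the first case and by a modified $(\bZ_2,\theta)$ $1$-extension (Definition~\ref{CsXextension}, Figure~\ref{XExt}) in the second, on the vertices $v_1,v_2,v_3$ and the edge $e$. To see that $(H,\theta)$ is admissible, set $H_2:=G_2\cap H$, which is a $\bZ_2$-symmetric spanning tree of $H$ because $v$ and $sv$ are leaves of $G_2$, and $H_1:=(G_1\cap H)\cup\{e,se\}$; here $G_1\cap H=F$ has components $C_0,C_1,C_2$, while in either choice $\{e,se\}$ consists of an edge joining $C_0$ to $C_1$ and an edge joining $C_0$ to $C_2$, so $H_1$ is connected with $|V(H)|-1$ edges, hence a $\bZ_2$-symmetric spanning tree; $H_1$ and $H_2$ are edge-disjoint with union $H$, and no edge of $H$ is fixed by $s$ (the edges inherited from $G$ by admissibility of $G$, and $e,se$ because $e\neq se$). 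Thus $(H,\theta)$ is admissible.

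I expect the main obstacle to be the component bookkeeping in the third paragraph — specifically, seeing that connectedness of $G_1$ rules out the configuration in which $v_1,v_2$ (and hence $sv_1,sv_2$) all lie in the two swapped components $C_1,C_2$; in that situation every candidate re-insertion edge would join $C_1$ to $C_2$ and leave $C_0$ isolated, so neither kind of $1$-extension would work. Once that configuration is excluded the rest is routine: the only remaining subtlety, that $v_1v_2$ may already be an edge, is dispatched by the short $4$-cycle argument, and the verification that $G$ arises from $H$ by the stated move and that $(H,\theta)$ is admissible is a direct check against the definitions.
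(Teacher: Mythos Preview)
Your proof is correct and follows the same overall strategy as the paper: delete $\{v,sv\}$, adjoin $e=v_1v_2$ if that edge is absent and $e=v_1(sv_2)$ otherwise, and verify that $H_1=(G_1\cap H)\cup\{e,se\}$ and $H_2=G_2\cap H$ are symmetric spanning trees. The paper, however, does not carry out your three-component analysis of $F=G_1\setminus\{v,sv\}$; it simply writes that ``the arguments of Lemma~\ref{2VertexLemma} may be applied'' to show $H_1$ and $H_2$ are spanning trees. Your decomposition into $C_0,C_1,C_2$ and the observation that connectedness of $G_1$ forces one of $v_1,v_2$ into the $s$-invariant component $C_0$ makes explicit the point that is only implicit in that deferral---namely, why the inserted orbit $\{e,se\}$ always reconnects all three pieces rather than leaving $C_0$ stranded. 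Your relabelling so that specifically $v_1\in C_0$ is a harmless convenience (the argument works either way once one of the two lies in $C_0$), and your $4$-cycle argument that $v_1v_2,\,v_1(sv_2)$ cannot both be edges of $G$ matches the paper's one-line justification that ``otherwise either $G_1$ or $G_2$ would contain a cycle''. In short: same proof, but your component bookkeeping is more careful than the paper's appeal to the earlier lemma.
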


\proof
The graph $G$ may be expressed as a union of two edge-disjoint $\bZ_2$-symmetric spanning trees $G_1$ and $G_2$.
Let $N(v)=\{v_1,v_2,v_3\}$ and without loss of generality suppose that the edges $vv_1$ and $vv_2$ are both contained in the same spanning tree, $G_1$ say.
Then either $v_1v_2\notin E(G)$ or $v_1v_2\in E(G)$ and this edge belongs to $G_2$
(see Fig. \ref{Degree3Vertex}).
\begin{figure}[htp]
\begin{center}
\begin{tikzpicture}[very thick,scale=0.9]
\tikzstyle{every node}=[circle, draw=black, fill=white, inner sep=0pt, minimum width=5pt];
\filldraw[fill=black!03!white, draw=black, thin, dashed](0,0)circle(1.62cm);
\node[rectangle, draw=black!03!white,fill=black!03!white](l1) at (-0.5,1.03) {$v_1$};
\node[rectangle, draw=black!03!white,fill=black!03!white](l2) at (0.5,1.05) {$sv_1$};
\node[rectangle, draw=black!03!white,fill=black!03!white](l1) at (-0.7,-0.23) {$v_2$};
\node[rectangle, draw=black!03!white,fill=black!03!white](l2) at (0.5,-0.23) {$sv_2$};
\node[rectangle, draw=black!03!white,fill=black!03!white](l1) at (-0.38,-1.2) {$v_3$};
\node[rectangle, draw=black!03!white,fill=black!03!white](l2) at (0.32,-1.2) {$sv_3$};
\node (p1) at (-0.4,0.75) {};
\node (p2) at (0.4,0.75) {};
\node (p3) at (-0.8,0.1) {};
\node (p4) at (0.8,0.1) {};
\node (p5) at (-0.3,-0.83) {};
\node (p6) at (0.3,-0.83) {};
\node (p7) at (-2,0) {};
\node (p8) at (2,0) {};
\node[rectangle, draw=white,fill=white,below right](l1) at (-2.25,-0.2) {$v$};
\node[rectangle, draw=white,fill=white,below left](l2) at (2.3,-0.2) {$sv$};
\draw(p7)--(p1);
\draw(p7)--(p3);
\draw[lightgray](p7)--(p5);
\draw(p8)--(p2);
\draw(p8)--(p4);
\draw[lightgray](p8)--(p6);
\end{tikzpicture}
\hspace{0.8cm}
\begin{tikzpicture}[very thick,scale=0.9]
\tikzstyle{every node}=[circle, draw=black, fill=white, inner sep=0pt, minimum width=5pt];
\filldraw[fill=black!03!white, draw=black, thin, dashed](0,0)circle(1.62cm);
\node[rectangle, draw=black!03!white,fill=black!03!white](l1) at (-0.5,1.03) {$v_1$};
\node[rectangle, draw=black!03!white,fill=black!03!white](l2) at (0.5,1.05) {$sv_1$};
\node[rectangle, draw=black!03!white,fill=black!03!white](l1) at (-0.7,-0.23) {$v_2$};
\node[rectangle, draw=black!03!white,fill=black!03!white](l2) at (0.5,-0.23) {$sv_2$};
\node[rectangle, draw=black!03!white,fill=black!03!white](l1) at (-0.38,-1.2) {$v_3$};
\node[rectangle, draw=black!03!white,fill=black!03!white](l2) at (0.32,-1.2) {$sv_3$};
\node (p1) at (-0.4,0.75) {};
\node (p2) at (0.4,0.75) {};
\node (p3) at (-0.8,0.1) {};
\node (p4) at (0.8,0.1) {};
\node (p5) at (-0.3,-0.83) {};
\node (p6) at (0.3,-0.83) {};
\node (p7) at (-2,0) {};
\node (p8) at (2,0) {};
\node[rectangle, draw=white,fill=white,below right](l1) at (-2.25,-0.2) {$v$};
\node[rectangle, draw=white,fill=white,below left](l2) at (2.3,-0.2) {$sv$};
\draw(p7)--(p1);
\draw(p7)--(p3);
\draw[lightgray](p7)--(p5);
\draw(p8)--(p2);
\draw(p8)--(p4);
\draw[lightgray](p8)--(p6);
\draw[lightgray](p1)--(p3);
\draw[lightgray](p2)--(p4);
\end{tikzpicture}
\end{center}
\vspace{-0.2cm}
\caption{$N(v)$ and $N(sv)$ are disjoint and $vv_1$ and $vv_2$ belong to the same spanning tree.}
\label{Degree3Vertex}
\end{figure}
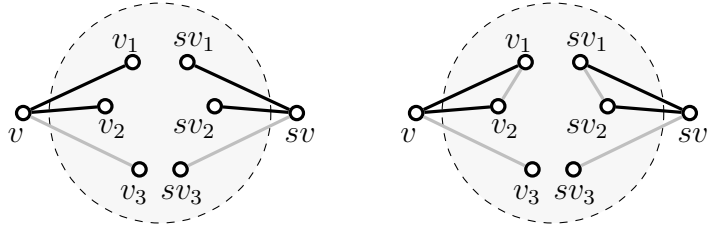
If $v_1v_2\notin E(G)$ then let $e=v_1v_2$.
If $v_1v_2\in E(G)$ then let $e=v_1(sv_2)$ and 
note that $G$ does not contain the edges $e$ and $se$ since, otherwise, 
either $G_1$ or $G_2$ would contain a cycle.
Let $H$ be the graph obtained by adjoining the edges $e$ and $se$ to $G\backslash\{v,sv\}$.
If $v_1v_2\notin E(G)$ then  $G$ is obtained from $H$ by applying a $(\bZ_2,\theta)$ $1$-extension on the vertices $v_1$, $v_2$, $v_3$ and  the edge $e$. 
If $v_1v_2\in E(G)$ then  $G$ is obtained from $H$ by applying a modified $(\bZ_2,\theta)$ $1$-extension on the vertices $v_1$, $v_2$, $v_3$ and  the edge $e$. 
In both cases, the arguments of Lemma \ref{2VertexLemma} may be applied to show that $H_1=(G_1\cap H)\cup\{e,se\}$ and $H_2=G_2\cap H$ are edge-disjoint $\bZ_2$-symmetric spanning trees in $H$. 
It follows that $(H,\theta)$ is an admissible pair.
\endproof

In proving the preceding lemmas, the spanning trees $H_1$ and $H_2$ were constructed directly from the subgraphs $G_1\cap H$ and $G_2\cap H$.
If $N(v)$ and $N(sv)$ intersect in one vertex then this direct construction is not always possible, as shown in the proof of the following lemma.  

\begin{lemma}
\label{1VertexLemma}
Let $(G,\theta)$ be an admissible pair and let $v$ be a $3$-valent vertex in $G$.
Suppose $N(v)$ and $N(sv)$ intersect in one vertex. 
Then  there exists a graph $H$ with the following properties. 
\begin{enumerate}[(i)]
\item $G$ may be obtained from $H$ by applying either a $(\bZ_2,\theta)$ $1$-extension or  a modified $(\bZ_2,\theta)$ $1$-extension.
\item $(H,\theta)$ is an admissible pair. 
\end{enumerate}
\end{lemma}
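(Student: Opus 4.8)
The plan is to follow the strategy of Lemmas~\ref{2VertexLemma} and~\ref{0VertexLemma}: find an edge $e$ with $e\not=se$ such that $H:=(G\setminus\{v,sv\})\cup\{e,se\}$ is admissible and $G$ is recovered from $H$ by a $(\bZ_2,\theta)$ $1$-extension or modified $1$-extension. By Lemma~\ref{NbdLemma} the unique vertex of $N(v)\cap N(sv)$ is the fixed vertex $v_0$, and by Lemma~\ref{FixedVertexLemma} the vertices $v,sv,v_0$ are distinct; writing $N(v)=\{v_0,v_1,v_2\}$, the vertices $v_0,v_1,v_2,sv_1,sv_2,v,sv$ are then pairwise distinct (an edge joining $v$ to $sv$, or $v_1\in\{sv_1,sv_2\}$, would produce a fixed edge or a second fixed vertex). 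Fix edge-disjoint $\bZ_2$-symmetric spanning trees $G_1,G_2$ with $G=G_1\cup G_2$. Two of the three edges at $v$ lie in one of these trees, say $G_1$, and after relabelling $v_1,v_2$ we are in one of the cases $\{vv_0,vv_1\}\subseteq E(G_1)$ or $\{vv_1,vv_2\}\subseteq E(G_1)$. In both cases $v$ and $sv$ are leaves of $G_2$, so $H_2:=G_2\setminus\{v,sv\}$ is a spanning tree of $H$ for every admissible choice of $e$, and the problem reduces to choosing $e$ so that $G_1\setminus\{v,sv\}$ can be repaired.

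Deleting $v$ and $sv$, which have degree $2$ in $G_1$, from the tree $G_1$ leaves a forest with exactly three components, and $s$ fixes the component containing $v_0$ and swaps the other two. When $\{vv_1,vv_2\}\subseteq E(G_1)$, the component of $v_0$ also contains $v_1$ and $sv_1$ (it must meet a $G_1$-edge at $v$ or $sv$ to remain joined to the rest of $G_1$), while $v_2$ and $sv_2$ lie in the two swapped components; one then checks that at least one of $e\in\{v_0v_2,\,v_1v_2,\,v_1(sv_2)\}$ avoids $E(G)$ (three of these edges present in $G$ would give a $4$-cycle in $G_2$) and that for this $e$ the pair $\{e,se\}$ reconnects the three components, so $H_1:=(G_1\setminus\{v,sv\})\cup\{e,se\}$ and $H_2$ exhibit $H$ as admissible, with $G$ a $1$-extension of $H$ (modified if $e=v_1(sv_2)$). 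When $\{vv_0,vv_1\}\subseteq E(G_1)$ the three components are $X\ni v_1$, $Y\ni v_0$, $Z\ni sv_1$ with $sX=Z$; here $\{v_0v_1,v_0(sv_1)\}$ reconnects them no matter which component contains $v_2$, so $e=v_0v_1$ works provided $v_0v_1\notin E(G)$, and a short further analysis shows $e=v_0v_2$ works when $v_2\notin Y$ and $v_0v_2\notin E(G)$, while $e\in\{v_1v_2,v_1(sv_2)\}$ works when $v_2\in Y$. The one configuration not covered is $\{vv_0,vv_1\}\subseteq E(G_1)$ with $v_2\notin Y$ and $v_0v_1,v_0v_2\in E(G)$.

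In this exceptional configuration $v_0v_1,v_0(sv_1),v_0v_2,v_0(sv_2)\in E(G_2)$, since each joins distinct components of $G_1\setminus\{v,sv\}$, and $v_1v_2\notin E(G)$ (it cannot lie in $G_1$ by the component structure, nor in $G_2$, which already contains the path $v_1v_0v_2$). I would take $e=v_1v_2$, so that $H=(G\setminus\{v,sv\})\cup\{v_1v_2,(sv_1)(sv_2)\}$, and define the two trees of $H$ in a crossed fashion:
\[
H_1=(G_1\setminus\{v,sv\})\cup\{v_0v_1,\,v_0(sv_1)\},\qquad
H_2=\bigl((G_2\setminus\{v,sv\})\cup\{v_1v_2,\,(sv_1)(sv_2)\}\bigr)\setminus\{v_0v_1,\,v_0(sv_1)\}.
\]
Then $H_1$ has $|V(H)|-1$ edges and is connected, because $v_0v_1$ joins $Y$ to $X$ and $v_0(sv_1)$ joins $Y$ to $Z$, so $H_1$ is a spanning tree; and $H_2$ has $|V(H)|-1$ edges and is connected, because deleting $v_0v_1$ and $v_0(sv_1)$ from the spanning tree $G_2\setminus\{v,sv\}$ produces three components, distinguished by which of $v_0,v_1,sv_1$ they contain (with $v_2$ and $sv_2$ in the component of $v_0$), and the re-inserted edges $v_1v_2$ and $(sv_1)(sv_2)$ rejoin the components of $v_1$ and $sv_1$ to it. Both $H_1$ and $H_2$ are $\bZ_2$-symmetric and edge-disjoint, and no edge of $H$ is fixed by $s$, so $(H,\theta)$ is admissible and $G$ is obtained from $H$ by a $(\bZ_2,\theta)$ $1$-extension on the vertices $v_1,v_2,v_0$ and the edge $v_1v_2$.

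The main obstacle is exactly this exceptional configuration. The direct recipe of Lemmas~\ref{2VertexLemma} and~\ref{0VertexLemma} --- keep $G_2\setminus\{v,sv\}$ intact and repair $G_1\setminus\{v,sv\}$ using the symmetric pair $\{e,se\}$ --- provably cannot work here, since the only edges among the neighbours of $v$ whose $s$-orbit reconnects all three components of $G_1\setminus\{v,sv\}$ already lie in $G_2$. The point is that one can instead push the new pair $v_1v_2,(sv_1)(sv_2)$ into the second tree while pulling $v_0v_1,v_0(sv_1)$ out of it into the first, and that the $s$-invariance of the component of $v_0$ is precisely what keeps both of the resulting graphs connected; verifying this is the heart of the argument.
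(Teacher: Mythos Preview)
Your approach is essentially the paper's, reorganised around the component decomposition $X\cup Y\cup Z$ of $G_1\setminus\{v,sv\}$ rather than around a path $P$ from $v_3$; the crossed-tree repair in your exceptional configuration is exactly the paper's recolouring of $g=v_0v_j$ and $sg$ in its Cases~(2)--(3). However, there is a genuine gap in your treatment of that configuration.

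You assert that $v_1v_2\notin E(G)$ because ``it cannot lie in $G_1$ by the component structure''. But in the exceptional configuration you only know $v_2\notin Y$, so $v_2$ may lie in $X$ (the component of $v_1$), in which case nothing forbids $v_1v_2\in E(G_1)$. This situation actually occurs: it is precisely the paper's Case~(2) when both $v_0v_3\in E(G)$ and $v_jv_3\in E(G)$ (the right-hand diagram in the fourth row of Fig.~\ref{Degree3VertexB}). Your edge $e=v_1v_2$ is then unavailable and your $H$ is not simple. The repair is to split $v_2\notin Y$ into $v_2\in Z$ and $v_2\in X$. When $v_2\in Z$ your argument is correct as written, giving a $1$-extension. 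When $v_2\in X$ one has $sv_2\in Z$, so $v_1(sv_2)$ joins $X$ to $Z$ and hence lies outside $G_1$, and it lies outside $G_2$ since $G_2$ already contains the path $v_1v_0(sv_2)$; taking $e=v_1(sv_2)$ instead, your crossed construction of $H_1,H_2$ goes through verbatim (the verification that $v_2,sv_2$ remain in the $v_0$-component of $(G_2\setminus\{v,sv\})\setminus\{v_0v_1,v_0(sv_1)\}$ uses only $v_0v_2,v_0(sv_2)\in E(G_2)$), and $G$ is recovered by a modified $(\bZ_2,\theta)$ $1$-extension rather than a $1$-extension.
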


\proof
Let $N(v)=\{v_1,v_2,v_3\}$ and note that, by Lemma \ref{NbdLemma}, $N(v)$ and $N(sv)$ must intersect in the unique vertex $v_0$ which is fixed by $s$.
Thus $v_i=v_0$ for some $i\in\{1,2,3\}$.
The graph $G$ may be expressed as a union of two edge-disjoint $\bZ_2$-symmetric spanning trees $G_1$ and $G_2$ and without loss of generality it may be assumed that the edges $vv_1$ and $vv_2$ are both edges of $G_1$.  
If $v_1v_2\notin E(G)$, or, $v_1v_2\in E(G)$ and $v_0=v_3$  then 
the proof proceeds by adapting the arguments of Lemma \ref{0VertexLemma}.
It remains to consider the case where $v_1v_2\in E(G)$ and $v_0\not= v_3$.
Note that $N(v)=\{v_0,v_j,v_3\}$ where either $v_0=v_1$ and $v_j=v_2$, or, $v_0=v_2$ and $v_j=v_1$. Thus $\S(v)$ is one of the graphs in Fig. \ref{Degree3VertexB}.
Since $G_1$ is a spanning tree in $G$ there must exist a path $P$ in $G_1\backslash\{v,sv\}$ from $v_3$ to exactly one of the vertices $v_0$, $v_j$ or $sv_j$. These three possible cases are considered separately.

{\bf Case (1)}
Suppose $P$ joins $v_3$ to the fixed vertex $v_0$.
Then $G_1\backslash\{v,sv\}$ does not contain paths which join either $v_3$ to $v_j$, or, $v_3$ to $sv_j$ since such a path would imply the existence of a cycle in $G_1$.
In particular, if $e=v_jv_3$ or $f=(sv_j)v_3$ is an edge of $G$ then this edge must belong to $G_2$.
It follows that $e$ and  $f$ are not both edges of $G$ since, if they were, then $G_2$ would contain a cycle with edges $e$, $f$, $se$ and $sf$.
From these observations, it follows that $\S(v)$ must appear in the first three rows of Fig. \ref{Degree3VertexB}.
If $e\notin E(G)$ then let $H$ be the graph obtained by adjoining the edges $e$ and $se$  to $G\backslash\{v,sv\}$ and
let $H_1=(G_1\cap H)\cup\{e,se\}$ and $H_2=G_2\cap H$. 
If $e\in E(G)$ then $f\notin E(G)$ and $H$ is constructed by adjoining the edges $f$ and $sf$ to $G\backslash\{v,sv\}$.
Let $H_1=(G_1\cap H)\cup\{f,sf\}$ and $H_2=G_2\cap H$.

{\bf Case (2)}
Suppose $P$ joins $v_3$ to $v_j$.
Then $G_1\backslash\{v,sv\}$ does not contain paths which join either $v_3$ to $v_0$, or, $v_3$ to $sv_j$.
In particular, either $e=v_0v_3\notin E(G)$ or $f=(sv_j)v_3\notin E(G)$, since otherwise both edges belong to $G_2$ and this creates a cycle in $G_2$.
Thus $\S(v)$ appears in the first, second or fourth row of Fig. \ref{Degree3VertexB}.
If $e\notin E(G)$ then  let $H$ be the graph obtained by adjoining the edges $e$ and $se$  to $G\backslash\{v,sv\}$ and 
let $H_1=(G_1\cap H)\cup\{e,se\}$ and $H_2=G_2\cap H$. 
If $e\in E(G)$ then $f\notin E(G)$ and $e$ is contained in the spanning tree $G_2$.
Let $H$ be the graph  obtained by adjoining the edges $f$ and $sf$  to $G\backslash\{v,sv\}$.
Note that the subgraph $(G_1\cap H)\cup\{f,sf\}$ must contain a cycle and so, instead, adjoin the edges $f$ and $sf$ to $G_2\cap H$ and re-colour the edges 
$g=v_0v_j$ and $sg$, both of which are contained in $G_2$. 
Thus, let $H_1=(G_1\cap H)\cup\{g,sg\}$ and $H_2=((G_2\cap H)\backslash\{g,sg\})\cup\{f,sf\}$.

\begin{figure}[htp]
\begin{tabular}{c}
\begin{tikzpicture}[very thick,scale=1]
\tikzstyle{every node}=[circle, draw=black, fill=white, inner sep=0pt, minimum width=5pt];
\filldraw[fill=black!03!white, draw=black, thin, dashed](0,0)circle(1.62cm);
\node[rectangle, draw=black!03!white,fill=black!03!white](l1) at (0,1.1) {$v_1=v_0$};

\node[rectangle, draw=black!03!white,fill=black!03!white](l1) at (-0.7,-0.23) {$v_2$};
\node[rectangle, draw=black!03!white,fill=black!03!white](l2) at (0.5,-0.23) {$sv_2$};
\node[rectangle, draw=black!03!white,fill=black!03!white](l1) at (-0.38,-1.2) {$v_3$};
\node[rectangle, draw=black!03!white,fill=black!03!white](l2) at (0.32,-1.2) {$sv_3$};
\node (p1) at (0,0.85) {};
\node (p3) at (-0.8,0.1) {};
\node (p4) at (0.8,0.1) {};
\node (p5) at (-0.3,-0.83) {};
\node (p6) at (0.3,-0.83) {};
\node (p7) at (-2,0) {};
\node (p8) at (2,0) {};
\node[rectangle, draw=white,fill=white,below right](l1) at (-2.25,-0.2) {$v$};
\node[rectangle, draw=white,fill=white,below left](l2) at (2.3,-0.2) {$sv$};
\draw(p7)--(p1);
\draw(p7)--(p3);
\draw[lightgray](p7)--(p5);
\draw(p8)--(p1);
\draw(p8)--(p4);
\draw[lightgray](p8)--(p6);
\draw[lightgray](p1)--(p3);
\draw[lightgray](p1)--(p4);
\end{tikzpicture}

\hspace{0.7cm}
\begin{tikzpicture}[very thick,scale=1]
\tikzstyle{every node}=[circle, draw=black, fill=white, inner sep=0pt, minimum width=5pt];
\filldraw[fill=black!03!white, draw=black, thin, dashed](0,0)circle(1.62cm);
\node[rectangle, draw=black!03!white,fill=black!03!white](l1) at (-0.5,1.03) {$v_1$};
\node[rectangle, draw=black!03!white,fill=black!03!white](l2) at (0.5,1.05) {$sv_1$};
\node[rectangle, draw=black!03!white,fill=black!03!white](l2) at (0.44,-0.35) {$v_2\, = v_0$};
\node[rectangle, draw=black!03!white,fill=black!03!white](l1) at (-0.38,-1.2) {$v_3$};
\node[rectangle, draw=black!03!white,fill=black!03!white](l2) at (0.5,-1.2) {$sv_3$};
\node (p1) at (-0.7,0.75) {};
\node (p2) at (0.7,0.75) {};
\node (p3) at (0,0.1) {};
\node (p5) at (-0.6,-0.83) {};
\node (p6) at (0.6,-0.83) {};
\node (p7) at (-2,0) {};
\node (p8) at (2,0) {};
\node[rectangle, draw=white,fill=white,below right](l1) at (-2.25,-0.2) {$v$};
\node[rectangle, draw=white,fill=white,below left](l2) at (2.3,-0.2) {$sv$};
\draw(p7)--(p1);
\draw(p7)--(p3);
\draw[lightgray](p7)--(p5);
\draw(p8)--(p2);
\draw(p8)--(p3);
\draw[lightgray](p8)--(p6);
\draw[lightgray](p5)--(p3);
\draw[lightgray](p6)--(p3);
\draw[lightgray](p1)--(p3);
\draw[lightgray](p2)--(p3);
\end{tikzpicture} 
\\~\\
\begin{tikzpicture}[very thick,scale=1]
\tikzstyle{every node}=[circle, draw=black, fill=white, inner sep=0pt, minimum width=5pt];
\filldraw[fill=black!03!white, draw=black, thin, dashed](0,0)circle(1.62cm);
\node[rectangle, draw=black!03!white,fill=black!03!white](l1) at (0,1.1) {$v_1=v_0$};
\node[rectangle, draw=black!03!white,fill=black!03!white,right](l1) at (-0.65,-0.05) {$v_2$};
\node[rectangle, draw=black!03!white,fill=black!03!white,left](l2) at (0.6,-0.05) {$sv_2$};
\node[rectangle, draw=black!03!white,fill=black!03!white](l1) at (-0.38,-1.2) {$v_3$};
\node[rectangle, draw=black!03!white,fill=black!03!white](l2) at (0.32,-1.2) {$sv_3$};
\node (p1) at (0,0.85) {};
\node (p3) at (-0.8,0.1) {};
\node (p4) at (0.8,0.1) {};
\node (p5) at (-0.3,-0.83) {};
\node (p6) at (0.3,-0.83) {};
\node (p7) at (-2,0) {};
\node (p8) at (2,0) {};
\node[rectangle, draw=white,fill=white,below right](l1) at (-2.25,-0.2) {$v$};
\node[rectangle, draw=white,fill=white,below left](l2) at (2.3,-0.2) {$sv$};
\draw(p7)--(p1);
\draw(p7)--(p3);
\draw[lightgray](p7)--(p5);
\draw(p8)--(p1);
\draw(p8)--(p4);
\draw[lightgray](p8)--(p6);
\draw[lightgray](p5)--(p3);
\draw[lightgray](p6)--(p4);
\draw[lightgray](p1)--(p3);
\draw[lightgray](p1)--(p4);
\end{tikzpicture} 
\hspace{0.7cm}
\begin{tikzpicture}[very thick,scale=1]
\tikzstyle{every node}=[circle, draw=black, fill=white, inner sep=0pt, minimum width=5pt];
\filldraw[fill=black!03!white, draw=black, thin, dashed](0,0)circle(1.62cm);
\node[rectangle, draw=black!03!white,fill=black!03!white](l1) at (0,1.1) {$v_1=v_0$};
\node[rectangle, draw=black!03!white,fill=black!03!white,left](l1) at (-0.7,-0.23) {$v_2$};
\node[rectangle, draw=black!03!white,fill=black!03!white,right](l2) at (0.7,-0.23) {$sv_2$};
\node[rectangle, draw=black!03!white,fill=black!03!white](l1) at (-0.38,-1.2) {$v_3$};
\node[rectangle, draw=black!03!white,fill=black!03!white](l2) at (0.32,-1.2) {$sv_3$};
\node (p1) at (0,0.85) {};
\node (p3) at (-0.8,0.1) {};
\node (p4) at (0.8,0.1) {};
\node (p5) at (-0.3,-0.83) {};
\node (p6) at (0.3,-0.83) {};
\node (p7) at (-2,0) {};
\node (p8) at (2,0) {};
\node[rectangle, draw=white,fill=white,below right](l1) at (-2.25,-0.2) {$v$};
\node[rectangle, draw=white,fill=white,below left](l2) at (2.3,-0.2) {$sv$};
\draw(p7)--(p1);
\draw(p7)--(p3);
\draw[lightgray](p7)--(p5);
\draw(p8)--(p1);
\draw(p8)--(p4);
\draw[lightgray](p8)--(p6);
\draw[lightgray](p3)--(p6);
\draw[lightgray](p4)--(p5);
\draw[lightgray](p1)--(p3);
\draw[lightgray](p1)--(p4);
\end{tikzpicture}
\\~\\
\begin{tikzpicture}[very thick,scale=1]
\tikzstyle{every node}=[circle, draw=black, fill=white, inner sep=0pt, minimum width=5pt];
\filldraw[fill=black!03!white, draw=black, thin, dashed](0,0)circle(1.62cm);
\node[rectangle, draw=black!03!white,fill=black!03!white](l1) at (-0.5,1.03) {$v_1$};
\node[rectangle, draw=black!03!white,fill=black!03!white](l2) at (0.5,1.05) {$sv_1$};
\node[rectangle, draw=black!03!white,fill=black!03!white](l2) at (0.44,-0.35) {$v_2\, = v_0$};
\node[rectangle, draw=black!03!white,fill=black!03!white](l1) at (-0.38,-1.2) {$v_3$};
\node[rectangle, draw=black!03!white,fill=black!03!white](l2) at (0.5,-1.2) {$sv_3$};
\node (p1) at (-0.7,0.75) {};
\node (p2) at (0.7,0.75) {};
\node (p3) at (0,0.1) {};
\node (p5) at (-0.6,-0.83) {};
\node (p6) at (0.6,-0.83) {};
\node (p7) at (-2,0) {};
\node (p8) at (2,0) {};
\node[rectangle, draw=white,fill=white,below right](l1) at (-2.25,-0.2) {$v$};
\node[rectangle, draw=white,fill=white,below left](l2) at (2.3,-0.2) {$sv$};
\draw(p7)--(p1);
\draw(p7)--(p3);
\draw[lightgray](p7)--(p5);
\draw(p8)--(p2);
\draw(p8)--(p3);
\draw[lightgray](p8)--(p6);
\draw(p5)--(p3);
\draw(p6)--(p3);
\draw[lightgray](p1)--(p3);
\draw[lightgray](p2)--(p3);
\end{tikzpicture}
\hspace{0.7cm}
\begin{tikzpicture}[very thick,scale=1]
\tikzstyle{every node}=[circle, draw=black, fill=white, inner sep=0pt, minimum width=5pt];
\filldraw[fill=black!03!white, draw=black, thin, dashed](0,0)circle(1.62cm);
\node[rectangle, draw=black!03!white,fill=black!03!white](l1) at (0,1.1) {$v_1=v_0$};
\node[rectangle, draw=black!03!white,fill=black!03!white,left](l1) at (-0.7,-0.23) {$v_2$};
\node[rectangle, draw=black!03!white,fill=black!03!white,right](l2) at (0.7,-0.23) {$sv_2$};
\node[rectangle, draw=black!03!white,fill=black!03!white](l1) at (-0.38,-1.2) {$v_3$};
\node[rectangle, draw=black!03!white,fill=black!03!white](l2) at (0.32,-1.2) {$sv_3$};
\node (p1) at (0,0.85) {};
\node (p3) at (-0.8,0.1) {};
\node (p4) at (0.8,0.1) {};
\node (p5) at (-0.3,-0.83) {};
\node (p6) at (0.3,-0.83) {};
\node (p7) at (-2,0) {};
\node (p8) at (2,0) {};
\node[rectangle, draw=white,fill=white,below right](l1) at (-2.25,-0.2) {$v$};
\node[rectangle, draw=white,fill=white,below left](l2) at (2.3,-0.2) {$sv$};
\draw(p7)--(p1);
\draw(p7)--(p3);
\draw[lightgray](p7)--(p5);
\draw(p8)--(p1);
\draw(p8)--(p4);
\draw[lightgray](p8)--(p6);
\draw[lightgray](p3)--(p5);
\draw[lightgray](p4)--(p6);
\draw(p1)--(p6);
\draw(p1)--(p5);
\draw[lightgray](p1)--(p3);
\draw[lightgray](p1)--(p4);
\end{tikzpicture}
\hspace{0.7cm}
 \begin{tikzpicture}[very thick,scale=1]
\tikzstyle{every node}=[circle, draw=black, fill=white, inner sep=0pt, minimum width=5pt];
\filldraw[fill=black!03!white, draw=black, thin, dashed](0,0)circle(1.62cm);
\node[rectangle, draw=black!03!white,fill=black!03!white](l1) at (0,1.1) {$v_1=v_0$};
\node[rectangle, draw=black!03!white,fill=black!03!white,left](l1) at (-0.7,-0.23) {$v_2$};
\node[rectangle, draw=black!03!white,fill=black!03!white,right](l2) at (0.7,-0.23) {$sv_2$};
\node[rectangle, draw=black!03!white,fill=black!03!white](l1) at (-0.38,-1.2) {$v_3$};
\node[rectangle, draw=black!03!white,fill=black!03!white](l2) at (0.32,-1.2) {$sv_3$};
\node (p1) at (0,0.85) {};
\node (p3) at (-0.8,0.1) {};
\node (p4) at (0.8,0.1) {};
\node (p5) at (-0.3,-0.83) {};
\node (p6) at (0.3,-0.83) {};
\node (p7) at (-2,0) {};
\node (p8) at (2,0) {};
\node[rectangle, draw=white,fill=white,below right](l1) at (-2.25,-0.2) {$v$};
\node[rectangle, draw=white,fill=white,below left](l2) at (2.3,-0.2) {$sv$};
\draw(p7)--(p1);
\draw(p7)--(p3);
\draw[lightgray](p7)--(p5);
\draw(p8)--(p1);
\draw(p8)--(p4);
\draw[lightgray](p8)--(p6);
\draw[lightgray](p3)--(p6);
\draw[lightgray](p4)--(p5);
\draw[lightgray](p1)--(p3);
\draw[lightgray](p1)--(p4);
\draw(p5)--(p1);
\draw(p6)--(p1);
\end{tikzpicture}
\\~\\
\begin{tikzpicture}[very thick,scale=1]
\tikzstyle{every node}=[circle, draw=black, fill=white, inner sep=0pt, minimum width=5pt];
\filldraw[fill=black!03!white, draw=black, thin, dashed](0,0)circle(1.62cm);
\node[rectangle, draw=black!03!white,fill=black!03!white](l1) at (0,1.1) {$v_1=v_0$};
\node[rectangle, draw=black!03!white,fill=black!03!white,right](l1) at (-0.65,-0.05) {$v_2$};
\node[rectangle, draw=black!03!white,fill=black!03!white,left](l2) at (0.6,-0.05) {$sv_2$};
\node[rectangle, draw=black!03!white,fill=black!03!white](l1) at (-0.38,-1.2) {$v_3$};
\node[rectangle, draw=black!03!white,fill=black!03!white](l2) at (0.32,-1.2) {$sv_3$};
\node (p1) at (0,0.85) {};
\node (p3) at (-0.8,0.1) {};
\node (p4) at (0.8,0.1) {};
\node (p5) at (-0.3,-0.83) {};
\node (p6) at (0.3,-0.83) {};
\node (p7) at (-2,0) {};
\node (p8) at (2,0) {};
\node[rectangle, draw=white,fill=white,below right](l1) at (-2.25,-0.2) {$v$};
\node[rectangle, draw=white,fill=white,below left](l2) at (2.3,-0.2) {$sv$};
\draw(p7)--(p1);
\draw(p7)--(p3);
\draw[lightgray](p7)--(p5);
\draw(p8)--(p1);
\draw(p8)--(p4);
\draw[lightgray](p8)--(p6);
\draw(p5)--(p3);
\draw(p6)--(p4);
\draw[lightgray](p1)--(p3);
\draw[lightgray](p1)--(p4);
\end{tikzpicture}
\hspace{0.7cm}
\begin{tikzpicture}[very thick,scale=1]
\tikzstyle{every node}=[circle, draw=black, fill=white, inner sep=0pt, minimum width=5pt];
\filldraw[fill=black!03!white, draw=black, thin, dashed](0,0)circle(1.62cm);
\node[rectangle, draw=black!03!white,fill=black!03!white](l1) at (0,1.13) {$v_1=v_0$};
\node[rectangle, draw=black!03!white,fill=black!03!white,left](l1) at (-0.7,-0.23) {$v_2$};
\node[rectangle, draw=black!03!white,fill=black!03!white,right](l2) at (0.7,-0.23) {$sv_2$};
\node[rectangle, draw=black!03!white,fill=black!03!white](l1) at (-0.38,-1.2) {$v_3$};
\node[rectangle, draw=black!03!white,fill=black!03!white](l2) at (0.32,-1.2) {$sv_3$};
\node (p1) at (0,0.85) {};
\node (p3) at (-0.8,0.1) {};
\node (p4) at (0.8,0.1) {};
\node (p5) at (-0.3,-0.83) {};
\node (p6) at (0.3,-0.83) {};
\node (p7) at (-2,0) {};
\node (p8) at (2,0) {};
\node[rectangle, draw=white,fill=white,below right](l1) at (-2.25,-0.2) {$v$};
\node[rectangle, draw=white,fill=white,below left](l2) at (2.3,-0.2) {$sv$};
\draw(p7)--(p1);
\draw(p7)--(p3);
\draw[lightgray](p7)--(p5);
\draw(p8)--(p1);
\draw(p8)--(p4);
\draw[lightgray](p8)--(p6);
\draw[lightgray](p1)--(p4);
\draw[lightgray](p1)--(p3);
\draw(p3)--(p5);
\draw(p4)--(p6);
\draw[lightgray](p3)--(p6);
\draw[lightgray](p4)--(p5);
\end{tikzpicture}

\hspace{0.7cm}
\begin{tikzpicture}[very thick,scale=1]
\tikzstyle{every node}=[circle, draw=black, fill=white, inner sep=0pt, minimum width=5pt];
\filldraw[fill=black!03!white, draw=black, thin, dashed](0,0)circle(1.62cm);
\node[rectangle, draw=black!03!white,fill=black!03!white](l1) at (0,1.1) {$v_1=v_0$};
\node[rectangle, draw=black!03!white,fill=black!03!white,below left](l1) at (-0.68,-0.1) {$v_2$};
\node[rectangle, draw=black!03!white,fill=black!03!white,below right](l2) at (0.68,-0.1) {$sv_2$};
\node[rectangle, draw=black!03!white,fill=black!03!white](l1) at (-0.38,-1.2) {$v_3$};
\node[rectangle, draw=black!03!white,fill=black!03!white](l2) at (0.32,-1.2) {$sv_3$};
\node (p1) at (0,0.85) {};
\node (p3) at (-0.8,0.1) {};
\node (p4) at (0.8,0.1) {};
\node (p5) at (-0.3,-0.83) {};
\node (p6) at (0.3,-0.83) {};
\node (p7) at (-2,0) {};
\node (p8) at (2,0) {};
\node[rectangle, draw=white,fill=white,below right](l1) at (-2.25,-0.2) {$v$};
\node[rectangle, draw=white,fill=white,below left](l2) at (2.3,-0.2) {$sv$};
\draw(p7)--(p1);
\draw(p7)--(p3);
\draw[lightgray](p7)--(p5);
\draw(p8)--(p1);
\draw(p8)--(p4);
\draw[lightgray](p8)--(p6);
\draw(p5)--(p3);
\draw(p6)--(p4);
\draw[lightgray](p1)--(p3);
\draw[lightgray](p1)--(p4);
\draw[lightgray](p1)--(p5);
\draw[lightgray](p1)--(p6);
\end{tikzpicture}
\\~\\
\begin{tikzpicture}[very thick,scale=1]
\tikzstyle{every node}=[circle, draw=black, fill=white, inner sep=0pt, minimum width=5pt];
\filldraw[fill=black!03!white, draw=black, thin, dashed](0,0)circle(1.62cm);
\node[rectangle, draw=black!03!white,fill=black!03!white](l1) at (0,1.1) {$v_1=v_0$};
\node[rectangle, draw=black!03!white,fill=black!03!white,left](l1) at (-0.7,-0.23) {$v_2$};
\node[rectangle, draw=black!03!white,fill=black!03!white,right](l2) at (0.7,-0.23) {$sv_2$};
\node[rectangle, draw=black!03!white,fill=black!03!white](l1) at (-0.38,-1.2) {$v_3$};
\node[rectangle, draw=black!03!white,fill=black!03!white](l2) at (0.32,-1.2) {$sv_3$};
\node (p1) at (0,0.85) {};
\node (p3) at (-0.8,0.1) {};
\node (p4) at (0.8,0.1) {};
\node (p5) at (-0.3,-0.83) {};
\node (p6) at (0.3,-0.83) {};
\node (p7) at (-2,0) {};
\node (p8) at (2,0) {};
\node[rectangle, draw=white,fill=white,below right](l1) at (-2.25,-0.2) {$v$};
\node[rectangle, draw=white,fill=white,below left](l2) at (2.3,-0.2) {$sv$};
\draw(p7)--(p1);
\draw(p7)--(p3);
\draw[lightgray](p7)--(p5);
\draw(p8)--(p1);
\draw(p8)--(p4);
\draw[lightgray](p8)--(p6);
\draw(p3)--(p6);
\draw(p4)--(p5);
\draw[lightgray](p1)--(p3);
\draw[lightgray](p1)--(p4);
\end{tikzpicture}
\hspace{0.7cm}
\begin{tikzpicture}[very thick,scale=1]
\tikzstyle{every node}=[circle, draw=black, fill=white, inner sep=0pt, minimum width=5pt];
\filldraw[fill=black!03!white, draw=black, thin, dashed](0,0)circle(1.62cm);
\node[rectangle, draw=black!03!white,fill=black!03!white](l1) at (0,1.13) {$v_1=v_0$};
\node[rectangle, draw=black!03!white,fill=black!03!white,left](l1) at (-0.7,-0.23) {$v_2$};
\node[rectangle, draw=black!03!white,fill=black!03!white,right](l2) at (0.7,-0.23) {$sv_2$};
\node[rectangle, draw=black!03!white,fill=black!03!white](l1) at (-0.38,-1.2) {$v_3$};
\node[rectangle, draw=black!03!white,fill=black!03!white](l2) at (0.32,-1.2) {$sv_3$};
\node (p1) at (0,0.85) {};
\node (p3) at (-0.8,0.1) {};
\node (p4) at (0.8,0.1) {};
\node (p5) at (-0.3,-0.83) {};
\node (p6) at (0.3,-0.83) {};
\node (p7) at (-2,0) {};
\node (p8) at (2,0) {};
\node[rectangle, draw=white,fill=white,below right](l1) at (-2.25,-0.2) {$v$};
\node[rectangle, draw=white,fill=white,below left](l2) at (2.3,-0.2) {$sv$};
\draw(p7)--(p1);
\draw(p7)--(p3);
\draw[lightgray](p7)--(p5);
\draw(p8)--(p1);
\draw(p8)--(p4);
\draw[lightgray](p8)--(p6);
\draw[lightgray](p1)--(p4);
\draw[lightgray](p1)--(p3);
\draw[lightgray](p3)--(p5);
\draw[lightgray](p4)--(p6);
\draw(p3)--(p6);
\draw(p4)--(p5);
\end{tikzpicture}
\hspace{0.7cm}
\begin{tikzpicture}[very thick,scale=1]
\tikzstyle{every node}=[circle, draw=black, fill=white, inner sep=0pt, minimum width=5pt];
\filldraw[fill=black!03!white, draw=black, thin, dashed](0,0)circle(1.62cm);
\node[rectangle, draw=black!03!white,fill=black!03!white](l1) at (-0.84,1.0) {$v_1$};
\node[rectangle, draw=black!03!white,fill=black!03!white](l2) at (0.84,1.0) {$sv_1$};
\node[rectangle, draw=black!03!white,fill=black!03!white,above](l1) at (0,0.64) {$v_2=v_0$};
\node[rectangle, draw=black!03!white,fill=black!03!white](l1) at (-0.38,-1.2) {$v_3$};
\node[rectangle, draw=black!03!white,fill=black!03!white](l2) at (0.32,-1.2) {$sv_3$};
\node (p1) at (-0.8,0.75) {};
\node (p2) at (0.8,0.75) {};
\node (p3) at (0,0.5) {};
\node (p5) at (-0.3,-0.83) {};
\node (p6) at (0.3,-0.83) {};
\node (p7) at (-2,0) {};
\node (p8) at (2,0) {};
\node[rectangle, draw=white,fill=white,below right](l1) at (-2.25,-0.2) {$v$};
\node[rectangle, draw=white,fill=white,below left](l2) at (2.3,-0.2) {$sv$};
\draw(p7)--(p1);
\draw(p7)--(p3);
\draw[lightgray](p7)--(p5);
\draw(p8)--(p2);
\draw(p8)--(p3);
\draw[lightgray](p8)--(p6);
\draw[lightgray](p1)--(p3);
\draw[lightgray](p2)--(p3);
\draw[lightgray](p3)--(p5);
\draw[lightgray](p3)--(p6);
\draw(p1)--(p6);
\draw(p2)--(p5);

\end{tikzpicture} 

\end{tabular}
\vspace{0.3cm}
\caption{$N(v)$ and $N(sv)$ intersect in one vertex, 
$vv_1$ and $vv_2$ belong to the same spanning tree, $v_0\not=v_3$ and $v_1v_2\in E(G)$.}
\label{Degree3VertexB}
\end{figure}
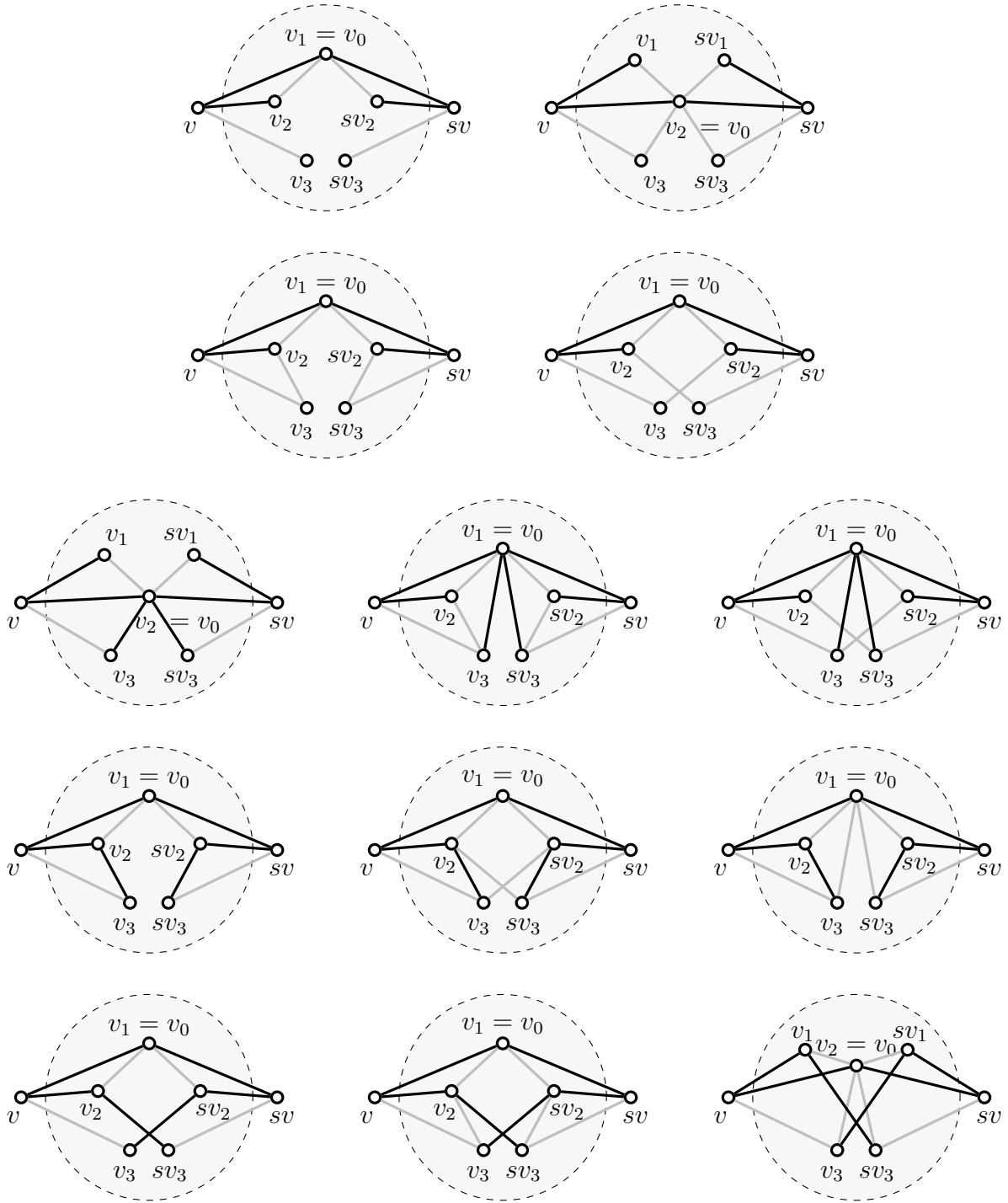

{\bf Case (3)}
Suppose $P$ joins $v_3$ to $sv_j$.
Then $G_1\backslash\{v,sv\}$ does not contain paths which join either $v_3$ to $v_0$, or, $v_3$ to $v_j$.
In particular, either $e=v_0v_3\notin E(G)$ or $f=v_jv_3\notin E(G)$.
Thus $\S(v)$ appears among the first, second and fifth rows of Fig. \ref{Degree3VertexB}.
If $e\notin E(G)$ then let $H$ be the graph obtained by  adjoining the edges $e$ and $se$ to $G\backslash\{v,sv\}$ and 
let $H_1=(G_1\cap H)\cup\{e,se\}$ and $H_2=G_2\cap H$.
If $e\in E(G)$ then $f\notin E(G)$ and $H$ is obtained by  adjoining the edges $f$ and $sf$  to $G\backslash\{v,sv\}$.
Let $H_1=(G_1\cap H)\cup\{g, sg\}$ and $H_2=((G_2\cap H)\backslash\{g,sg\})\cup\{f,sf\}$ where $g=v_0v_j$. 

In each of the above cases $H$ is an edge-disjoint union of the $\bZ_2$-symmetric spanning trees $H_1$ and $H_2$. Moreover, $H$ has no fixed edges and so $(H,\theta)$ is an admissible pair. 
If  $e\notin E(G)$ then $G$ is obtained from $H$ by applying a $(\bZ_2,\theta)$  $1$-extension on the vertices $v_1,v_2,v_3$ and the edge $e$.
In cases (1) and (2), if  $e\in E(G)$ then $G$ is obtained from $H$ by applying a modified $(\bZ_2,\theta)$  $1$-extension on the vertices $v_1,v_2,v_3$ and the edge $f$.
In case (3), if $e\in E(G)$ then $G$ is obtained from $H$ by applying a $(\bZ_2,\theta)$  $1$-extension on the vertices $v_1,v_2,v_3$ and the edge $f$. 
\endproof

\subsection{$(\bZ_2,\theta)$ fixed-vertex-to-$W_5$ extension.}
The  remaining graph extension in the construction scheme involves the wheel graph $W_5$ and action $\theta^*:\bZ_2\to \Aut(W_5)$  defined in Example \ref{WheelGraph}. The vertex in $W_5$ which is fixed by $s$ is denoted $v_0$. See Fig.~\ref{VertexG*} for an illustration.

\begin{definition}
\label{Csgtogtilde}
Let $G$ be a simple graph which is $\bZ_2$-symmetric with respect to $\theta$. Suppose that $W_5$ is a subgraph of $G$ and that the restriction of $\theta$ to $V(W_5)$ is the action $\theta^*$. Suppose further that there exists a graph $H$ with the following properties.
\begin{enumerate}[(i)]
\item
$V(G)=V(H)\cup V(W_5)$ and $V(H)\cap V(W_5)=\{v_0\}$ where $v_0$ is the $4$-valent vertex in $W_5$.
\item $H$ is $\bZ_2$-symmetric with respect to $\theta$. 
\item
$E(G)=E(W_5)\cup (E(H)\backslash\{wv_0\in E(H): w\in N(v_0)\})\cup \{w(\pi(w)):w\in N(v_0)\}$
where,
\begin{itemize}
	\item $N(v_0)$ is the set of vertices in $H$ which are adjacent to $v_0$, and,
	\item $\pi:N(v_0)\to V(W_5)$ is any map which satisfies $s(\pi(w))=\pi(sw)$ for all $w\in N(v_0)$.
\end{itemize}
\end{enumerate}
Then $G$ is said to be obtained from $H$ by applying a {\em $(\bZ_2,\theta)$ fixed-vertex-to-$W_5$ extension} (on the vertex $v_0$). 
\end{definition}

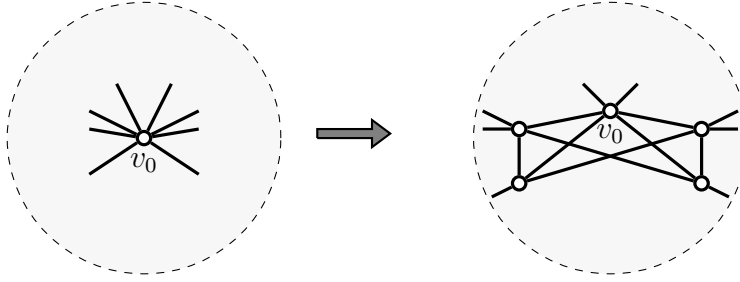
\begin{figure}[htp]
\begin{center}
\begin{tabular}{c}
\begin{tikzpicture}[very thick,scale=1.2]
\tikzstyle{every node}=[circle, draw=black, fill=white, inner sep=0pt, minimum width=5pt];
\filldraw[fill=black!03!white, draw=black, thin, dashed](0,0)circle(1.5cm);
\node[rectangle, draw=black!03!white,fill=black!03!white](l2) at (0,-0.27) {$v_0$};

\node (p1) at (0,0) {};

\draw (p1)  --  (-0.3,0.6);
\draw (p1)  --  (0.3,0.6);
\draw (p1)  --  (-0.6,0.3);
\draw (p1)  --  (0.6,0.3);
\draw (p1)  --  (-0.6,0.1);
\draw (p1)  --  (0.6,0.1);
\draw (p1)  --  (-0.6,-0.4);
\draw (p1)  --  (0.6,-0.4);

\filldraw[fill=black!50!white, draw=black, thick]
    (1.9,0) -- (2.5,0) -- (2.5,-0.1) -- (2.7,0.05) -- (2.5,0.2) -- (2.5,0.1) -- (1.9,0.1) -- (1.9,0);
\end{tikzpicture}
\hspace{0.8cm}
\begin{tikzpicture}[very thick,scale=1.2]
\tikzstyle{every node}=[circle, draw=black, fill=white, inner sep=0pt, minimum width=5pt];
\filldraw[fill=black!03!white, draw=black, thin, dashed](0,0)circle(1.5cm);
\node[rectangle, draw=black!03!white,fill=black!03!white](l2) at (0,0.05) {$v_0$};
\node[rectangle, draw=black!03!white,fill=black!03!white,above left](l1) at (-0.8,0.24) {};
\node[rectangle, draw=black!03!white,fill=black!03!white,above right](l2) at (0.8,0.24) {};
\node[rectangle, draw=black!03!white,fill=black!03!white,below](l1) at (-0.8,-0.65) {};
\node[rectangle, draw=black!03!white,fill=black!03!white,below](l2) at (0.9,-0.65) {};

\path (0,0.3) node (p1)  {} ;
\path (-1,0.1) node (p2)  {} ;
\path (1,0.1) node (p3)  {} ;
\path (-1,-0.5) node (p4)  {} ;
\path (1,-0.5) node (p6)  {} ;

\draw (p1)  --  (-0.3,0.6);
\draw (p1)  --  (0.3,0.6);
\draw (p2)  --  (-1.4,0.3);
\draw (p3)  --  (1.4,0.3);
\draw (p2)  --  (-1.4,0.1);
\draw (p3)  --  (1.4,0.1);
\draw (p4)  --  (-1.3,-0.65);
\draw (p6)  --  (1.3,-0.65);

\draw (p1)  --  (p2);
\draw (p1)  --  (p3);
\draw (p1)  --  (p4);
\draw (p1)  --  (p6);
\draw (p2)  --  (p4);
\draw (p3)  --  (p6);
\draw (p4)  --  (p3);
\draw (p6)  --  (p2);
\end{tikzpicture} 
\end{tabular}
\end{center}
\vspace{-0.2cm}
\caption{\emph{A $(\bZ_2,\theta)$ fixed-vertex-to-$W_5$ extension $G$ of a graph $H$.}}
\label{VertexG*}
\end{figure}

\begin{lemma}
\label{3VertexLemma}
Let $(G,\theta)$ be an admissible pair with minimum vertex degree 3 and let $v$ be a $3$-valent vertex in $G$.
Suppose $N(v)$ and $N(sv)$ intersect in three vertices and that $G\not=W_5$.
Then  there exists a graph $H$ with the following properties. 
\begin{enumerate}[(i)]
\item $G$ is obtained from $H$ by applying  either a $(\bZ_2,\theta)$ $1$-extension, a modified $(\bZ_2,\theta)$ $1$-extension or a $(\bZ_2,\theta)$ fixed-vertex-to-$W_5$ extension.
\item $(H,\theta)$ is an admissible pair. 
\end{enumerate}
\end{lemma}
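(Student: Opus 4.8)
The plan is to first pin down the local structure forced by the hypotheses and then split into two cases according to whether the two non-fixed neighbours of $v$ are joined by an edge.

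Since $v$ is $3$-valent and $N(v)\cap N(sv)$ has three elements, Lemma~\ref{NbdLemma} puts the fixed vertex $v_0$ into $N(v)$; writing $N(v)=\{v_0,v_1,v_2\}$, the identity $N(v)\cap N(sv)=\{v_0,sv_1,sv_2\}$ gives $\{v_1,v_2\}=\{sv_1,sv_2\}$, and since $v_1\neq v_0$ is not fixed this forces $sv_1=v_2$. Thus $N(v)=N(sv)=\{v_0,v_1,sv_1\}$, the edge $v_1(sv_1)$ is fixed by $s$ hence absent, and the induced subgraph on the (distinct) vertices $\{v,sv,v_0,v_1,sv_1\}$ is a copy of $K_{2,3}$ together with \emph{possibly} the symmetric pair $v_0v_1,v_0(sv_1)$. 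Fix a decomposition $G=G_1\cup G_2$ into edge-disjoint $\bZ_2$-symmetric spanning trees. The edges $vv_1$ and $v(sv_1)$ cannot lie in the same tree, or that tree would contain the $4$-cycle $v,v_1,sv,sv_1$; so after relabelling $v_1\leftrightarrow sv_1$ and $G_1\leftrightarrow G_2$ we may assume $vv_0,vv_1\in G_1$ and $v(sv_1)\in G_2$, whence by symmetry $G_1$ contains the path $v_1-v-v_0-sv-sv_1$ and $G_2$ contains $v(sv_1),(sv)v_1$. Finally, if $v_0v_1\in E(G)$ then, because $G_1$ already joins $v_0$ to $v_1$ along that path, $v_0v_1$ (and $v_0(sv_1)$) must lie in $G_2$.

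\textbf{Case 1: $v_0v_1\notin E(G)$.} Put $H=(G\setminus\{v,sv\})\cup\{v_0v_1,v_0(sv_1)\}$, so that $G$ is a $(\bZ_2,\theta)$ $1$-extension of $H$ on the vertices $v_0,v_1,sv_1$ and the edge $e=v_0v_1$ (and $e\neq se$ since $v_1\neq sv_1$). That $(H,\theta)$ is admissible follows exactly as in Lemma~\ref{2VertexLemma}: deleting $v,sv$ from $G_1$ leaves three components meeting $v_1$, $sv_1$ and $v_0$, which the two new edges reconnect into a spanning tree $H_1=(G_1\cap H)\cup\{v_0v_1,v_0(sv_1)\}$; $v,sv$ are leaves of $G_2$, so $H_2=G_2\cap H$ is a spanning tree; both are $\bZ_2$-symmetric and edge-disjoint, and $H$ has no fixed edge. (No modified $1$-extension occurs in this lemma: the only possible edge for a move at $v$ is $v_0v_1$ or $v_0(sv_1)$, and the modified version would require a loop.)

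\textbf{Case 2: $v_0v_1\in E(G)$.} Now the induced subgraph $W$ on $\{v_0,v_1,sv_1,v,sv\}$ has precisely the eight edges of a wheel — spokes $v_0v_1,v_0(sv_1),v_0v,v_0(sv)$ and rim $v_1-v-sv_1-sv-v_1$ — and, since $v_0$ is its unique $4$-valent vertex and $s$ acts on the rim as the half-turn $v_1\leftrightarrow sv_1$, $v\leftrightarrow sv$, the restriction of $\theta$ to $V(W)$ is the action $\theta^*$ of Example~\ref{WheelGraph}. I would then take $H$ to be obtained from $G$ by \emph{collapsing this $W_5$ back to $v_0$}: delete $v_1,sv_1,v,sv$, keep every edge of $G$ inside $F:=V(G)\setminus\{v_0,v_1,sv_1,v,sv\}$, and replace each edge of $G$ from a vertex $w\in F$ to $\{v_1,sv_1\}$ by the edge $wv_0$. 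This exhibits $G$ as a $(\bZ_2,\theta)$ fixed-vertex-to-$W_5$ extension of $H$, with $\pi(w)$ the $W_5$-vertex adjacent to $w$. For the two spanning trees of $H$: $G_2\setminus\{v,sv\}$ is a spanning tree of $V(G)\setminus\{v,sv\}$ in which $v_1-v_0-sv_1$ is a path, so identifying $v_1,sv_1$ with $v_0$ contracts that path to a spanning tree $H_2$ of $V(H)$; and $G_1\setminus\{v,sv\}$ is a forest whose three components contain $v_1$, $sv_1$ and $v_0$, so the same identification glues them into a spanning tree $H_1$ of $V(H)$. One then checks $H_1,H_2$ are $\bZ_2$-symmetric, edge-disjoint, and that $H$ has no fixed edge, so $(H,\theta)$ is admissible.

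The step that needs real work is the legitimacy of the Case~2 collapse, namely the claim that \emph{no vertex $w\in F$ is adjacent in $G$ to more than one vertex of $\{v_0,v_1,sv_1\}$}; without it, $G$ is not a fixed-vertex-to-$W_5$ extension on $W$ at all, and the identification produces multiple edges and destroys the edge-disjointness of $H_1,H_2$. If both of $w$'s edges to $\{v_0,v_1,sv_1\}$ lie in $G_1$ the claim is immediate, since $v_0,v_1,sv_1$ lie in three distinct components of the forest $G_1\setminus\{v,sv\}$; if both lie in $G_2$ it is immediate too, since $v_1-v_0-sv_1$ is a path in the tree $G_2\setminus\{v,sv\}$ and a second route would close a cycle. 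The remaining possibility — $w$ joined to one of $\{v_1,sv_1\}$ in one tree and to another of $\{v_0,v_1,sv_1\}$ in the other — is the main obstacle; I expect to rule it out by combining the forced tree structure around $v_0$ with the minimum-degree-$3$ hypothesis (the forced edges at $v_0,v_1,sv_1,v,sv$ leave such a $w$ with too few available incident edges unless it is linked onward through $F$, and chasing this forces a vertex of degree $<3$), a type of argument in the spirit of the edge re-colouring in Lemma~\ref{1VertexLemma}. Once this is settled, Cases~1 and~2 exhaust all possibilities by Lemma~\ref{NbdLemma}, completing the induction step.
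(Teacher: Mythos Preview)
Your Case~1 is fine and matches the paper. Case~2 has a genuine gap: the contractibility claim you flag as ``the step that needs real work'' is \emph{false} in general, so the local argument you sketch cannot be completed. Here is an explicit admissible pair of minimum degree~$3$ in which the wheel $\S(v)$ is not contractible. Take eleven vertices $v_0,v_1,sv_1,v,sv,w,sw,u,su,x,sx$ with
\[
G_1:\;x\!-\!u\!-\!w\!-\!v_1\!-\!v\!-\!v_0\!-\!sv\!-\!sv_1\!-\!sw\!-\!su\!-\!sx,
\]
\[
G_2:\;\{v_0v_1,\,v_0(sv_1),\,v_0w,\,v_0(sw),\,v_0x,\,v_0(sx),\,v(sv_1),\,(sv)v_1,\,u(sx),\,(su)x\}.
\]
Both $G_1,G_2$ are $\bZ_2$-symmetric spanning trees, no edge is fixed, and every vertex has degree at least~$3$. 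The wheel on $\{v_0,v_1,sv_1,v,sv\}$ is present, yet $w$ is adjacent to both $v_1$ (in $G_1$) and $v_0$ (in $G_2$); so your ``remaining possibility'' genuinely occurs and the collapse would create a double edge $wv_0$.

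The paper's route is different: when the wheel $\S(v)$ is non-contractible it does \emph{not} insist on reducing at $v$. Instead, a global edge count shows that if every $3$-valent vertex sat inside a non-contractible symmetric $W_5$ one would have $|E|\geq 2|V|-1$, contradicting $|E|=2|V|-2$. Hence some other $3$-valent vertex $w'$ has $\S(w')\not\cong W_5$, and Lemmas~\ref{2VertexLemma}--\ref{1VertexLemma} give a $(\bZ_2,\theta)$ $1$-extension or a modified $1$-extension at $w'$. This is precisely why the modified $1$-extension appears in the lemma's conclusion despite (as you correctly note) being impossible at $v$ itself. In the example above one can pivot to $u$ (where $|N(u)\cap N(su)|=2$) or to $w$ (where $|N(w)\cap N(sw)|=1$).
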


\proof
By Lemma \ref{NbdLemma}, $N(v)$ must contain the fixed vertex $v_0$ and so
$N(v)=\{v_0,v_1,sv_1\}$, say. 
The graph $G$ may be expressed as a union of two edge-disjoint $\bZ_2$-symmetric spanning trees $G_1$ and $G_2$. Note that the edges $vv_1$ and $v(sv_1)$ cannot both belong to the same spanning tree since, otherwise, either $G_1$ or $G_2$ would contain the $4$-cycle $vv_1,v_1(sv),s(vv_1),(sv_1)v$.
Since $v_1$ and $sv_1$ can be relabeled as $sv_1$ and $v_1$ respectively, it may be assumed that $vv_0$ and $vv_1$ are both contained in $G_1$.
If $v_0v_1\notin E(G)$ then $\S(v)$ is represented by the first graph in 
Fig. \ref{Degree3VertexD}.
Let $H$ be the graph obtained  by adjoining the edges $e=v_0v_1$ and $se$ to $G\backslash\{v,sv\}$ and let $H_1=(G_1\cap H)\cup\{e,se\}$ and $H_2=G_2\cap H$. 
Then $G$ is obtained from $H$ by applying a $(\bZ_2,\theta)$ $1$-extension on the vertices $v_0$, $v_1$, $sv_1$ and the edge $se$.
If $v_0v_1\in E(G)$ then $\S(v)$ is a copy of the wheel graph $W_5$ and is represented by the second graph in Fig. \ref{Degree3VertexD}.
Note that $\S(v)$ is $\bZ_2$-symmetric with respect to $\theta$ and no edge of $\S(v)$ is fixed by $s$. Thus the restriction of $\theta$ to $\S(v)$ is the unique action $\theta^*$ defined in Example \ref{WheelGraph}.
There are two cases to consider.
\begin{figure}[htp]
\begin{center}
\begin{tabular}{c}

\begin{tikzpicture}[very thick,scale=0.8]
\tikzstyle{every node}=[circle, draw=black, fill=white, inner sep=0pt, minimum width=5pt];
\filldraw[fill=black!03!white, draw=black, thin, dashed](0,0)circle(1.62cm);
\node[rectangle, draw=black!03!white,fill=black!03!white](l1) at (-0.65,1.03) {$v_1$};
\node[rectangle, draw=black!03!white,fill=black!03!white](l2) at (0.65,1.05) {$sv_1$};
\node[rectangle, draw=black!03!white,fill=black!03!white](l1) at (0,-1) {$v_0$};
\node (p1) at (-0.65,0.75) {};
\node (p2) at (0.65,0.75) {};
\node (p5) at (0,-0.6) {};
\node (p7) at (-2,0) {};
\node (p8) at (2,0) {};
\node[rectangle, draw=white,fill=white,below right](l1) at (-2.25,-0.2) {$v$};
\node[rectangle, draw=white,fill=white,below left](l2) at (2.3,-0.2) {$sv$};
\draw(p7)--(p1);
\draw[lightgray](p7)--(p2);
\draw(p7)--(p5);
\draw(p8)--(p2);
\draw[lightgray](p8)--(p1);
\draw(p8)--(p5);
\end{tikzpicture}
\hspace{0.7cm}
\begin{tikzpicture}[very thick,scale=0.8]
\tikzstyle{every node}=[circle, draw=black, fill=white, inner sep=0pt, minimum width=5pt];
\filldraw[fill=black!03!white, draw=black, thin, dashed](0,0)circle(1.62cm);
\node[rectangle, draw=black!03!white,fill=black!03!white](l1) at (-0.65,1.03) {$v_1$};
\node[rectangle, draw=black!03!white,fill=black!03!white](l2) at (0.65,1.05) {$sv_1$};
\node[rectangle, draw=black!03!white,fill=black!03!white](l1) at (0,-1) {$v_0$};
\node (p1) at (-0.65,0.75) {};
\node (p2) at (0.65,0.75) {};
\node (p5) at (0,-0.6) {};
\node (p7) at (-2,0) {};
\node (p8) at (2,0) {};
\node[rectangle, draw=white,fill=white,below right](l1) at (-2.25,-0.2) {$v$};
\node[rectangle, draw=white,fill=white,below left](l2) at (2.3,-0.2) {$sv$};
\draw(p7)--(p1);
\draw[lightgray](p7)--(p2);
\draw(p7)--(p5);
\draw(p8)--(p2);
\draw[lightgray](p8)--(p1);
\draw(p8)--(p5);
\draw[lightgray](p5)--(p1);
\draw[lightgray](p5)--(p2);
\end{tikzpicture}
\end{tabular}
\end{center}
\vspace{-0.3cm}
\caption{$N(v)$ and $N(sv)$  intersect in three vertices.}
\label{Degree3VertexD}
\end{figure}
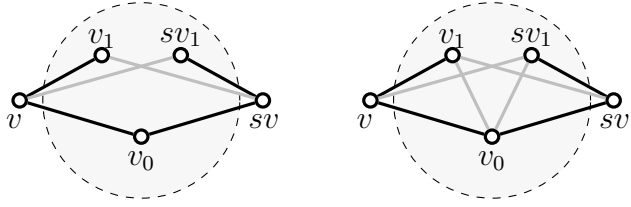

{\bf Case (1)}
Suppose there exists a vertex in $G\backslash \S(v)$ which is adjacent to two vertices in $\S(v)$. Then $\S(v)$ is not contractible.
If $G$ contains no other symmetric copies of $W_5$ then by counting degrees of vertices it follows that $|E|\geq 2|V|-1$. This is a contradiction since $G$ is an edge disjoint union of two spanning trees. More generally, if $G$ contains exactly $k$ non-contractible symmetric copies of $W_5$ and every vertex of degree $3$ is contained in one of these copies then $|E|\geq 2|V|-2+k$. This is a contradiction and so there must exist a vertex $w$ in $G$ with  degree $3$ such that the symmetric neighbourhood of $w$ is not a copy of $W_5$. In this case, $G$ may obtained from an admissible pair $(H,\theta)$ by either a $1$-extension or a modified $1$-extension.

{\bf Case (2)}
Suppose  that no vertex of $G\backslash \S(v)$ is  adjacent to two vertices in
$\S(v)$ and let $H$ be the graph obtained by contracting $\S(v)$ to $v_0$.
Then $G$ may be obtained from $H$ by applying a $(\bZ_2,\theta)$ fixed-vertex-to-$W_5$ extension on the vertex $v_0$.
Let $H_1$ and $H_2$ be the graphs obtained from $G_1$ and $G_2$ respectively as a result of this contraction operation. 
The graphs $H_1$ and $H_2$ are edge-disjoint $\bZ_2$-symmetric spanning trees in $H$ and it follows that $(H,\theta)$ is an admissible pair. 
\endproof

\begin{remark}
The role of the graph $W_5$ in the construction scheme is analogous to the role played by the complete graph $K_4$  in  constructing $(2,2)$-tight graphs (see \cite{NOP}). 
The $(\bZ_2,\theta)$ fixed-vertex-to-$W_5$ extension (described above) is comparable to the {\em vertex-to-$K_4$ move}.   
\end{remark}

\subsection{Construction scheme for admissible pairs.}
\label{Construction}
The four $(\bZ_2,\theta)$ graph extensions described  above  are referred to as {\em allowable}
and the notation $H\to G$ indicates that a graph $G$ is obtained from $H$ by an allowable graph extension.
To summarise, the allowable graph extensions are:
$(\bZ_2,\theta)$ $0$-extensions,
$(\bZ_2,\theta)$ $1$-extensions,
modified $(\bZ_2,\theta)$ $1$-extensions and
$(\bZ_2,\theta)$ fixed-vertex-to-$W_5$ extensions.

\begin{theorem}
\label{ConstructionScheme}
Let $G$ be a finite simple graph and let $\theta:\bZ_2\to \Aut(G)$ be an action of the group $\bZ_2=\langle s\rangle$.
If $G$ is expressible as a union of two edge-disjoint spanning trees,  both of which are $\bZ_2$-symmetric with respect to $\theta$,
and no edge of $G$ is fixed by $s$
then there exists a construction chain,
\[W_5=G^1\to G^2 \to\cdots \to G^n = G,\]
such that for each $k=1,2,\ldots,n-1$,
\begin{enumerate}[(i)]
\item $G^k\to G^{k+1}$ is an allowable $(\bZ_2,\theta)$ graph extension, and,
\item $G^k$ is expressible as a union of two edge-disjoint spanning trees,  both of which are $\bZ_2$-symmetric with respect to $\theta$, and no edge of $G^k$ is fixed by $s$.
\end{enumerate}
\end{theorem}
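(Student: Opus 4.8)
The plan is to prove the theorem by strong induction on $|V(G)|$, with the single base graph $W_5$ and the reduction lemmas of this section supplying the inductive step. Throughout, write $(G,\theta)$ for the pair in the statement and note that the two hypotheses on $G$ say precisely that $(G,\theta)$ is admissible. First I would observe that, being a union of two edge-disjoint spanning trees, $G$ is connected, every vertex of $G$ has degree at least $2$, and $|E(G)|=2|V(G)|-2$, so the average degree of $G$ is $4-4/|V(G)|<4$ and $G$ contains a vertex of degree $2$ or $3$.

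For the base case I would record that an admissible pair on at most $5$ vertices must be $(W_5,\theta^*)$. By Lemma~\ref{FixedVertexLemma}, $|V(G)|$ is odd and the unique fixed vertex $v_0$ has degree at least $4$; since no graph on at most $4$ vertices has a vertex of degree $4$, this forces $|V(G)|\geq 5$, and when $|V(G)|=5$ the vertex $v_0$ is joined to every other vertex, leaving exactly $4$ of the $|E(G)|=2(|V(G)|-1)=8$ edges among the four non-fixed vertices. Since $s$ acts on those four vertices as a fixed-point-free involution and no edge of $G$ is fixed by $s$, the only $s$-invariant, fixed-edge-free set of four edges on them is the $4$-cycle through them; hence $G=W_5$, and $\theta=\theta^*$ by Example~\ref{WheelGraph}. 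In this case the construction chain is the trivial chain $W_5=G^1=G$.

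For the inductive step assume $|V(G)|\geq 6$ (so in fact $|V(G)|\geq 7$, being odd) and that the statement holds for all admissible pairs on fewer vertices. Pick a vertex $v$ of $G$ of degree $2$ or $3$. If $v$ has degree $2$, Lemma~\ref{Degree2VertexLemma} yields an admissible pair $(H,\theta)$ from which $G$ is obtained by a $(\bZ_2,\theta)$ $0$-extension. If $v$ has degree $3$, then $G$ has minimum degree $3$, and by Lemma~\ref{NbdLemma} the sets $N(v)$ and $N(sv)$ are disjoint, meet in exactly one vertex, meet in exactly two vertices, or meet in exactly three vertices; in these four cases Lemma~\ref{0VertexLemma}, Lemma~\ref{1VertexLemma}, Lemma~\ref{2VertexLemma}, and Lemma~\ref{3VertexLemma} respectively (the hypothesis $G\neq W_5$ needed in the last being automatic since $|V(G)|\geq 6$) produce an admissible pair $(H,\theta)$ such that $G$ is obtained from $H$ by one of the four allowable extensions. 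In every case $v$ is not the fixed vertex $v_0$, since $v_0$ has degree at least $4$ by Lemma~\ref{FixedVertexLemma}; hence $v\neq sv$ and $|V(H)|<|V(G)|$. By the inductive hypothesis there is a construction chain $W_5=G^1\to\cdots\to G^m=H$ of admissible pairs joined by allowable $(\bZ_2,\theta)$ extensions, and appending the step $H\to G$ gives the required chain for $G$.

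The substantive difficulties have already been absorbed into the lemmas of this section, most notably the recolouring of spanning trees in Lemma~\ref{1VertexLemma} and the degree-counting argument in Lemma~\ref{3VertexLemma} which prevents the reduction from stalling at a $3$-valent vertex whose symmetric neighbourhood is a copy of $W_5$. What remains to be verified in assembling the theorem itself is comparatively light: that the case split on the degree and on $|N(v)\cap N(sv)|$ is exhaustive (Lemmas~\ref{FixedVertexLemma} and~\ref{NbdLemma}), that each reduction strictly lowers the number of vertices (which is where $v\neq sv$ is used), and that admissibility — and with it the restricted action $\theta$ — is preserved at every link of the chain, all of which is delivered directly by the lemmas.
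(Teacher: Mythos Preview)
Your proposal is correct and follows the paper's own proof almost verbatim: strong induction on $|V(G)|$ with base case $W_5$, then reduce by picking a vertex of degree $2$ or $3$ and invoking the appropriate lemma from the section. One minor wording issue: the inference ``If $v$ has degree $3$, then $G$ has minimum degree $3$'' only holds if $v$ is chosen of \emph{minimum} degree (which you need for the hypothesis of Lemma~\ref{3VertexLemma}); the paper's proof is equally informal on this point, and the fix is the obvious one.
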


\proof
Note that $W_5$ is the only graph on five or fewer vertices which admits a $\bZ_2$-action that satisfies the hypothesis of the theorem. 
Suppose, to obtain a proof by induction, that $|V(G)|>5$ and that the statement of the theorem holds for all admissible pairs $(\tilde{G},\tilde{\theta})$ for which $|V(\tilde{G})|<|V(G)|$.
Since $|E(G)|=2(|V(G)|-1)$ it follows that $G$ must contain a vertex $v$ of degree $2$ or $3$.
If $\deg(v)=2$ then, by Lemma \ref{Degree2VertexLemma},  there exists an admissible pair $(H,\theta)$ together with a $(\bZ_2,\theta)$ $0$-extension $H\to G$. 
If $\deg(v)=3$ then $\S(v)$  satisfies the conditions of one of the Lemmas \ref{2VertexLemma}, \ref{0VertexLemma}, \ref{1VertexLemma}, and \ref{3VertexLemma}.
Thus there exists an admissible pair $(H,\theta)$ together with an allowable graph extension $H\to G$. 
In each case, $H$ has fewer vertices than $G$ and so there exists a construction chain for $H$.
This establishes the existence of a construction chain for $G$ and so the induction step is complete.
\endproof


\section{$\bZ_2$-symmetric frameworks in $(\bR^2,\|\cdot\|_\P)$}
\label{sec:suffcon}
A bar-joint framework $(G,p)$ in $\bR^2$ consists of a finite simple graph $G$ and a point $p=(p(v))_{v\in V}$ such that the components $p(v)$ are distinct points in $\bR^2$.
Let $\|\cdot\|_\P$ be a norm on $\bR^2$ with the property that the unit ball $\P$ is a quadrilateral (eg. the $\ell^1$ or $\ell^\infty$ norm). 
A bar-joint framework is $\Gamma$-symmetric in $(\bR^2,\|\cdot\|_\P)$
if there exists a group action $\theta:\Gamma\to\Aut(G)$ and a group representation $\tau:\Gamma\to\GL(\bR^2)$ such that for each $\gamma\in \Gamma$, $\tau(\gamma)$ is an isometry of $(\bR^2,\|\cdot\|_\P)$ and $\tau(\gamma)p(v)=p(\gamma v)$ for all vertices $v\in V(G)$.
The elements of $\Gamma$ are called symmetry operations of $(G,p)$
and $\Gamma$ is called a symmetry group of $(G,p)$.
For each facet $F$ of $\P$, let $[F]=\{F,-F\}$.

\begin{definition}
Let $\|\cdot\|_\P$ be a norm on $\bR^2$ where the unit ball $\P$ is a quadrilateral and let $\tau:\Gamma\to \GL(\bR^2)$ be a group representation.
A group element $\gamma\in \Gamma$ {\em preserves the facets} of $\P$ if $\tau(\gamma)F\in [F]$  for each facet $F$ of $\P$. 
\end{definition}

A bar-joint framework is {\em well-positioned} in $(\bR^2,\|\cdot\|_\P)$ if for each edge $vw$, the normalised vector $\frac{p(v)-p(w)}{\|p(v)-p(w)\|_\P}$ 
is contained in exactly one facet $F$ of $\P$. The pair $[F]$ corresponding to this unique facet is referred to as the framework colour of the edge $vw$.
Denote by $G_F$ the {\em monochrome} subgraph of $G$ spanned by edges with framework colour $[F]$, provided such edges exist. 

\begin{lemma}
\label{Compatible}
Let $(G,p)$ be a well-positioned and $\Gamma$-symmetric bar-joint framework in $(\bR^2,\|\cdot\|_\P)$. 
If each symmetry operation $\gamma\in \Gamma$ preserves the facets of $\P$
then the monochrome subgraphs of $G$ are $\Gamma$-symmetric.
\end{lemma}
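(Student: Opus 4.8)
The plan is to verify directly that for each $\gamma\in\Gamma$ the automorphism $\theta(\gamma)$ of $G$ carries the edge set of each monochrome subgraph $G_F$ onto itself; since the vertex set of $G_F$ (the endpoints of edges with framework colour $[F]$) is then invariant under $\theta(\gamma)$, it follows, as noted at the start of Section~\ref{sec:construction}, that $\theta$ induces an action of $\Gamma$ on $G_F$, i.e.\ $G_F$ is $\Gamma$-symmetric.

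First I would record the elementary fact that $\tau(\gamma)$, being a linear isometry of $(\bR^2,\|\cdot\|_\P)$, satisfies $\tau(\gamma)\P=\P$; since $\P$ is a quadrilateral, $\tau(\gamma)$ therefore permutes the four facets of $\P$ and maps the relative interior of each facet onto the relative interior of its image. The hypothesis that $\gamma$ preserves the facets of $\P$ says precisely that $\tau(\gamma)F\in[F]=\{F,-F\}$ for every facet $F$, and hence $[\tau(\gamma)F]=[F]$.

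Next, fix an edge $vw$ of $G$ with framework colour $[F]$, so that the unit vector $u=(p(v)-p(w))/\|p(v)-p(w)\|_\P$ lies in exactly one facet of $\P$, namely $F$; as $(G,p)$ is well-positioned, $u$ is not a vertex of $\P$, so $u$ lies in the relative interior of $F$. Using $\tau(\gamma)p(x)=p(\gamma x)$ and the linearity of $\tau(\gamma)$ gives $p(\gamma v)-p(\gamma w)=\tau(\gamma)\big(p(v)-p(w)\big)$, and since $\tau(\gamma)$ is an isometry the $\|\cdot\|_\P$-norms of these two vectors agree. Hence the normalised vector associated with the edge $\gamma(vw)$ equals $\tau(\gamma)u$, which lies in the relative interior of $\tau(\gamma)F$, and in particular in exactly one facet of $\P$. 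Therefore $\gamma(vw)$ has framework colour $[\tau(\gamma)F]=[F]$, so $\theta(\gamma)$ maps edges of $G_F$ to edges of $G_F$. Running this for all $\gamma\in\Gamma$, and using that $\theta(\gamma^{-1})$ provides the inverse bijection on these edges, yields the claim.

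I do not expect a serious obstacle here: the one point that needs care is the role of the well-positioned hypothesis, which is exactly what guarantees that $u$ avoids the vertices of $\P$, so that $\tau(\gamma)u$ again lies in a single facet and the framework colour of $\gamma(vw)$ is unambiguously defined and equal to $[F]$. A secondary small point is to justify that a linear isometry of a polygonal norm permutes the facets of its unit ball --- it maps $\P$ bijectively to $\P$ and hence sends $1$-dimensional faces to $1$-dimensional faces --- which is standard.
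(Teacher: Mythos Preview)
Your proof is correct and follows essentially the same approach as the paper's own proof: both show that an edge $vw$ with framework colour $[F]$ is carried by $\theta(\gamma)$ to an edge with the same framework colour, using $p(\gamma v)-p(\gamma w)=\tau(\gamma)(p(v)-p(w))$ together with the facet-preserving hypothesis. The paper phrases this in terms of the conical hull of $\pm F$ rather than the normalised vector, and is briefer; your version is more explicit about why $\tau(\gamma)$ permutes facets and about the role of the well-positioned hypothesis, but the underlying argument is the same.
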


\proof
Let $\gamma\in \Gamma$ and suppose that $vw$ is an edge of $G$ with framework colour $[F]$.
Then $p(v)-p(w)$ is contained in the conical hull of either $F$ or $-F$.
Since $\gamma$ preserves the facets of $p$, $p(\gamma v)-p(\gamma w)=\tau(\gamma)(p(v)-p(w))$ is also contained in the conical hull of either $F$ or $-F$.
Thus $\gamma(vw)$ has the same framework colour as $vw$ and so
$G_F$ is $\Gamma$-symmetric.
\endproof

In \cite{kit-sch}, it is shown (for general norms) that $(G,p)$ is well-positioned if and only if the rigidity map $f_G:(\bR^2)^{|V(G)|}\to \bR^{|E(G)|}$, $(x(v))_{v\in V(G)} \mapsto(\|x(v)-x(w)\|_\P)_{vw\in E(G)}$ is differentiable at $p$. The elements of $\ker df_G(p)$ are called {\em infinitesimal flexes} of $(G,p)$. 
A collection of continuous paths $\alpha_{x}:(-\delta,\delta)\to \bR^2$, $x\in \bR^2$, is called a continuous rigid motion of $(\bR^2,\|\cdot\|_\P)$ if $\alpha_x(0)=x$ for all  $x\in \bR^2$
and $\|\alpha_x(t)-\alpha_y(t)\|_\P=\|x-y\|_\P$ for all $t\in(-\delta,\delta)$ and all $x,y\in \bR^2$.
An infinitesimal flex $u=(u(v))_{v\in V(G)}$ is regarded as {\em trivial} if there exists a continuous rigid motion such that $u(v)=\alpha_{p(v)}'(0)$ for all $v\in V(G)$.
If every infinitesimal flex of $(G,p)$ is trivial then $(G,p)$ is {\em infinitesimally rigid}.
A framework is {\em isostatic} if it is infinitesimally rigid and no proper spanning subframework is infinitesimally rigid. 
The rigidity of bar-joint frameworks with respect to norms for which the unit ball is a convex polytope is developed in \cite{kitson}. In particular, it is shown that infinitesimal rigidity is equivalent to continuous rigidity for well-positioned frameworks. 
The following theorem is proved in \cite{kit-pow}.

\begin{theorem}
\label{SpanningTreeThm}
Let $\|\cdot\|_\P$ be a norm on $\bR^2$ where the unit ball $\P$ is a quadrilateral.
Let $(G,p)$ be a well-positioned bar-joint framework in $(\bR^2,\|\cdot\|_\P)$.
Then $(G,p)$ is isostatic if and only if $G$ is an edge-disjoint union of two  monochrome spanning trees. 
\end{theorem}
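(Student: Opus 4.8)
The plan is to exploit the very special form that the differential of the rigidity map takes when $\P$ is a parallelogram (every centrally symmetric quadrilateral is a parallelogram). First I would record the combinatorial set-up. Since $(G,p)$ is well-positioned, every edge $vw$ has a framework colour; as $\P$ has four facets occurring in two antipodal pairs $[F_1],[F_2]$, the monochrome subgraphs $G_{F_1}$ and $G_{F_2}$ partition $E(G)$, so $|E(G)|=|E(G_{F_1})|+|E(G_{F_2})|$. Let $\phi_i$ be the linear functional on $\bR^2$ with $F_i\subseteq\{\phi_i=1\}$. The kernels of $\phi_1,\phi_2$ are the two (non-parallel) facet directions of the parallelogram, so $\phi_1,\phi_2$ form a basis of the dual space. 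On the open cone over the interior of $F_i$ the norm $\|\cdot\|_\P$ coincides with $\phi_i$, and on the cone over $-F_i$ it coincides with $-\phi_i$; in either case it is linear there. Differentiating $f_G$ at $p$ therefore shows that the row of $df_G(p)$ indexed by an edge $vw\in E(G_{F_i})$ is, up to an overall sign, the covector $u\mapsto \phi_i(u(v))-\phi_i(u(w))$; in particular it involves only the $\phi_i$-component of the vertex velocities.

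Next I would change coordinates on the space of velocity fields so that each $u(v)$ is recorded in the basis dual to $(\phi_1,\phi_2)$, and order the rows of $df_G(p)$ by colour. In these coordinates $df_G(p)$ is block diagonal, $\mathrm{diag}(M_1,M_2)$, where (after discarding harmless row signs) $M_i$ is the transpose of an oriented vertex--edge incidence matrix of the graph $G_{F_i}$ on the vertex set $V(G)$. The standard rank formula for incidence matrices gives $\rank M_i = |V(G)|-c(G_{F_i})$, where $c(\cdot)$ denotes the number of connected components (isolated vertices counted), and hence
$$\rank df_G(p)=2|V(G)|-c(G_{F_1})-c(G_{F_2}).$$

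Now I would bring in the trivial infinitesimal flexes. For a norm whose unit ball is a convex polytope, the identity component of the group of continuous rigid motions of $(\bR^2,\|\cdot\|_\P)$ consists of translations (the point group is finite), so the space of trivial flexes is exactly the $2$-dimensional space of constant velocity fields; this is part of the polytope-norm rigidity theory of \cite{kitson}, and it is insensitive to the particular positions $p$. Consequently $(G,p)$ is infinitesimally rigid precisely when $\dim\ker df_G(p)=2$, i.e. when $\rank df_G(p)=2|V(G)|-2$, which by the displayed identity is equivalent to $c(G_{F_1})=c(G_{F_2})=1$, i.e. both monochrome subgraphs are connected and spanning. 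Finally, $(G,p)$ is isostatic iff it is infinitesimally rigid \emph{and} the rows of $df_G(p)$ are linearly independent, i.e. additionally $|E(G)|=\rank df_G(p)=2|V(G)|-2$. Given that each $G_{F_i}$ is connected and spanning it has at least $|V(G)|-1$ edges, so $|E(G)|=2|V(G)|-2$ forces each $G_{F_i}$ to have exactly $|V(G)|-1$ edges, hence to be a spanning tree; conversely, if $G$ is the edge-disjoint union of two monochrome spanning trees then $|E(G)|=2|V(G)|-2$ and $c(G_{F_1})=c(G_{F_2})=1$, so $\rank df_G(p)=2|V(G)|-2=|E(G)|$ and $(G,p)$ is infinitesimally rigid with independent rows, i.e. isostatic.

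\textbf{Main obstacle.} Essentially everything above is linear algebra once two non-formal inputs are in place: the identification of the trivial flex space as exactly $2$-dimensional (ruling out extra continuous isometries — this is the point where the convex-polytope theory of \cite{kitson} is genuinely needed) and the reduction of ``isostatic'' to ``infinitesimally rigid with independent rows'' (removing an edge whose row lies in the span of the others preserves rigidity, while removing a row from an independent set drops the rank). The only other thing to watch is that the block decomposition is \emph{exact}, i.e. that no edge feeds into both coordinate blocks; this is precisely what well-positionedness supplies, since it assigns each edge a single facet pair $[F_i]$.
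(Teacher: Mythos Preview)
Your proposal is correct and gives a complete, self-contained argument. However, the paper does not prove Theorem~\ref{SpanningTreeThm} at all: it is stated with the preamble ``The following theorem is proved in \cite{kit-pow}'' and no proof is supplied in the present paper. So there is no in-paper proof to compare against; what you have written is essentially a reconstruction of the argument from \cite{kit-pow}, exploiting the block-diagonal structure of $df_G(p)$ in facet-adapted coordinates to reduce the rank computation to incidence matrices of the two monochrome subgraphs, together with the identification (from \cite{kitson}) of the trivial flex space with the translations.
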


The following lemma involves the wheel graph $W_5$ and action $\theta^*$ of Example \ref{WheelGraph}.

\begin{lemma}
\label{W5Placement}
Let $\|\cdot\|_\P$ be a norm on $\bR^2$ for which the unit ball $\P$ is a quadrilateral and let $\tau:\bZ_2\to\GL(\bR^2)$ be a representation of $\bZ_2=\langle s\rangle$ such that $s$ preserves the facets of $\P$.
Then there exists a point $p$ such that $(W_5,p)$ is isostatic and $\bZ_2$-symmetric with respect to $\theta^*$ and $\tau$.
\end{lemma}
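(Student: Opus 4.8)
\emph{Proof plan.} The idea is to produce an explicit $\bZ_2$-symmetric, well-positioned placement of $W_5$ whose two monochrome subgraphs are exactly the spanning trees $T_1,T_2$ of Example~\ref{WheelGraph}, and then apply Theorem~\ref{SpanningTreeThm}.

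First I would normalise the unit ball. As $\P$ is the unit ball of a norm it is centrally symmetric, and a centrally symmetric non-degenerate quadrilateral is a parallelogram; passing to a basis of facet directions, we may after a linear isometry $L$ with $L\P=[-1,1]^2$ assume $\P=Q:=[-1,1]^2$. (A linear isometry carries well-positioned frameworks to well-positioned frameworks, infinitesimal flexes to infinitesimal flexes and trivial flexes to trivial flexes, and a facet-preserving representation $\tau$ to the facet-preserving representation $L\tau(\cdot)L^{-1}$, so $(W_5,p)$ has the required properties for $\P,\tau$ iff $(W_5,Lp)$ does for $Q,L\tau(\cdot)L^{-1}$; moreover we may assume $\tau$ faithful, i.e.\ $\tau(s)\neq I$, since otherwise $W_5$ admits no $\bZ_2$-symmetric placement with distinct vertices.) Since $s$ preserves the facets of $Q$, the linear map $\tau(s)$ maps each of the two lines spanned by a facet direction to itself, so $\tau(s)=\operatorname{diag}(\varepsilon_1,\varepsilon_2)$ with $\varepsilon_i\in\{1,-1\}$; as $\tau(s)\neq I$, it is one of $-I$, $\operatorname{diag}(1,-1)$, $\operatorname{diag}(-1,1)$.

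Next I would write down the placement: set $p(v_0)=(0,0)$, $p(v_1)=(1,2)$, $p(v_2)=(-\tfrac32,1)$, $p(v_3)=\tau(s)p(v_1)$ and $p(v_4)=\tau(s)p(v_2)$. Then $p(\theta^*(\gamma)v)=\tau(\gamma)p(v)$ for every vertex $v$ and $\gamma\in\bZ_2$ (for $v_3,v_4$ one uses $\tau(s)^2=I$), and $\tau(s)$ is an isometry of $(\bR^2,\|\cdot\|_Q)$, so $(W_5,p)$ is $\bZ_2$-symmetric with respect to $\theta^*$ and $\tau$. Going through the three possibilities for $\tau(s)$ shows the five points are pairwise distinct in each case. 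For $Q$ the cone over the pair of vertical facets is $\{(x,y):|x|>|y|\}$ and the cone over the pair of horizontal facets is $\{(x,y):|y|>|x|\}$, and an edge is well-positioned precisely when its difference vector avoids the lines $|x|=|y|$. I would then verify directly that $p(v_0)-p(v_1)=(-1,-2)$ and $p(v_2)-p(v_3)$ lie in the $\{|y|>|x|\}$ cone (the latter equals $(-\tfrac12,3)$, $(-\tfrac52,3)$ or $(-\tfrac12,-1)$ according to the form of $\tau(s)$, all with $|y|>|x|$), while $p(v_0)-p(v_2)=(\tfrac32,-1)$ and $p(v_1)-p(v_2)=(\tfrac52,1)$ lie in the $\{|x|>|y|\}$ cone. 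The remaining edges $v_0v_3$, $v_0v_4$, $v_3v_4$, $v_4v_1$ are the $\tau(s)$-images of $v_0v_1$, $v_0v_2$, $v_1v_2$, $v_2v_3$ respectively, and since $\tau(s)=\operatorname{diag}(\pm1,\pm1)$ preserves each of the two cones they inherit the correct colours (cf.\ Lemma~\ref{Compatible}). Hence $(W_5,p)$ is well-positioned, its monochrome subgraph of one colour has edge set $\{v_0v_1,v_0v_3,v_2v_3,v_4v_1\}=E(T_1)$ and that of the other colour has edge set $\{v_0v_2,v_0v_4,v_1v_2,v_3v_4\}=E(T_2)$.

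By Example~\ref{WheelGraph} these are edge-disjoint spanning trees of $W_5$, so $W_5$ is an edge-disjoint union of two monochrome spanning trees and Theorem~\ref{SpanningTreeThm} gives that $(W_5,p)$ is isostatic; transporting $p$ back by $L^{-1}$ completes the proof. The only genuine work is the choice of coordinates for $p(v_1)$ and $p(v_2)$: they must lie off every mirror line and off the origin (so the five images stay distinct for each admissible $\tau(s)$) while simultaneously forcing the prescribed colour on each edge orbit for all three possible diagonal involutions — in effect one must solve a single system of strict inequalities that is insensitive to which involution $\tau(s)$ is. Exhibiting one such pair, as above, is the crux; alternatively one can dispatch the half-turn and the two reflections as three separate, equally short, cases.
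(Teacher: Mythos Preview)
Your argument is correct. The normalisation step is sound: a centrally symmetric quadrilateral is a parallelogram, the linear change of coordinates $L$ taking $\P$ to $Q=[-1,1]^2$ is an isometry between $(\bR^2,\|\cdot\|_\P)$ and $(\bR^2,\|\cdot\|_\infty)$, and it conjugates facet-preserving representations to facet-preserving representations. The observation that any facet-preserving $\tau(s)\in\GL(\bR^2)$ for $Q$ is automatically diagonal with entries $\pm1$ (and hence an isometry) is right and worth making explicit. I checked your explicit coordinates against each of the three involutions and the claimed colours and distinctness all hold, so the monochrome subgraphs really are $T_1$ and $T_2$, and Theorem~\ref{SpanningTreeThm} finishes the job. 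Your remark that $\tau$ must be faithful is also a fair point that the paper leaves implicit.

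The paper's own proof takes a different tack: it works directly with an arbitrary quadrilateral $\P$, fixes interior points $x_1\in F_1$, $x_2\in F_2$ and an extreme point $y$ of $\P$, and places the vertices successively along lines in those directions, adjusting $p(v_2)$ within a small ball near a line-intersection so that the last edge colour comes out correctly. Your approach trades that geometric existence argument for a single explicit coordinate choice that is verified case by case; it is shorter and more self-contained, but it relies on the normalisation step, whereas the paper's construction is phrased intrinsically and dovetails with the ``choose a point in a small ball near a line-intersection'' arguments used later in Propositions~\ref{RigidityPreservation1}--\ref{RigidityPreservation3}. Either route is perfectly adequate for this lemma.
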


\proof
Let $V(W_5)=\{v_0,v_1,sv_1,v_2,sv_2\}$ be the vertices of $W_5$ and let $\pm F_1$ and $\pm F_2$ be the facets of $\P$.
Choose points $x_1$ and $x_2$ in the relative interiors 
$F_1$ and $F_2$ respectively and let $y$ be an extreme point of $\P$.
Let $p(v_0)$ be any point in $\bR^2$ which is fixed by $\tau(s)$
and choose $p(v_1)$ to be a point on the line $L_1=\{p(v_0)+tx_1:t\in\bR\}$ which is distinct from $p(v_0)$.
Define $p(sv_1)=\tau(s)p(v_1)$.
Then the edge $v_0v_1$ has framework colour $[F_1]$ and, since $s$ preserves the facets of $\P$, the edge $s(v_0v_1)$ also has framework colour $[F_1]$.
Let $a$ be the point of intersection of the lines $L_2=\{p(v_0)+tx_2:t\in\bR\}$  and $L_3=\{p(sv_1)+ty:t\in\bR\}$ and let $B(a,r)$ be an open ball centred at $a$ with radius $r>0$.
Choose $p(v_2)$ to be a point in $B(a,r)$ which is distinct from $p(v_0),p(v_1),p(sv_1)$ and such that $p(v_2)\not=\tau(s)p(v_2)$. 
Note that if $r$ is sufficiently small then $v_0v_2$ has framework 
colour $[F_2]$.
Suppose $v_1v_2$ has framework colour $[F_1]$.
Then $p(v_2)$ may be chosen so that the edge $v_2(sv_1)$ has framework colour $[F_2]$. Similarly, if $v_1v_2$ has framework colour $[F_2]$
then $p(v_2)$ may be chosen so that $v_2(sv_1)$ has framework colour $[F_1]$.
Define $p(sv_2)=\tau(s)p(v_2)$.
Then $(W_5,p)$ is  well-positioned in $(\bR^2,\|\cdot\|_\P)$  and  $\bZ_2$-symmetric with respect to $\theta^*$ and $\tau$.
Moreover, the monochrome subgraphs $(W_5)_{F_1}$ and $(W_5)_{F_2}$ are spanning trees in $W_5$ and so, by Theorem \ref{SpanningTreeThm}, $(W_5,p)$ is  isostatic.
\endproof

A symmetry operation $\gamma\in \Gamma$ is called a {\em reflection} if there exists a rank one projection $P$ on $\bR^2$ such that $\tau(\gamma)=I-2P$, and,
a {\em half-turn rotation} if $\tau(\gamma)$ is the rotation of $\bR^2$ by $\pi$ about the origin. 
 A symmetry group which is generated by a reflection (respectively, a half-turn rotation) is denoted by $\C_s$ (respectively, by $\C_2$).

\begin{example}
Two isostatic placements of $W_5$ in $(\bR^2,\|\cdot\|_\infty)$ are indicated in Fig. \ref{fig:wheel} with induced monochrome symmetric spanning trees  indicated in black and gray respectively. The framework on the left is $\C_2$-symmetric and the framework on the right is $\C_s$-symmetric with respect to reflection in the $y$-axis. 

\vspace{-0.3cm}

\begin{figure}[htp]
\begin{center}
\begin{tikzpicture}[very thick,scale=0.6]
\tikzstyle{every node}=[circle, draw=black, fill=white, inner sep=0pt, minimum width=5pt];

\node[rectangle, draw=white,fill=white,below](l2) at (0,-0.25) {$v_0$};
\node[rectangle, draw=white,fill=white,above left](l1) at (-1.15,0.85) {$v_1$};
\node[rectangle, draw=white,fill=white,above right](l2) at (1.18,1) {$sv_2$};
\node[rectangle, draw=white,fill=white,below left](l1) at (-1.15,-0.8) {$v_2$};
\node[rectangle, draw=white,fill=white,below right](l2) at (1.15,-0.6) {$sv_1$};

\path (0,0.1) node (p1)  {} ;
\path (-1,-1) node (p2)  {} ;
\path (1,-0.8) node (p3)  {} ;
\path (1,1.2) node (p4)  {} ;
\path (-1,1) node (p5)  {} ;

\draw[lightgray] (p1)  --  (p2);
\draw (p1)  --  (p3);
\draw[lightgray] (p1)  --  (p4);
\draw (p1)  --  (p5);
\draw (p2)  --  (p3);
\draw[lightgray] (p3)  --  (p4);
\draw (p4)  --  (p5);
\draw[lightgray] (p5)  --  (p2);

\end{tikzpicture} 
\hspace{18mm}
\begin{tikzpicture}[very thick,scale=0.8]
\tikzstyle{every node}=[circle, draw=black, fill=white, inner sep=0pt, minimum width=5pt];

\node[rectangle, draw=white,fill=white](l2) at (0,0.98) {$v_0$};
\node[rectangle, draw=white,fill=white,above left](l1) at (-1.3,0) {$v_1$};
\node[rectangle, draw=white,fill=white,above right](l2) at (1.3,0) {$sv_1$};
\node[rectangle, draw=white,fill=white,below](l1) at (-1.68,-0.8) {$v_2$};
\node[rectangle, draw=white,fill=white,below](l2) at (1.72,-0.8) {$sv_2$};

\path (0,0.7) node (p1)  {} ;
\path (-1.2,0) node (p2)  {} ;
\path (1.2,0) node (p3)  {} ;
\path (-1.3,-0.9) node (p4)  {} ;
\path (1.3,-0.9) node (p6)  {} ;

\draw (p1)  --  (p2);
\draw (p1)  --  (p3);
\draw[lightgray] (p1)  --  (p4);
\draw[lightgray] (p1)  --  (p6);
\draw[lightgray] (p2)  --  (p4);
\draw[lightgray] (p3)  --  (p6);
\draw (p4)  --  (p3);
\draw (p6)  --  (p2);

\end{tikzpicture} 
     \end{center}
\vspace{-0.3cm}
     \caption{Isostatic placements of the wheel graph $W_5$ with induced $\bZ_2$-symmetric monochrome spanning trees.}
\label{fig:wheel}
\end{figure}
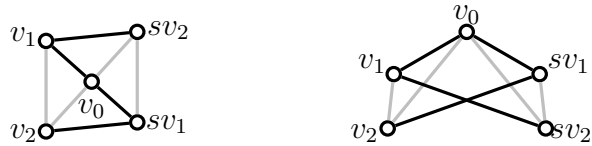
\end{example}

\subsection{Rigidity preservation for allowable graph extensions}
Let $\|\cdot\|_\P$ be a norm on $\bR^2$ for which the unit ball $\P$ is a quadrilateral with facets $\pm F_1$ and $\pm F_2$.
Let $G$ be a finite simple graph and let $\theta:\bZ_2\to \Aut(G)$ be an action of the group $\bZ_2=\langle s\rangle$. 

\begin{proposition}
\label{RigidityPreservation1}
Suppose there exists a graph $H$ with the following properties.
\begin{enumerate}[(i)]
\item $G$ is obtained from $H$ by either a $(\bZ_2,\theta)$ $0$-extension or a  $(\bZ_2,\theta)$ $1$-extension.
\item There exists a faithful representation $\tau:\bZ_2\to \GL(\bR^2)$ and a point $p_H$ such that  $(H,p_H)$ is a well-positioned and isostatic  bar-joint  framework in $(\mathbb{R}^2,\|\cdot\|_\P)$ which is $\bZ_2$-symmetric with respect to $\theta$ and $\tau$, where $s$ preserves the facets of $\P$.
\end{enumerate}
Then there exists $p$ such that $(G,p)$ is a well-positioned and isostatic bar-joint framework in $(\mathbb{R}^2,\|\cdot\|_\P)$ which is $\bZ_2$-symmetric with respect to $\theta$ and $\tau$. 
\end{proposition}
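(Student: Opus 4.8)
The plan is to realise $(G,p)$ as an extension of $(H,p_H)$: keep $p(w)=p_H(w)$ for $w\in V(H)$ and choose the position $p(v)$ of the new vertex, with $p(sv)=\tau(s)p(v)$ then forced. For any $p(v)$ lying off the (measure zero) fixed set of $\tau(s)$ and off the finitely many points $p_H(w)$ and $\tau(s)p_H(w)$, this $p$ defines a genuine bar-joint framework which is $\bZ_2$-symmetric with respect to $\theta$ and $\tau$. Since $(H,p_H)$ is well-positioned and isostatic, Theorem~\ref{SpanningTreeThm} gives an edge-disjoint decomposition $H=H_{F_1}\cup H_{F_2}$ into monochrome spanning trees, and since $s$ preserves the facets of $\P$, Lemma~\ref{Compatible} shows that $H_{F_1}$ and $H_{F_2}$ are $\bZ_2$-symmetric. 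By Theorem~\ref{SpanningTreeThm} applied to $G$, it then suffices to choose $p(v)$ so that $(G,p)$ is well-positioned and its two monochrome subgraphs are edge-disjoint spanning trees of $G$.

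For the placement step I would pass to linear coordinates adapted to $\P$ in which the cones over the relative interiors of $F_1$ and $-F_1$ form a pair of opposite open quadrants and those over $F_2$ and $-F_2$ the complementary pair; an edge $xy$ then has framework colour $[F_1]$ precisely when the two coordinates of $p(x)-p(y)$ are nonzero of equal sign, and colour $[F_2]$ precisely when they are nonzero of opposite sign. The basic observation is that, by sending $p(v)$ to infinity along one coordinate axis with the other coordinate placed in a suitable open interval, one can realise the colour $[F_1]$ on the edges from $v$ to all but one of a given finite set of points and the colour $[F_2]$ on the remaining one, the only obstruction being that the exceptional point must have one coordinate strictly extremal among the set (equivalently, it must not lie in the closed coordinate-box of the others). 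For the $0$-extension this is immediate: since $p(v_1)\neq p(v_2)$ they differ in some coordinate, so $p(v)$ can be placed with $vv_1$ coloured $[F_1]$ and $vv_2$ coloured $[F_2]$, whence $G_{F_1}=H_{F_1}\cup\{vv_1,s(vv_1)\}$ and $G_{F_2}=H_{F_2}\cup\{vv_2,s(vv_2)\}$ are $\bZ_2$-symmetric spanning trees of $G$ (leaf attachments to symmetric trees), and $(G,p)$ is isostatic by Theorem~\ref{SpanningTreeThm}.

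The $1$-extension is the substantive case. The deleted edge $e=v_1v_2$ lies in one of the monochrome trees, say $H_{F_1}$ (relabel the colours otherwise), and then $se\in H_{F_1}$ as well since $H_{F_1}$ is $\bZ_2$-symmetric. Deleting $e$ and $se$ from the tree $H_{F_1}$ leaves exactly three components; the involution $s$ fixes exactly one of them, the ``middle'' component $C_0$ of the resulting path of components, and swaps the other two, so after relabelling we may assume $v_1\notin C_0$ and $v_2\in C_0$. First, the colour pattern in which $vv_1,vv_2$ are coloured $[F_1]$ and $vv_3$ is coloured $[F_2]$ always produces monochrome spanning trees $G_{F_1}=(H_{F_1}\setminus\{e,se\})\cup\{vv_1,vv_2,s(vv_1),s(vv_2)\}$ and $G_{F_2}=H_{F_2}\cup\{vv_3,s(vv_3)\}$ of $G$; acyclicity of $G_{F_1}$ is checked by contracting each of the three components to a point, which turns $G_{F_1}$ into a path on five vertices. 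Secondly, whichever of the patterns ``$vv_2,vv_3$ coloured $[F_1]$, $vv_1$ coloured $[F_2]$'' (valid when $v_3\notin C_0$) or ``$vv_1,vv_3$ coloured $[F_1]$, $vv_2$ coloured $[F_2]$'' (valid when $v_3\in C_0$) applies, it likewise produces monochrome spanning trees, by the same contraction argument. Now the pattern in which $vv_j$ carries the colour $[F_2]$ is geometrically realisable precisely when $p(v_j)$ is not in the closed coordinate-box of the other two of $p(v_1),p(v_2),p(v_3)$; and two \emph{distinct} points cannot each lie in the box of their complementary pair, since a one-dimensional argument along each coordinate axis forces the two points to agree. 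Hence at least one of the two combinatorially valid patterns is geometrically realisable; placing $p(v)$ to realise it and invoking Theorem~\ref{SpanningTreeThm} finishes the proof.

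The hard part will be exactly this last point in the $1$-extension: guaranteeing that some valid monochrome decomposition of $G$ can actually be attained by a single choice of $p(v)$. It is overcome by combining the coordinate-box criterion for realisability with the combinatorial freedom in which of $v_1,v_2,v_3$ receives the differently-coloured edge; the remaining verifications — well-positionedness near a generic far-out $p(v)$, the $\bZ_2$-symmetry of $(G,p)$, and the two acyclicity checks via component contraction — are routine.
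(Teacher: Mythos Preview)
Your argument is correct, but for the $1$-extension you work harder than necessary. The paper exploits a geometric fact you overlook: since $e=v_1v_2$ already has framework colour $[F_1]$, every point on the line $L_1$ through $p(v_1)$ and $p(v_2)$ (other than those two points) lies in the open $[F_1]$-cone relative to both. The paper simply intersects $L_1$ with the line $\{p(v_3)+tx_2:t\in\bR\}$ for some $x_2$ in the relative interior of $F_2$, and places $p(v)$ in a small ball about that intersection point. This \emph{always} realises the single pattern $vv_1,vv_2\in[F_1]$, $vv_3\in[F_2]$, and the resulting monochrome subgraphs $G_{F_1}=(H_{F_1}\setminus\{e,se\})\cup\{vv_1,vv_2,s(vv_1),s(vv_2)\}$ and $G_{F_2}=H_{F_2}\cup\{vv_3,s(vv_3)\}$ are immediately spanning trees. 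No component analysis, no box criterion, and no case split on the location of $v_3$ is needed.

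Your placement-at-infinity strategy, by contrast, genuinely fails to realise that preferred pattern whenever $p(v_3)$ sits inside the coordinate box of $p(v_1),p(v_2)$, so the detour through the alternative colour patterns and the pigeonhole lemma on boxes is a real cost of your method rather than an artefact of exposition. Your analysis of the three components $C_0,C_+,C_-$ of $H_{F_1}\setminus\{e,se\}$ and of which alternative patterns produce spanning trees is correct, and the observation that two distinct points cannot each lie in the coordinate box of their complementary pair is a clean elementary fact. Both proofs are valid; the paper's line-intersection placement is just shorter and sidesteps all the auxiliary combinatorics.
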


\proof
Note that $V(G)=V(H)\cup \{v,sv\}$ for some $v,sv\notin V(H)$.
Define $p(w)=p_H(w)$ for all $w\in V(H)$ and, once $p(v)$ has been chosen, define 
$p(sv)=\tau(s)(p(v))$. 
Then $(G,p)$ is $\bZ_2$-symmetric with respect to $\theta$ and $\tau$. 
It remains to specify $p(v)$.
Choose points $x_1$ and $x_2$ in the relative interiors of $F_1$ and $F_2$ respectively. 

If $G$ is  obtained by a $(\bZ_2,\theta)$ $0$-extension on vertices $v_1, v_2\in V(H)$ then let $a\in \bR^2$ be the point of intersection of the lines $L_1=\{p(v_1)+tx_1:t\in \bR\}$ and $L_2=\{p(v_2)+tx_2:t\in \bR\}$ and
let $B(a,r)$ be an open  ball with centre $a$ and radius $r>0$.
Choose $p(v)$ to be any point in $B(a,r)\backslash p(V(H))$ which is not 
fixed by $\tau(s)$. If $r$ is sufficiently small then 
the induced framework colourings for $vv_1$ and $vv_2$ are $[F_1]$ and $[F_2]$ respectively. 
Thus, $(G,p)$ is well-positioned  with monochrome subgraphs  $G_{F_1}=H_{F_1}\cup \{vv_1,s(vv_1)\}$ and $G_{F_2}=H_{F_2}\cup \{vv_2,s(vv_2)\}$. 

\begin{figure}[htp]
\begin{center}
\begin{tikzpicture}[very thick,scale=1.4]
\tikzstyle{every node}=[circle, draw=black, fill=white, inner sep=0pt, minimum width=5pt];
\filldraw[fill=black!03!white, draw=black, thin, dashed](-1.6,0)circle(0.35cm);
\node[rectangle, draw=white,fill=white](l1) at (-0.45,-0.45) {\small$p(v_2)$};
\node[rectangle, draw=white,fill=white](l2) at (0.65,-0.45) {\small$p(sv_2)$};
\node[rectangle, draw=white,fill=white](l3) at (-0.5,0.6) {\small$p(v_1)$};
\node[rectangle, draw=white,fill=white](l4) at (0.45,0.6) {\small$p(sv_1)$};
\node (p1) at (-0.5,0.4) {};
\node (p2) at (0.5,0.4) {};
\node (p3) at (-0.72,-0.67) {};
\node (p4) at (0.72,-0.67) {};

\node[rectangle, draw=black!03!white,fill=black!03!white](l4) at (-1.75,0) {$a$};
\node[rectangle, draw=white,fill=white](l4) at (-2.3,-0.34) {\small$B(a,r)$};
\filldraw[fill=black, draw=black, thin](-1.6,0)circle(0.04cm);
\draw[dashed] (p1)--(-1.6,0);
\draw[dashed] (p3)--(-1.6,0);
\node[rectangle, draw=white,fill=white](l4) at (-0.93,0.04) {\small $L_1$};
\node[rectangle, draw=white,fill=white](l4) at (-1.19,-0.55) {\small $L_2$};
\end{tikzpicture}
\hspace{1.2cm}
\begin{tikzpicture}[very thick,scale=1.4]
\tikzstyle{every node}=[circle, draw=black, fill=white, inner sep=0pt, minimum width=5pt];
\filldraw[fill=black!03!white, draw=black, thin, dashed](-1.6,0)circle(0.35cm);

\node[rectangle, draw=white,fill=white](l1) at (-0.45,-0.45) {\small$p(v_2)$};
\node[rectangle, draw=white,fill=white](l2) at (0.42,-0.45) {\small$p(sv_2)$};
\node[rectangle, draw=white,fill=white](l3) at (-0.5,0.6) {\small$p(v_1)$};
\node[rectangle, draw=white,fill=white](l4) at (0.4,0.6) {\small$p(sv_1)$};
\node (p1) at (-0.5,0.4) {};
\node (p2) at (0.5,0.4) {};
\node (p3) at (-0.72,-0.67) {};
\node (p4) at (0.72,-0.67) {};
\node (p6) at (1.44,0.2) {};
\node (p7) at (-1.44,0.2) {};

\node[rectangle, draw=black!03!white,fill=black!03!white](l4) at (-1.63,-0.05) {\small$p(v)$};
\node[rectangle, draw=white,fill=white](l4) at (1.7,-0.03) {\small$p(sv)$};
\draw(p7)--(p1);
\draw[lightgray](p7)--(p3);
\draw(p6)--(p2);
\draw[lightgray](p6)--(p4);
\end{tikzpicture}
\end{center}
\caption{\emph{Constructing a placement for a $(\bZ_2,\theta)$ $0$-extension.}}
\label{1ExtPlacement}
\end{figure}
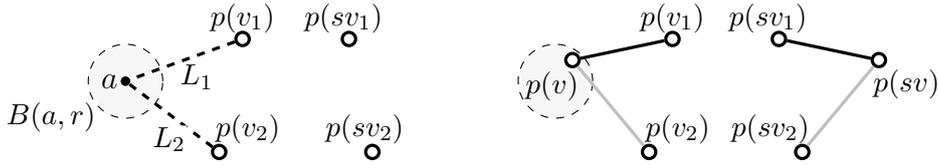

If $G$ is  obtained by a $(\bZ_2,\theta)$ $1$-extension on vertices $v_1,v_2,v_3\in V(H)$ and the edge $e=v_1v_2\in E(H)$ 
then, without loss of generality, suppose $e\in E(H_{F_1})$. 
Let $a\in \bR^2$ be the point of intersection of the line $L_1$ which passes through the points $p(v_1)$ and $p(v_2)$ with $L_2=\{p(v_3)+tx_2:t\in\bR\}$.
Let $B(a,r)$ be the open  ball with centre $a$ and radius $r>0$ and choose $p(v)$ to be a point in $B(a,r)\backslash p(V(H))$ which is not fixed by $\tau(s)$.
If $r$ is sufficiently small then $vv_1$ and $vv_2$ have induced framework colour $[F_1]$ and $vv_3$ has framework colour $[F_2]$.
Then $(G,p)$ is well-positioned with monochrome subgraphs  $G_{F_1}=(H_{F_1}\backslash\{e,se\})\cup \{vv_1,vv_2,s(vv_1),s(vv_2)\}$ and $G_{F_2}=H_{F_2}\cup \{vv_3,s(vv_3)\}$.

In both cases, the monochrome subgraphs $G_{F_1}$ and $G_{F_2}$ induced by $p$
are spanning trees in $G$. Thus, by Theorem \ref{SpanningTreeThm}, $(G,p)$ is isostatic. 
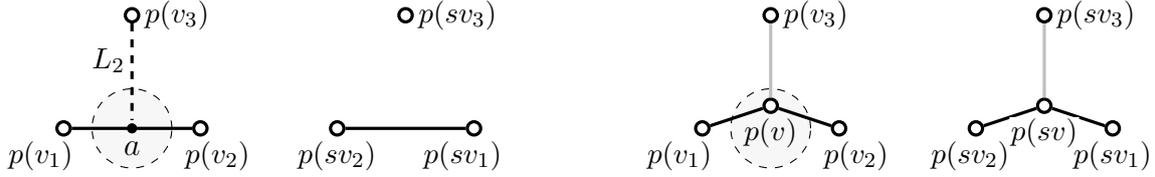
\begin{figure}[htp]
\begin{center}
\begin{tikzpicture}[very thick,scale=1.5]
\tikzstyle{every node}=[circle, draw=black, fill=white, inner sep=0pt, minimum width=5pt];

\node (p1) at (-2.6,0) {};
\node (p2) at (-1.4,0) {};
\node (p3) at (-2,1) {};
\node (p4) at (-2,0) {};
\node (p5) at (1,0) {};
\node (p6) at (-0.2,0) {};
\node (p7) at (0.4,1) {};

\filldraw[fill=black!03!white, draw=black, thin, dashed](p4)circle(0.35cm);
\draw[dashed] (p3)--(p4);
\draw(p1)--(p2);
\draw(p5)--(p6);

\node[rectangle, draw=white,fill=white,below](l1) at (-1.25,-0.1) {\small $p(v_2)$};
\node[rectangle, draw=white,fill=white,below](l2) at (-0.2,-0.1) {\small $p(sv_2)$};
\node[rectangle, draw=white,fill=white,below](l3) at (-2.8,-0.1) {\small$p(v_1)$};
\node[rectangle, draw=white,fill=white,below](l4) at (0.9,-0.1) {\small$p(sv_1)$};
\node[rectangle, draw=white,fill=white,right](l3) at (-1.9,1) {\small$p(v_3)$};
\node[rectangle, draw=white,fill=white,right](l4) at (0.5,1) {\small$p(sv_3)$};
\node[rectangle, draw=black!03!white,fill=black!03!white,below](l4) at (-2,-0.1) {$a$};
\filldraw[fill=black, draw=black, thin](p4)circle(0.04cm);

\node[rectangle, draw=white,fill=white, left](l4) at (-2.05,0.6) {\small $L_2$};
\end{tikzpicture}
\hspace{1.6cm}
\begin{tikzpicture}[very thick,scale=1.5]
\tikzstyle{every node}=[circle, draw=black, fill=white, inner sep=0pt, minimum width=5pt];

\node (p1) at (-2.6,0) {};
\node (p2) at (-1.4,0) {};
\node (p3) at (-2,1) {};
\node (p4) at (-2,0) {};
\node (p5) at (1,0) {};
\node (p6) at (-0.2,0) {};
\node (p7) at (0.4,1) {};
\filldraw[fill=black!03!white, draw=black, thin, dashed](p4)circle(0.35cm);
\node (p8) at (-2,0.2) {};
\node (p9) at (0.4,0.2) {};

\draw(p1)--(p8);
\draw(p2)--(p8);
\draw[lightgray](p3)--(p8);
\draw(p5)--(p9);
\draw(p6)--(p9);
\draw[lightgray](p7)--(p9);

\node[rectangle, draw=white,fill=white,below](l1) at (-1.25,-0.1) {\small$p(v_2)$};
\node[rectangle, draw=white,fill=white,below](l2) at (-0.25,-0.1) {\small$p(sv_2)$};
\node[rectangle, draw=white,fill=white,below](l3) at (-2.8,-0.1) {\small$p(v_1)$};
\node[rectangle, draw=white,fill=white,below](l4) at (1,-0.1) {\small$p(sv_1)$};
\node[rectangle, draw=white,fill=white,right](l3) at (-1.9,1) {\small$p(v_3)$};
\node[rectangle, draw=white,fill=white,right](l4) at (0.5,1) {\small$p(sv_3)$};
\node[rectangle, draw=black!03!white,fill=black!03!white,below](l4) at (-2,0.1) {\small $p(v)$};
\node[rectangle, draw=white,fill=white,below](l4) at (0.4,0.1) {\small$p(sv)$};

\end{tikzpicture}
\end{center}
\caption{\emph{Constructing a placement for a $(\bZ_2,\theta)$ $1$-extension.}}
\label{2ExtPlacement}
\end{figure}
\endproof

\begin{proposition}
\label{RigidityPreservation2}
Suppose there exists a graph $H$ with the following properties.
\begin{enumerate}[(i)]
\item $G$ is obtained from $H$ by a modified $(\bZ_2,\theta)$ $1$-extension.
\item There exists a faithful representation $\tau:\bZ_2\to \GL(\bR^2)$ and a point $p_H$ such that  $(H,p_H)$ is a well-positioned and isostatic  bar-joint  framework in $(\mathbb{R}^2,\|\cdot\|_\P)$ which is $\bZ_2$-symmetric with respect to $\theta$ and $\tau$, where $s$ preserves the facets of $\P$.
\end{enumerate}
Then there exists $p$ such that $(G,p)$ is a well-positioned and isostatic bar-joint framework in $(\mathbb{R}^2,\|\cdot\|_\P)$ which is $\bZ_2$-symmetric with respect to $\theta$ and $\tau$. 
\end{proposition}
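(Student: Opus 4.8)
\proof
The plan is to follow the template of Proposition~\ref{RigidityPreservation1}. By Theorem~\ref{SpanningTreeThm}, since $(H,p_H)$ is isostatic, $H$ is the edge-disjoint union of its two monochrome spanning trees, and since $s$ preserves the facets both of these are $\bZ_2$-symmetric by Lemma~\ref{Compatible}. The deleted edge $e=v_1(sv_2)$ lies in one of them; relabelling the facets of $\P$ we may assume $e\in E(H_{F_1})$, whence also $se=(sv_1)v_2\in E(H_{F_1})$ and $e\neq se$. I would keep the placement of $H$ fixed, setting $p(w)=p_H(w)$ for all $w\in V(H)$, and put $p(sv)=\tau(s)p(v)$ once $p(v)$ has been chosen, so that $(G,p)$ is automatically $\bZ_2$-symmetric with respect to $\theta$ and $\tau$; by Theorem~\ref{SpanningTreeThm} it then suffices to choose $p(v)$ so that $(G,p)$ is well-positioned and its monochrome subgraphs are spanning trees.

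Deleting the two edges $e,se$ from the spanning tree $H_{F_1}$ leaves a forest on $V(H)$ with exactly three components, and since $s$ fixes $H_{F_1}$ and interchanges $e$ and $se$ it permutes these three components, the unique $s$-fixed vertex (Lemma~\ref{FixedVertexLemma}) lying in an $s$-fixed one. Because $e$ joins $v_1$ to $sv_2$, the vertices $v_1,sv_2$ lie in distinct components, and likewise $sv_1,v_2$. Running the same bookkeeping as in the graph arguments of Lemmas~\ref{0VertexLemma} and \ref{1VertexLemma} (now in the forward direction and using that $H$ is admissible), I would identify a pair $\{i,j\}\subset\{1,2,3\}$, with $\{k\}=\{1,2,3\}\setminus\{i,j\}$, for which $G_1^{\ast}:=(H_{F_1}\setminus\{e,se\})\cup\{vv_i,vv_j,(sv)(sv_i),(sv)(sv_j)\}$ is connected --- hence, having $|V(G)|-1$ edges, a spanning tree of $G$ --- while $G_2^{\ast}:=H_{F_2}\cup\{vv_k,(sv)(sv_k)\}$ is a spanning tree since $v$ and $sv$ are attached to $H_{F_2}$ as pendant vertices. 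This combinatorial step is the point where the modified $1$-extension really differs from the ordinary one: deleting $e=v_1v_2$ in an ordinary $1$-extension automatically separates $v_1$ from $v_2$, whereas $e=v_1(sv_2)$ need not, so one must genuinely track how $e$ and $se$ sit inside $H_{F_1}$; I expect this to be the main obstacle.

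It then remains to realise the colours $[F_1]$ on $vv_i,vv_j$ and $[F_2]$ on $vv_k$. With $\pm F_1,\pm F_2$ the facets of $\P$ and points $x_1,x_2$ chosen in the relative interiors of $F_1,F_2$, I would place $p(v)$ in the interior of the intersection of the cone over $\pm F_1$ translated to $p(v_i)$, the cone over $\pm F_1$ translated to $p(v_j)$, and the cone over $\pm F_2$ translated to $p(v_k)$; a general-position argument, of the same flavour as the placements in Proposition~\ref{RigidityPreservation1} (locating $p(v)$ near a suitable intersection point of two lines in facet directions, inside a small ball), shows this region is nonempty, and after deleting the finitely many lines through $p(v_1),p(v_2),p(v_3)$ parallel to the vertex directions of $\P$, the finite set $p_H(V(H))$, and the locus of $\tau(s)$-fixed points, any remaining $p(v)$ works. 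Since $s$ preserves the facets, Lemma~\ref{Compatible} then forces $(sv)(sv_i),(sv)(sv_j)$ to have colour $[F_1]$ and $(sv)(sv_k)$ to have colour $[F_2]$, so the monochrome subgraphs of $(G,p)$ are exactly $G_1^{\ast}$ and $G_2^{\ast}$. By Theorem~\ref{SpanningTreeThm}, $(G,p)$ is isostatic, which completes the proof.
\endproof
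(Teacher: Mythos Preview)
Your overall template matches the paper's, but the heart of your argument---first choose $\{i,j\}$ combinatorially, then realise the colours $[F_1]$ on $vv_i,vv_j$ and $[F_2]$ on $vv_k$ geometrically---has a genuine gap in the geometric step. The intersection of the $[F_1]$-cone at $p(v_i)$, the $[F_1]$-cone at $p(v_j)$, and the $[F_2]$-cone at $p(v_k)$ can be empty: for instance, with the $\ell^\infty$ norm and $p(v_i)=(60,0)$, $p(v_j)=(60,100)$, $p(v_k)=(61,50)$ (a configuration compatible with $e=v_1(sv_2)\in H_{F_1}$ and the reflection $\tau(s)(x,y)=(-x,y)$), no point $(x,y)$ satisfies $|x-60|>|y|$, $|x-60|>|y-100|$ and $|x-61|<|y-50|$ simultaneously. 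The ``two lines in facet directions'' idea from Proposition~\ref{RigidityPreservation1} does not transfer: there the deleted edge $e=v_1v_2$ already lies in $H_{F_1}$, so $p(v_1),p(v_2)$ are on a common $F_1$-line; here $e=v_1(sv_2)$ gives no such alignment between $p(v_1)$ and $p(v_2)$, and two distinct $F_1$-lines are parallel.

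The paper's proof avoids this by reversing the order of the two steps and introducing an extra degree of freedom. It fixes only the colour of $vv_2$ (in Case~1, say) by placing $p(v)$ near the intersection of an $F_1$-line through $p(v_2)$ with a line through $p(v_3)$ in an \emph{extreme point} direction $y_1$ of $\P$. Because $y_1$ lies on the boundary between the two facet cones, a small perturbation on either side of this line makes $vv_3$ have colour $[F_1]$ or $[F_2]$ at will; one then reads off whatever colour $vv_1$ happens to get and chooses the colour of $vv_3$ to complement it. The combinatorial check that \emph{both} resulting colourings yield spanning trees (using the path $P$ from $v_3$ to $v_1$ or to $sv_2$ in $H_{F_1}\setminus\{e\}$) is done afterwards. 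So the paper lets the geometry select one of two colourings and verifies either works, whereas you commit to one colouring first and then cannot always realise it.
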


\proof
Suppose $G$ is  obtained by  a modified $(\bZ_2,\theta)$ $1$-extension on vertices $v_1,v_2,v_3\in V(H)$ and the edge $e=v_1(sv_2)\in E(H)$.  
Then  $V(G)=V(H)\cup \{v,sv\}$ for some $v,sv\notin V(H)$.
Define $p(w)=p_H(w)$ for all $w\in V(H)$.
It may be assumed that the induced framework colour of $e$ is $[F_1]$.
Let $y_1$ be an extreme point of $\P$ and choose a point $x_1$ in the relative interior of $F_1$.
Since $H_{F_1}$ is a spanning tree in $H$, there must exist a path $P$ in $H_{F_1}\backslash\{e\}$ joining $v_3$ to either $v_1$, or,  $sv_2$. 

{\bf Case $(1)$}. Suppose  $P$ joins $v_3$ to $v_1$.
Let $a\in \bR^2$ be the point of intersection of the lines $L_1=\{p(v_2)+tx_1:t\in \bR\}$ and $L_2=\{p(v_3)+ty_1:t\in \bR\}$ and let $B(a,r)$ be an open  ball with centre $a$ and radius $r>0$.
Choose $p(v)$ to be a point in $B(a,r)\backslash p(V(H))$ which is not fixed by $\tau(s)$.
If $r$ is sufficiently small then the induced framework colour of $vv_2$ is $[F_1]$.
Set $p(sv)=\tau(s)(p(v))$.
Then $(G,p)$ is both well-positioned  and $\bZ_2$-symmetric with respect to $\theta$ and $\tau$.
Note that since $a\in L_2$, it is possible to choose $p(v)$ in the open ball $B(a,r)$ such that
$vv_3$ has framework colour either $[F_1]$ or $[F_2]$.
If the edge $vv_1$ has framework colour $[F_1]$ then choose $p(v)$ such that
$vv_3$ has framework colour $[F_2]$.
If the edge $vv_1$ has framework colour $[F_2]$ then choose $p(v)$ such that
$vv_3$ has framework colour $[F_1]$.
Then $(G,p)$ is well-positioned and $\bZ_2$-symmetric.
Also, $G_{F_1}$ and $G_{F_2}$ are spanning subgraphs of $G$ with $|V(G)|-1$ edges.
Clearly, $G_{F_2}$ is a tree.
To see that $G_{F_1}$ is a tree it remains to show that it is connected.
There exists a path $P_v$ in $G_{F_1}$ containing $v,v_1,v_2$ since either $vv_1$ and $vv_2$, or, $vv_2$ and $vv_3$ are edges of $G_{F_1}$ and $P$ joins $v_3$ to $v_1$.
Similarly, $s(P_v)$ is a path in $G_{F_1}$ containing $sv,sv_1,sv_2$. 
All remaining vertices in $G\backslash\{v,sv\}$ can be joined to either $v_1$ or $sv_2$ by a path in $G_{F_1}$.
In particular, there exists a path $P_0$ in $G_{F_1}$ from $v_0$ to either $v_1$ or $sv_2$. 
Thus $P_v\cup s(P_v)\cup P_0\cup s(P_0)$ contains a path in $G_{F_1}$ joining $v_1$ to $sv_2$. It follows that every vertex of $G$ can be joined to $v_1$ by a path in $G_{F_1}$ and so $G_{F_1}$ is connected. 

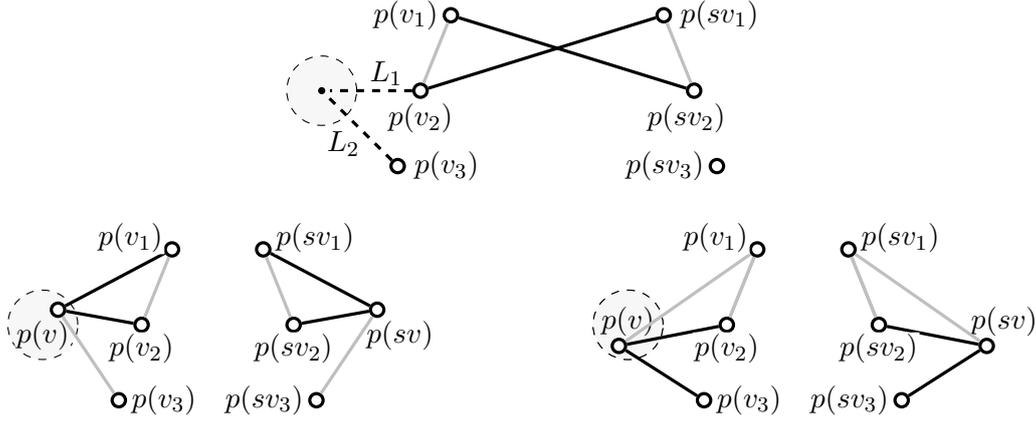
\begin{figure}[htp]
\begin{center}
\begin{tikzpicture}[very thick,scale=1]
\tikzstyle{every node}=[circle, draw=black, fill=white, inner sep=0pt, minimum width=5pt];
\node (p1) at (-1.8,0) {};
\node (p2) at (1.8,0) {};
\node (p3) at (-1.4,1) {};
\node (p4) at (1.4,1) {};
\node (p6) at (-2.1,-1) {};
\node (p7) at (2.1,-1) {};
\node (p8) at (-3.1,0) {};
\filldraw[fill=black!03!white, draw=black, thin, dashed](p8)circle(0.46cm);

\draw[dashed] (p6)--(p8);
\draw[dashed] (p1)--(p8);

\draw[lightgray](p1)--(p3);
\draw(p1)--(p4);
\draw[lightgray](p2)--(p4);
\draw(p2)--(p3);

\node[rectangle, draw=white,fill=white,right](l1) at (1.6,1) {\small $p(sv_1)$};
\node[rectangle, draw=white,fill=white,below](l2) at (1.7,-0.15) {\small $p(sv_2)$};
\node[rectangle, draw=white,fill=white,left](l3) at (-1.55,1) {\small$p(v_1)$};
\node[rectangle, draw=white,fill=white,below](l4) at (-1.8,-0.15) {\small$p(v_2)$};
\node[rectangle, draw=white,fill=white,right](l3) at (-1.9,-1) {\small$p(v_3)$};
\node[rectangle, draw=white,fill=white,left](l4) at (1.95,-1) {\small$p(sv_3)$};

\filldraw[fill=black, draw=black, thin](p8)circle(0.04cm);

\node[rectangle, draw=white,fill=white,above](l4) at (-2.25,0.05) {\small $L_1$};
\node[rectangle, draw=white,fill=white, left](l4) at (-2.58,-0.7) {\small $L_2$};
\end{tikzpicture}
\\~\\
\begin{tikzpicture}[very thick,scale=1]
\tikzstyle{every node}=[circle, draw=black, fill=white, inner sep=0pt, minimum width=5pt];

\node (p1) at (-1,0) {};
\node (p2) at (1,0) {};
\node (p3) at (-0.6,1) {};
\node (p4) at (0.6,1) {};
\node (p6) at (-1.3,-1) {};
\node (p7) at (1.3,-1) {};
\node (p8) at (-2.3,0) {};
\filldraw[fill=black!03!white, draw=black, thin, dashed](p8)circle(0.46cm);
\node (p9) at (-2.1,0.2) {};
\node (p10) at (2.1,0.2) {};

\node[rectangle, draw=black!03!white,fill=black!03!white,below](l4) at (-2.3,0.07) {\small $p(v)$};
\node[rectangle, draw=white,fill=white,below](l4) at (2.4,0.07) {\small$p(sv)$};

\draw[lightgray](p1)--(p3);
\draw[lightgray](p2)--(p4);
\draw[lightgray](p6)--(p9);
\draw(p3)--(p9);
\draw(p1)--(p9);
\draw[lightgray](p7)--(p10);
\draw(p4)--(p10);
\draw(p2)--(p10);

\node[rectangle, draw=white,fill=white,below](l1) at (-1,-0.1) {\small $p(v_2)$};
\node[rectangle, draw=white,fill=white,below](l2) at (1,-0.1) {\small $p(sv_2)$};
\node[rectangle, draw=white,fill=white,left](l3) at (-0.7,1.15) {\small$p(v_1)$};
\node[rectangle, draw=white,fill=white,right](l4) at (0.75,1.15) {\small$p(sv_1)$};
\node[rectangle, draw=white,fill=white,right](l3) at (-1.15,-1) {\small$p(v_3)$};
\node[rectangle, draw=white,fill=white,left](l4) at (1.15,-1) {\small$p(sv_3)$};
\end{tikzpicture}
\hspace{1.8cm}
\begin{tikzpicture}[very thick,scale=1]
\tikzstyle{every node}=[circle, draw=black, fill=white, inner sep=0pt, minimum width=5pt];
\filldraw[fill=black!03!white, draw=black, thin, dashed](p8)circle(0.46cm);

\node (p1) at (-1,0) {};
\node (p2) at (1,0) {};
\node (p3) at (-0.6,1) {};
\node (p4) at (0.6,1) {};
\node (p6) at (-1.3,-1) {};
\node (p7) at (1.3,-1) {};
\node (p8) at (-2.3,0) {};
\filldraw[fill=black!03!white, draw=black, thin, dashed](p8)circle(0.46cm);
\node (p9) at (-2.42,-0.28) {};
\node (p10) at (2.42,-0.28) {};

\node[rectangle, draw=black!03!white,fill=black!03!white,below](l4) at (-2.3,0.26) {\small $p(v)$};
\node[rectangle, draw=white,fill=white,below](l4) at (2.65,0.26) {\small$p(sv)$};

\draw[lightgray](p1)--(p3);
\draw[lightgray](p2)--(p4);
\draw[lightgray](p1)--(p3);
\draw[lightgray](p2)--(p4);
\draw(p6)--(p9);
\draw[lightgray](p3)--(p9);
\draw(p1)--(p9);
\draw(p7)--(p10);
\draw[lightgray](p4)--(p10);
\draw(p2)--(p10);

\node[rectangle, draw=white,fill=white,below](l1) at (-1,-0.1) {\small $p(v_2)$};
\node[rectangle, draw=white,fill=white,below](l2) at (1,-0.1) {\small $p(sv_2)$};
\node[rectangle, draw=white,fill=white,left](l3) at (-0.7,1.15) {\small$p(v_1)$};
\node[rectangle, draw=white,fill=white,right](l4) at (0.75,1.15) {\small$p(sv_1)$};
\node[rectangle, draw=white,fill=white,right](l3) at (-1.15,-1) {\small$p(v_3)$};
\node[rectangle, draw=white,fill=white,left](l4) at (1.15,-1) {\small$p(sv_3)$};

\end{tikzpicture}
\end{center}
\caption{\emph{Constructing a placement for a modified $(\bZ_2,\theta)$ $1$-extension.}}
\label{Aug2ExtPlacement}
\end{figure}

{\bf Case $(2)$}. Suppose $P$ joins $v_3$ to $sv_2$.
Let $a\in \bR^2$ be the point of intersection of the lines $L_1=\{p(v_1)+tx_1:t\in \bR\}$ and $L_2=\{p(v_3)+ty_1:t\in \bR\}$ and let $B(a,r)$ be the open  ball with centre $a$ and radius $r>0$.
Arguments similar to Case $(1)$ may now be applied.

In both cases, the induced monochrome subgraphs $G_{F_1}$ and $G_{F_2}$ are spanning trees in $G$.
Thus, by Theorem \ref{SpanningTreeThm}, $(G,p)$ is isostatic. 

\endproof

\begin{proposition}
\label{RigidityPreservation3}
Suppose there exists a graph $H$ with the following properties.
\begin{enumerate}[(i)]
\item $G$ is obtained from $H$ by a $(\bZ_2,\theta)$ fixed-vertex-to-$W_5$ extension.
\item There exists a faithful representation $\tau:\bZ_2\to \GL(\bR^2)$ and  a point $p_H$ such that  $(H,p_H)$ is a well-positioned and isostatic  bar-joint  framework in $(\mathbb{R}^2,\|\cdot\|_\P)$ which is $\bZ_2$-symmetric with respect to $\theta$ and $\tau$, where $s$ preserves the facets of $\P$.
\end{enumerate}
Then there exists $p$ such that $(G,p)$ is a well-positioned and isostatic bar-joint framework in $(\mathbb{R}^2,\|\cdot\|_\P)$ which is $\bZ_2$-symmetric with respect to $\theta$ and $\tau$. 
\end{proposition}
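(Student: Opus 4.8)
The plan is to build the placement $p$ by inserting a shrunken, $\bZ_2$-symmetric copy of $W_5$ near the image of the fixed vertex $v_0$ while leaving $(H,p_H)$ otherwise undisturbed, and then to apply Theorem~\ref{SpanningTreeThm}. Since $(H,p_H)$ is isostatic and well-positioned, Theorem~\ref{SpanningTreeThm} writes $H$ as an edge-disjoint union of two monochrome spanning trees $H_{F_1},H_{F_2}$, and these are $\bZ_2$-symmetric by Lemma~\ref{Compatible} (as $s$ preserves the facets of $\P$). The point $p_H(v_0)$ is fixed by $\tau(s)$. By Lemma~\ref{W5Placement} there is a point $q$ with $(W_5,q)$ isostatic and $\bZ_2$-symmetric with respect to $\theta^*$ and $\tau$, and I may take $q(v_0)=p_H(v_0)$, the fixed point of $\tau(s)$. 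For $\epsilon>0$ let $q_\epsilon$ be the dilation of $q$ with centre $p_H(v_0)$ and ratio $\epsilon$, that is $q_\epsilon(u)=p_H(v_0)+\epsilon\bigl(q(u)-q(v_0)\bigr)$ for $u\in V(W_5)$; since this does not change edge directions, $(W_5,q_\epsilon)$ is again well-positioned, isostatic and $\bZ_2$-symmetric with the same monochrome spanning trees $(W_5)_{F_1},(W_5)_{F_2}$, and $q_\epsilon(v_0)=p_H(v_0)$. Define $p$ on $V(G)=V(H)\cup V(W_5)$ by $p|_{V(H)}=p_H$ and $p|_{V(W_5)}=q_\epsilon$; the two definitions agree at $v_0$, and since $\theta$ restricts to $\theta^*$ on $V(W_5)$ the framework $(G,p)$ is $\bZ_2$-symmetric with respect to $\theta$ and $\tau$.

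Next I would fix $\epsilon$ small. As $\epsilon\to0$ every vertex of the inserted $W_5$ tends to $p_H(v_0)$, so for small $\epsilon$ all the points $p(u)$, $u\in V(G)$, are distinct; moreover for each $w\in N(v_0)$ one has $p(w)-p(\pi(w))\to p_H(w)-p_H(v_0)$, which lies in the open conical hull of a single facet of $\P$ since $(H,p_H)$ is well-positioned, so for $\epsilon$ small the edge $w\pi(w)$ is well-positioned with framework colour equal to that of the edge $wv_0$ in $(H,p_H)$. The edges inside $W_5$ and the edges of $E(H)\setminus\{wv_0:w\in N(v_0)\}$ keep well-positioned placements, so $(G,p)$ is well-positioned; its monochrome subgraph $G_{F_i}$ consists of $(W_5)_{F_i}$, the edges of $H_{F_i}$ not meeting $v_0$, and the edges $w\pi(w)$ with $wv_0\in E(H_{F_i})$, and $G_{F_1},G_{F_2}$ are edge-disjoint.

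It remains to show that $G_{F_i}$ is a spanning tree for $i=1,2$, since then Theorem~\ref{SpanningTreeThm} yields that $(G,p)$ is isostatic. Deleting $v_0$ from the tree $H_{F_i}$ leaves a forest whose components correspond bijectively to the neighbours of $v_0$ in $H_{F_i}$, each component containing exactly one such neighbour $w$; in $G_{F_i}$ that component is reattached through the edge $w\pi(w)$ to the connected spanning tree $(W_5)_{F_i}$ of $V(W_5)$, so $G_{F_i}$ is connected, and it has $(|V(H)|-1)+(|V(W_5)|-1)=|V(G)|-1$ edges, hence is a spanning tree. I expect the main obstacle to be precisely this last step: simultaneously tracking how the forest components of $H_{F_i}\setminus v_0$ hook onto the vertices $\pi(w)$ of the inserted wheel, verifying connectivity and the edge count for both colours at once, while also ensuring that a single small $\epsilon$ makes every edge $w\pi(w)$ inherit the correct framework colour.
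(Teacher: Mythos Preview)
Your proposal is correct and follows essentially the same approach as the paper: both use Lemma~\ref{W5Placement} to obtain a $\bZ_2$-symmetric isostatic placement of $W_5$, translate it so that $v_0$ sits at $p_H(v_0)$, scale it into a small ball around that point, glue it to $p_H$, and then invoke Theorem~\ref{SpanningTreeThm} after checking that the framework colours of the reassigned edges $w\pi(w)$ agree with those of $wv_0$ for small enough scaling. Your treatment is in fact more explicit than the paper's in verifying that each $G_{F_i}$ is a spanning tree (the paper simply asserts this), and your anticipated ``obstacle'' is not a genuine difficulty since $N(v_0)$ is finite and the well-positioned condition is open.
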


\proof
Suppose $G$ is  obtained by a $(\bZ_2,\theta)$ fixed-vertex-to-$W_5$ extension on the vertex $v_0$. Then $W_5$ is a subgraph of $G$ which is $\bZ_2$-symmetric with respect to $\theta$ and $v_0$ is the unique vertex in $W_5$ which is fixed by $s$. 
Also, $V(G)=V(H)\cup V(W_5)$ and $V(H)\cap V(W_5)=\{v_0\}$.
Let $N(v_0)$ be the set of vertices in $H$ which are adjacent to $v_0$.
Let $B(p_H(v_0),r)$ be the open  ball with centre $p_H(v_0)$ and radius $r>0$.
There exists a $\bZ_2$-symmetric and isostatic placement $p^*$ of the graph $W_5$ as shown in Lemma \ref{W5Placement}.
By translating this placement it may be assumed that $p^*(v_0)=p_H(v_0)$.
By contracting this placement it may be further assumed that $p^*(V(W_5))$  is contained in $B(p_H(v_0),r)$.
Define $p$ as follows: Let $p(v)=p_H(v)$ for all $v\in V(H)$ and let $p(v)=p^*(v)$ for all $v\in V(W_5)$.
If $r$ is sufficiently small then given any edge $v_0w\in E(H)$, the  framework colouring of $wv_0$ induced by $p_H$ is the same as the framework colouring of the reassigned edge $wv_j$ induced by $p$.
Note that  $(G,p)$ is well-positioned and $\bZ_2$-symmetric.
Moreover, $G_{F_1}$ and $G_{F_2}$ are spanning trees in $G$
and so $(G,p)$ is isostatic, by Theorem \ref{SpanningTreeThm}. 
\endproof

\subsection{Proof of the main result.}

\begin{proof}[Proof of Theorem \ref{CsThm}]
$(A)$ $(i)\Rightarrow (ii)$ 
If  $(G,p)$ is well-positioned then $G$ is an edge-disjoint union of the monochrome subgraphs $G_{F_1}$ and $G_{F_2}$ induced by $p$. If $(G,p)$ is also isostatic then, by Theorem \ref{SpanningTreeThm}, $G_{F_1}$ and $G_{F_2}$ are spanning trees in $G$. By Lemma \ref{Compatible}, the monochrome subgraphs $G_{F_1}$ and $G_{F_2}$ are $\bZ_2$-symmetric with respect to $\theta$.
By \cite[Corollary~4.2 (ii)]{kit-sch}, no edge of $G$ is fixed by the reflection $s$.

$(ii)\Rightarrow (i)$
By hypothesis, $(G,\theta)$ is an admissible pair and so there exists a construction chain of graphs and allowable extensions as shown in Theorem \ref{ConstructionScheme},
 $W_5=G^1\to G^2 \to\cdots \to G^n = G$.
The restriction of $\theta$ to $V(W_5)$ is the unique action $\theta^*$.
Choose a representation $\tau:\bZ_2\to\GL(\bR^2)$ such that $s$ is a reflection and preserves the facets of $\P$.  
Then the base graph $W_5$  has a well-positioned and isostatic placement $p^*$ in $(\bR^2,\|\cdot\|_\P)$, described in Lemma \ref{W5Placement},
which is $\bZ_2$-symmetric with respect to $\theta$ and $\tau$.
By Propositions \ref{RigidityPreservation1} - \ref{RigidityPreservation3}, for each subsequent graph $G^k$ in the construction chain there must exist a placement $p_k$ such that $(G^k,p_k)$ is  well-positioned and isostatic  
in $(\bR^2,\|\cdot\|_\P)$ and $\bZ_2$-symmetric with respect to $\theta$ and $\tau$. In particular,  such a placement must exist for $G$.

$(B)$ The proof of the implication $(i)\Rightarrow (ii)$ is analogous to $(A)$. A key difference in proving the converse is that graphs satisfying $(ii)$ may have any number of fixed vertices. To obtain a construction scheme similar to $(A)$,  extend the class of simple graphs in $(ii)$ by allowing two parallel edges between pairs of fixed vertices. The construction scheme includes three different types of $0$-extension: addition of a fixed vertex together with either two parallel edges or two non-parallel edges, and addition of a symmetric pair of vertices of degree two. The construction scheme also includes the $1$-extension, modified $1$-extension and vertex-to-$W_5$ extension. Two additional base graphs are required. The first one is the graph $G_1$ depicted in Fig.~\ref{fig:symisofw}(B), and the other one is the trivial graph $K_1$ consisting of a single vertex and no edge. An additional bridging move is also needed. This move takes two graphs $G_1$ and $G_2$ with $|E(G_i)|=2|V(G_i)|-2$ for $i=1,2$, and creates the graph $G_1\oplus G_2$
which has vertex set the disjoint union of $V(G_1)$ and $V(G_2)$ and edge set $E(G_1)\cup
E(G_2) \cup \{e, f\}$ where each of the edges $e$ and $f$ connects a vertex in $G_1$ with a vertex in $G_2$. 

By a standard argument, $G$ must contain a vertex of degree $2$ or $3$.
If $G$ contains a vertex of degree $2$ then this can be removed by an inverse $0$-extension. If $G$ contains a vertex of degree $3$ with at least two fixed neighbours then this can be removed by an inverse $1$-extension which places parallel edges between two of the fixed neighbours.
In the remaining case a simple counting argument can be applied to show that $G$ contains a contractible copy of $W_5$, or a copy of $G_1$ that can be removed by an inverse bridging move, or  a vertex of degree 3 
which can be removed by an inverse $1$-extension or modified $1$-extension. The existence of geometric placements which preserve isostaticity is established in a similar manner to (A). The notion of an induced framework colouring can be extended by regarding an edge which is not well-positioned as having two distinct framework colours. This convention provides a natural correspondence between induced framework colourings and graphs with two parallel edges between fixed vertices. Note that only intermediate graphs in the construction chain will have parallel edges and so the final framework will be well-positioned.

$(C)$ $(i)\Rightarrow (ii)$ 
By the argument in $(A)$, $G$ is an edge-disjoint union of the monochrome subgraphs $G_{F_1}$ and $G_{F_2}$ and these subgraphs are $\bZ_2$-symmetric with respect to $\theta$. 
By \cite[Corollary~4.4 (ii)]{kit-sch}, either no edge or two edges of $G$ are fixed by the half-turn rotation $s$. 

$(ii)\Rightarrow (i)$
Let $\tau:\bZ_2\to\GL(\bR^2)$  be the representation for which $\tau(s)$ acts as  rotation by $\pi$ about the origin.
Then $s$ is a half-turn rotation which preserves the facets of $\P$. 
If no edge of $G$ is fixed by $s$ then proceed as in the proof of $(A)$.
If two edges of $G$ are fixed by $s$ then let $e=v_1v_2$ and $f=v_3v_4$ be these
fixed edges.
Construct a new graph $\hat{G}$ as follows: Let $V(\hat{G})=V(G)\cup \{w_0\}$ where $w_0\notin V(G)$.
Let $E(\hat{G})=(E(G)\backslash\{e,f\})\cup \{v_1w_0,\,v_2w_0,\,v_3w_0,\,v_4w_0\}$.
Extend the action of $\theta$ on $G$ to $\hat{G}$ by defining $\hat{\theta}:\bZ_2\to\Aut(\hat{G})$ with $\hat{\theta}(s)v=\theta(s)v$ for all $v\in V(G)$ and $\hat{\theta}(s)w_0=w_0$.
Let $G_1$ and $G_2$ be  edge-disjoint spanning trees with union $G$,  both of which are $\bZ_2$-symmetric with respect to $\theta$, and assume without loss of generality that $e\in E(G_1)$.
Define $\hat{G}_1$ and $\hat{G}_2$ by setting $\hat{G}_1=(G_1\backslash\{e\})\cup \{v_1w_0,v_2w_0\}$ and $\hat{G}_2=(G_2\backslash\{f\})\cup \{v_3w_0,v_4w_0\}$.
Then $\hat{G}_1$ and $\hat{G}_2$ are edge-disjoint spanning trees in $\hat{G}$ which are $\bZ_2$-symmetric with respect to $\hat{\theta}$ and $G$ is the union of $\hat{G}_1$ and $\hat{G}_2$.
Note that no edge of $\hat{G}$ is fixed by $s$ and so, by the above argument, there exists $\hat{p}$ such that $(\hat{G},\hat{p})$ is well-positioned and isostatic  in $(\bR^2,\|\cdot\|_\P)$ and $\bZ_2$-symmetric with respect to $\theta$ and $\tau$.
Define $p(v)=\hat{p}(v)$ for all $v\in V(G)$.
Then $(G,p)$  is well-positioned and isostatic in $(\bR^2,\|\cdot\|_\P)$ and $\bZ_2$-symmetric with respect to $\theta$ and $\tau$. 

$(D)$ The method of proof for $(D)$ is a modification of the methods in $(A)$.  See Figure~\ref{fig:2basewheel} for an illustration of a 1-extension and modified 1-extension applied to the base graph $W_5$.
\end{proof}

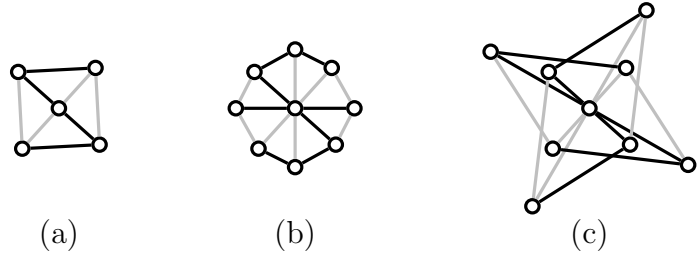
\begin{figure}[htp]
\begin{center}
\begin{tikzpicture}[very thick,scale=0.6]
\tikzstyle{every node}=[circle, draw=black, fill=white, inner sep=0pt, minimum width=5pt];

 \node (v1) at (0:0cm) {};
 \node (v2) at (48:1.2cm) {};
 \node (v3) at (138:1.2cm) {};
 \node (v4) at (228:1.2cm) {};
 \node (v5) at (318:1.2cm) {};

\draw[lightgray] (v1)  --  (v2);
\draw (v1)  --  (v3);
\draw[lightgray] (v1)  --  (v4);
\draw (v1)  --  (v5);

\draw (v2)  --  (v3);
\draw[lightgray] (v3)  --  (v4);
\draw (v5)  --  (v4);
\draw[lightgray] (v5)  --  (v2);

\node[rectangle, draw=white,fill=white](l1) at (0,-2.8) {(a)};
\end{tikzpicture} 
\hspace{1.3cm}
\begin{tikzpicture}[very thick,scale=0.6]

\tikzstyle{every node}=[circle, draw=black, fill=white, inner sep=0pt, minimum width=5pt];

 \node (v1) at (0:0cm) {};

 \node (v2) at (48:1.2cm) {};
 \node (v3) at (138:1.2cm) {};
 \node (v4) at (228:1.2cm) {};
 \node (v5) at (318:1.2cm) {};

 \node (v22) at (90:1.3cm) {};
 \node (v33) at (180:1.3cm) {};
 \node (v44) at (270:1.3cm) {};
 \node (v55) at (360:1.3cm) {};

\draw[lightgray] (v1)  --  (v2);
\draw (v1)  --  (v3);
\draw[lightgray] (v1)  --  (v4);
\draw (v1)  --  (v5);

\draw[lightgray] (v1)  --  (v22);
\draw (v1)  --  (v33);
\draw[lightgray] (v1)  --  (v44);
\draw (v1)  --  (v55);

\draw[lightgray] (v3)  --  (v33);
\draw (v44)  --  (v4);
\draw[lightgray] (v33)  --  (v4);
\draw (v44)  --  (v5);

\draw[lightgray] (v55)  --  (v5);
\draw (v22)  --  (v2);
\draw[lightgray] (v55)  --  (v2);
\draw (v22)  --  (v3);
\node[rectangle, draw=white,fill=white](l1) at (0,-2.8) {(b)};
\end{tikzpicture} 
\hspace{1.3cm}
\begin{tikzpicture}[very thick,scale=0.6]

\tikzstyle{every node}=[circle, draw=black, fill=white, inner sep=0pt, minimum width=5pt];

 \node (v1) at (0:0cm) {};

 \node (v2) at (48:1.2cm) {};
 \node (v3) at (138:1.2cm) {};
 \node (v4) at (228:1.2cm) {};
 \node (v5) at (318:1.2cm) {};

 \node (v22) at (60:2.5cm) {};
 \node (v33) at (150:2.5cm) {};
 \node (v44) at (240:2.5cm) {};
 \node (v55) at (330:2.5cm) {};

\draw[lightgray] (v1)  --  (v2);
\draw (v1)  --  (v3);
\draw[lightgray] (v1)  --  (v4);
\draw (v1)  --  (v5);

\draw[lightgray] (v1)  --  (v22);
\draw (v1)  --  (v33);
\draw[lightgray] (v1)  --  (v44);
\draw (v1)  --  (v55);

\draw (v2)  --  (v33);
\draw[lightgray] (v3)  --  (v44);
\draw (v4)  --  (v55);
\draw[lightgray] (v5)  --  (v22);

\draw (v22)  --  (v3);
\draw[lightgray] (v33)  --  (v4);
\draw (v44)  --  (v5);
\draw[lightgray] (v55)  --  (v2);
\node[rectangle, draw=white,fill=white](l1) at (0,-2.8) {(c)};

\end{tikzpicture} 

     \end{center}
\vspace{-0.3cm}
     \caption{(a) An isostatic placement of the base graph $W_5$ for $\C_4$.  (b),(c) $\C_4$-symmetric frameworks obtained by applying a 1-extension and a modified 1-extension respectively to the base graph.	The edges of the `anti-symmetric' monochrome spanning trees are shown in  gray and black.}
\label{fig:2basewheel}
\end{figure}

Note that, analogously to the Euclidean case \cite{schtan}, an infinitesimally rigid $\bZ_2$- or $\bZ_4$-symmetric framework in $(\bR^2,\|\cdot\|_\P)$ does not necessarily have a spanning isostatic subframework with the same symmetry. For example, it is easy to construct infinitesimally rigid frameworks with $\mathcal{C}_s$-symmetry in $(\bR^2,\|\cdot\|_\P)$ (where the reflection preserves the facets of $\P$) which have no vertex or more than one vertex that is fixed by the reflection (see Figure~\ref{infrigex}). Thus, for symmetric frameworks in $(\bR^2,\|\cdot\|_\P)$,   infinitesimal rigidity can in general not be characterised  in terms of symmetric isostatic subframeworks. Therefore, a more advanced approach (along the lines of \cite{schtan}, for example) is needed to analyse symmetric over-braced frameworks for infinitesimal rigidity. 
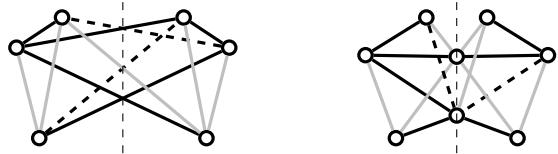
\begin{figure}[htp]
\begin{center}
\begin{tikzpicture}[very thick,scale=1]
\tikzstyle{every node}=[circle, draw=black, fill=white, inner sep=0pt, minimum width=5pt];
  \path (-1.4,0.3) node (p1)  {} ;
    \path (1.4,0.3) node (p2)  {} ;
    \path (-0.8,0.7) node (p3)  {} ;
     \path (0.8,0.7) node (p4)  {} ;
     \path (-1.1,-0.9) node (p5)  {} ;
     \path (1.1,-0.9) node (p6)  {} ;
\draw (p1)  --  (p3);
        \draw [lightgray] (p1)  --  (p5);
        \draw (p1)  --  (p4);
        \draw (p6)  --  (p1);

        \draw (p2)  --  (p4);
        \draw [lightgray](p2)  --  (p6);
        \draw [dashed](p2)  --  (p3);
        \draw (p5)  --  (p2);

        \draw[lightgray] (p5)  --  (p3);
        \draw [lightgray](p4)  --  (p6);
        
          \draw [dashed](p5)  --  (p4);
        \draw [lightgray](p3)  --  (p6);
        \draw[dashed,thin] (0,-1.1)  --  (0,1);
        
\end{tikzpicture} 
\hspace{1.3cm}
\begin{tikzpicture}[very thick,scale=1]
\tikzstyle{every node}=[circle, draw=black, fill=white, inner sep=0pt, minimum width=5pt];
  \path (-1.2,0.2) node (p1)  {} ;
    \path (1.2,0.2) node (p2)  {} ;
    \path (-0.4,0.7) node (p3)  {} ;
     \path (0.4,0.7) node (p4)  {} ;
     \path (-0.8,-0.9) node (p5)  {} ;
     \path (0.8,-0.9) node (p6)  {} ;
     \path (0,0.18) node (p7)  {} ;
     \path (0,-0.6) node (p8)  {} ;
     
\draw (p1)  --  (p3);
        \draw [lightgray] (p1)  --  (p5);
        \draw (p1)  --  (p7);
        \draw (p1)  --  (p8);
        
        \draw (p2)  --  (p4);
        \draw  [lightgray](p2)  --  (p6);
        \draw (p2)  --  (p7);
        \draw [dashed](p2)  --  (p8);
        
         \draw [lightgray](p3)  --  (p7);
        \draw [lightgray](p4)  --  (p7);
        \draw [lightgray](p5)  --  (p7);
        \draw [lightgray](p6)  --  (p7);
        \draw [dashed](p3)  --  (p8);
        \draw  [lightgray](p4)  --  (p8);
        \draw (p5)  --  (p8);
        \draw (p6)  --  (p8);
        \draw[dashed,thin] (0,-1.1)  --  (0,1);\end{tikzpicture} 
     \end{center}
     \caption{Examples of $\mathcal{C}_s$-symmetric frameworks in $(\bR^2,\|\cdot\|_\infty)$ which are infinitesimally rigid with $|V_s|=0$ and $|V_s|=2$, respectively, but do not contain a spanning isostatic subframework with the same symmetry. The edges of the monochrome spanning trees are shown in gray and black; the dashed edges do not belong to any of the two trees.}
\label{infrigex}
\end{figure}




\bibliographystyle{abbrv}
\def\lfhook#1{\setbox0=\hbox{#1}{\ooalign{\hidewidth
  \lower1.5ex\hbox{'}\hidewidth\crcr\unhbox0}}}
	\nocite{*}

\end{document}